\DeclareMathAlphabet{\mathpzc}{OT1}{pzc}{m}{it}
\newtheorem{thm}[equation]{Theorem}
\newtheorem{theorem}[equation]{Theorem}
\newtheorem{Ex}[equation]{Example}
\newtheorem{Rmk}[equation]{Remark}\newtheorem{remark}[equation]{Remark}
\newtheorem{prop}[equation]{Proposition}
\newtheorem{cor}[equation]{Corollary}
\newtheorem{lem}[equation]{Lemma}
\newtheorem{Def}[equation]{Definition}
\numberwithin{equation}{section}
\numberwithin{equation}{section}
\newcommand{\be}{begin{equation}}
\newcommand{\bH}{\mathbb H}
\newcommand{\bb}{\mathbb}
\newcommand{\e}{\epsilon}
\renewcommand{\c}{\mathbb{C}}
\newcommand{\br}{\mathbb{R}}
\newcommand{{\grinv}}{{\Cal G}^{-r}}
\newcommand{\ba}{\backslash}
\newcommand{\G}{\Gamma}
\newcommand{\Cal}{\mathcal}
\newcommand{\bp}{\begin{pmatrix}}
\newcommand{\ep}{\end{pmatrix}}
\renewcommand{\be}{\begin{equation}}
\newcommand{\ee}{\end{equation}}
\def\vecv{{\text{\boldmath$v$}}}
\def\vect{{t}}
\def\vecz{{z}}
\def\vecw{{w}}
\renewcommand{\bp}{{\rm bp}}
\newcommand{\CC}{\mathbb C}
\renewcommand{\L}{\Cal L}
\newcommand{\Vol}{\op{Vol}}
\newcommand{\PSL}{\op{PSL}}
\newcommand{\PS}{\op{PS}}
\newcommand{\muPS}{\mu^{\PS}}
\newcommand{\norm}[1]{\lVert #1 \rVert}
\newcommand{\op}{\operatorname}
\newcommand{\BR}{\operatorname{BR}}
\newcommand{\cl}[1]{\overline{#1}}
\newcommand{\pu}{\Phi_\rho(u)}
\renewcommand{\muPS}{\mu^{\PS}}
\renewcommand{\setminus}{-}
\newcommand{\Ad}{\operatorname{Ad}}
\newcommand{\Ga}{\Gamma}
\def\RR{\mathbb{R}}
\def\om{\omega}
\newcommand{\ga}{\gamma}
\newcommand{\F}{\mathcal F}
\newcommand{\La}{\Lambda}
\renewcommand{\i}{\op{i}}
\def\scrO{{\mathcal O}}
\def\scrP{{\mathcal P}}
\def\e{\mathrm{e}}
\def\i{\mathrm{i}}
\def\PSL{\operatorname{PSL}}
\newcommand{\smatr}[4]{\left( \begin{smallmatrix} #1 & #2 \\ #3 & #4 \end{smallmatrix} \right) }
\newcommand{\Zar}{{{Zariski-dense }}}
\newcommand{\Conv}{{convex-cocompact }}
\newcommand{\GaG}{\Gamma_\rho\backslash G}
\newcommand{\sfrac}[2]{{\textstyle \frac {#1}{#2}}}
\newcommand{\fa}{\mathfrak{a}}
\newcommand{\cal}{\mathcal}
\renewcommand{\e}{\varepsilon}
\renewcommand{\epsilon}{\e}
\newcommand{\inte}{\op{int}}
\newcommand{\mBR}{m_\psi^{\mathrm{BR}}}
\newcommand{\Gr}{\Gamma_\rho}
\begin{document}

\title[Torus counting]{Torus counting and self-joinings of Kleinian groups}

\author{Sam Edwards, Minju Lee, and Hee Oh}
\address{Department of Mathematical Sciences, Durham University, Lower Mountjoy, DH1 3LE Durham, United Kingdom}
\address{Mathematics department, University of Chicago, Chicago, IL 60637, USA}
\address{Mathematics department, Yale university, New Haven, CT 06520, USA and Korea Institute for Advanced Study, Seoul, Korea}
\address{}
\thanks{Edwards's work was supported by the Additional Funding Programme for Mathematical Sciences, delivered by EPSRC (EP/V521917/1) and the Heilbronn Institute for Mathematical Research. Oh is partially supported by the NSF grant No. DMS-1900101.}
\begin{abstract}
For any $d\ge 1$, we obtain counting and equidistribution results for tori with small volume for a class of $d$-dimensional torus packings, invariant under a self-joining $\Ga_\rho<\prod_{i=1}^d\PSL_2(\c)$
of a Kleinian group $\Ga$
formed by a $d$-tuple of \Conv representations $\rho=(\rho_1, \cdots, \rho_d)$. 
More precisely, if $\cal P$ is a $\Ga_\rho$-admissible $d$-dimensional torus packing, then
for any bounded subset $E\subset \c^d$  with $\partial E$ contained in a proper real algebraic subvariety, we have
$$\lim_{s\to 0} { s^{\delta_{L^1}({\rho}) }} \cdot \#\{T\in \cal P: \Vol (T)> s,\,  T\cap E\ne \emptyset  \}=
c_{\cal P}\cdot \omega_{\rho} (E\cap \Lambda_\rho).$$
Here $0<\delta_{L^1}(\rho)\le  2/\sqrt d$ is the critical exponent of $\Ga_\rho$ with respect to the $L^1$-metric on the product $\prod_{i=1}^d \bH^3$, $\Lambda_\rho\subset (\c\cup\{\infty\})^d$ is the limit set of $\Ga_\rho$, and 
$\om_{\rho}$
is a locally finite Borel measure  on $\c^d\cap \Lambda_\rho$
which can be explicitly described.
The class of admissible torus packings we consider arises naturally from the Teichm\"{u}ller theory of Kleinian groups. Our work extends previous results of Oh-Shah \cite{OS} on circle packings (i.e.\ one-dimensional torus packings) to $d$-torus packings.
\end{abstract}

\maketitle
\tableofcontents
\section{Introduction}\label{sec.int}

In this paper, we obtain counting and equidistribution results for a certain class of $d$-dimensional torus packings invariant under self-joinings of Kleinian groups for any $d\ge 1$. One-dimensional torus packings are precisely circle packings. To motivate the formulation of our main results, we begin by reviewing counting results for circle packings that are invariant under Kleinian groups (\cite{KO}, \cite{Oh}, \cite{OS}, \cite{OS2}, \cite{PP}, etc).

\subsection*{Circle counting} 
A circle packing  in the complex plane $\c$ is simply a 
 nonempty family of circles in $\c$, for which we allow intersections among themselves. 
In the whole paper, lines are also considered as circles of infinite radii.
Let $\G<\PSL_2(\c)=\op{Isom}^+(\bH^3)$ be a 
\Zar\Conv discrete subgroup.
We call  a circle packing $\cal P$ $\G$-admissible if
\begin{itemize}
    \item $\cal P$ consists of finitely many $\Ga$-orbits of circles;
    \item $\cal P$ is locally finite, in the sense that no infinite sequence of circles in $\cal P$ converges to a circle.
\end{itemize}
 We denote by $0<\delta_\Ga\le 2$  the critical exponent of $\Ga$ i.e.\ the abscissa of convergence for the Poincare series $\mathsf P(s):=\sum_{g\in \Ga} e^{-s \, \mathsf{d}_{\bH^3}(g{p}, {p})}$ where ${p}\in \bH^3$ is any point and
$\mathsf d_{\bH^3}$ is the hyperbolic metric so that $(\bH^3, \mathsf d_{\bH^3})$ has constant curvature $-1$. 
 The extended complex plane $\hat \c=\c\cup\{\infty\}$ can be regarded as the geometric boundary of $\bH^3$. The limit set of $\Ga$ is the set of all accumulation points of {the orbit $\Ga (z)$ of} $z\in \hat \c$; we denote it by $\La_\Ga\subset \hat \c$.
\begin{theorem}\cite{OS} \label{cc} For any $\G$-admissible circle packing $\mathcal{P}$, there exists a constant $c_{\mathcal{P}}>0$ such that for any bounded {measurable} subset $E\subset \c$ whose boundary is contained in a proper real
algebraic subvariety of $\c$,

$$
\lim_{s\to 0}  s^{\delta_\Ga} \#\{C\in \cal P: \op{radius}( C) \ge s,\;  C\cap E\ne \emptyset \} = c_{\cal P}  \; \omega_\Ga (E\cap \La_\Ga); 
$$
here  $\omega_\Ga$ is the $\delta_{\Ga}$-dimensional Hausdorff measure on $\mathbb C\cap \La_\Gamma $ with respect to the Euclidean metric on $\mathbb C$.
 \end{theorem}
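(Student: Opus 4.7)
Since $\mathcal P$ is a finite union of $\G$-orbits of circles, I would reduce to treating a single orbit $\mathcal P=\G\cdot C_0$. The plan is to translate the counting into an equidistribution statement for the discrete set of orbit representatives inside $H\bs G$ for $G=\PSL_2(\c)$ and $H=\Stab_G(C_0)$, and then apply the mixing of the geodesic flow with respect to the Bowen--Margulis--Sullivan measure. The subgroup $H$ is a conjugate of $\PSL_2(\R)$, the map $Hg\mapsto g^{-1}C_0$ identifies $H\bs G$ with the space of all circles in $\hat\c$, and under this identification the orbit $\G\cdot C_0$ corresponds to the discrete set $(\G\cap H)\bs \G\subset H\bs G$.

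Pick a decomposition $G=HAK$ with $A=\{a_t\}$ the diagonal one-parameter subgroup and $K=\PSU(2)$; the radius of $\g^{-1}C_0$ is comparable to $e^{-T}$ when the $A$-component of $\g$ is $a_T$, while the $K$-component records the location of the circle in $\c$. Writing $\psi_E$ for (a smooth approximation of) the indicator of those $k\in K$ with $k^{-1}C_0\cap E\ne\emptyset$, the desired count for $s=e^{-T}$ is, up to boundary errors,
\[
N_T(\G,\psi_E)\;:=\;\sum_{\g\in(\G\cap H)\bs\G}\mathbf 1_{A_{[0,T]}}(a(\g))\,\psi_E(k(\g)),
\]
and one aims to prove $e^{-\delta_\G T}N_T(\G,\psi_E)\to c_{\mathcal P}\cdot\omega_\G(E\cap\La_\G)$ as $T\to\infty$.

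By a thickening argument of Eskin--McMullen type adapted to the BMS/BR setting, this asymptotic follows from the equidistribution of the expanding translates $Ha_T$ inside $\G\bs G$ with respect to the Burger--Roblin measure $m^{\BR}$, normalized by $m^{\BMS}(\G\bs G)$. The required equidistribution is obtained from mixing of the geodesic flow $\{a_t\}$ on $(\G\bs G,m^{\BMS})$, which is known for Zariski dense convex cocompact $\G$ (Babillot, Flaminio--Spatzier, Roblin). Unfolding the limit measure along $H$-orbits produces the conditional Patterson--Sullivan measure on $\La_\G$, which by Sullivan's theorem for convex cocompact Kleinian groups coincides up to a constant with the $\delta_\G$-dimensional Hausdorff measure $\omega_\G$ on $\La_\G$; this determines the constant $c_{\mathcal P}$ and identifies the limit as $c_{\mathcal P}\cdot\omega_\G(E\cap\La_\G)$.

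The main obstacle is the boundary regularity step needed to upgrade the smooth equidistribution to the sharp indicator $\mathbf 1_E$: one must show $\mu^{\PS}(\partial E\cap\La_\G)=0$. This uses the hypothesis that $\partial E$ lies in a proper real algebraic subvariety of $\c$, combined with the \emph{friendliness} of the Patterson--Sullivan measure of a convex cocompact Kleinian group in the sense of Kleinbock--Lindenstrauss--Weiss, which gives zero $\mu^{\PS}$-mass to any proper real algebraic subvariety of $\c$. Verifying this algebraic vanishing and then sandwiching the sharp count between smooth approximations with matching leading asymptotics is the technically most delicate part of the argument.
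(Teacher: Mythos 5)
Your proposal follows essentially the same scheme as the proof this theorem refers to (Oh--Shah), which is also the skeleton the present paper generalizes to torus packings: reduce to a single orbit $\G\cdot C_0$, identify circles with points of $H\bs G$ for $H=\Stab_G(C_0)$, convert the count into an orbital count of $(\G\cap H)\bs\G$ in sets defined through an $HA^+K$-type decomposition, deduce the asymptotics from mixing of the $A$-flow with respect to the $\BMS$ measure via equidistribution of expanding $H$-translates against the $\BR$ measure, unfold along $H$ to produce the $\PS$/skinning measure, and identify the limit measure with the $\delta_\G$-dimensional Hausdorff measure by uniqueness of the $\delta_\G$-conformal density for convex cocompact groups. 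The one place where you genuinely diverge is the boundary step $\muPS(\partial E\cap\La_\G)=0$. The paper does this (Lemma \ref{lem.F} via Theorem \ref{co1}) by a soft algebraic argument: Zariski density plus $\G$-ergodicity of $\nu\times\nu$ on $\F\times\F$ (Proposition \ref{prop.Z}) forces any proper real algebraic subvariety charged by $\nu$ to be $\G$-invariant, contradicting irreducibility and minimality of dimension; this argument costs nothing beyond Zariski density and extends verbatim to the higher-rank Anosov setting used later. You instead appeal to friendliness of the $\PS$ measure in the sense of Kleinbock--Lindenstrauss--Weiss. As stated this overreaches: friendliness (Federer plus decay near \emph{affine hyperplanes}) directly yields nullity of lines in $\c=\RR^2$, not of arbitrary proper real algebraic subvarieties, which here are algebraic curves. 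To make your route work you would need the absolutely decaying property (known for Patterson measures of Zariski dense convex cocompact Kleinian groups by Stratmann--Urba\'nski) together with an extra tangent-line/density-point argument to pass from hyperplane neighborhoods to $C^1$ arcs, and then handle the finitely many singular points of the curve. That is fixable, and it would even buy you quantitative decay of $\muPS$ near $\partial E$ (useful for effective versions), but it is a genuinely different, more measure-geometric key lemma than the ergodicity/irreducibility argument the paper uses, and in the form you wrote it the deduction ``friendly $\Rightarrow$ null on algebraic subvarieties'' is not yet justified.
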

 
This theorem holds for a more general class of circle packings invariant by geometrically finite Kleinian groups, which includes the famous Apollonian circle packings for which the relevant counting result was first obtained in \cite{KO} (see \cite{OS} for more details and examples).

\subsection*{Torus counting} The main goal of this paper is
to prove  a higher dimensional analogue of Theorem \ref{cc}. Let $d\ge 1$.
{By a} torus in $\c^d$   
{ we mean }
a Cartesian product of $d$-number of circles $C_1,\cdots, C_d\subset \c$.
{
However, it will be convenient to consider it as a $d$-tuple of circles}
\begin{equation}\label{eq.DFT}
T= 
(C_1, \cdots, C_d)        
\end{equation}
{
rather than a subset $C_1\times \cdots \times C_d\subset\bb C^d$.
}
A $d$-dimensional torus packing in $\c^d$ is simply a  { nonempty family} of $d$-tori in $\c^d$.

The volume of $T$ is given by
$$\Vol (T)= \prod_{i=1}^d 2\pi  {\;} \text{radius } C_i.$$

Figure 1 shows some image of a $2$-torus packing. Although the torus $T=C_1\times C_2$ in Fig.\ 1 {\it appears} to be in $\br^3$, it should be understood as a subset of $\br^4$, representing the Cartesian product of the boundary circles of two discs.
\begin{figure}[h]
\centering \captionsetup{justification=centering}
      \includegraphics[totalheight=5cm]{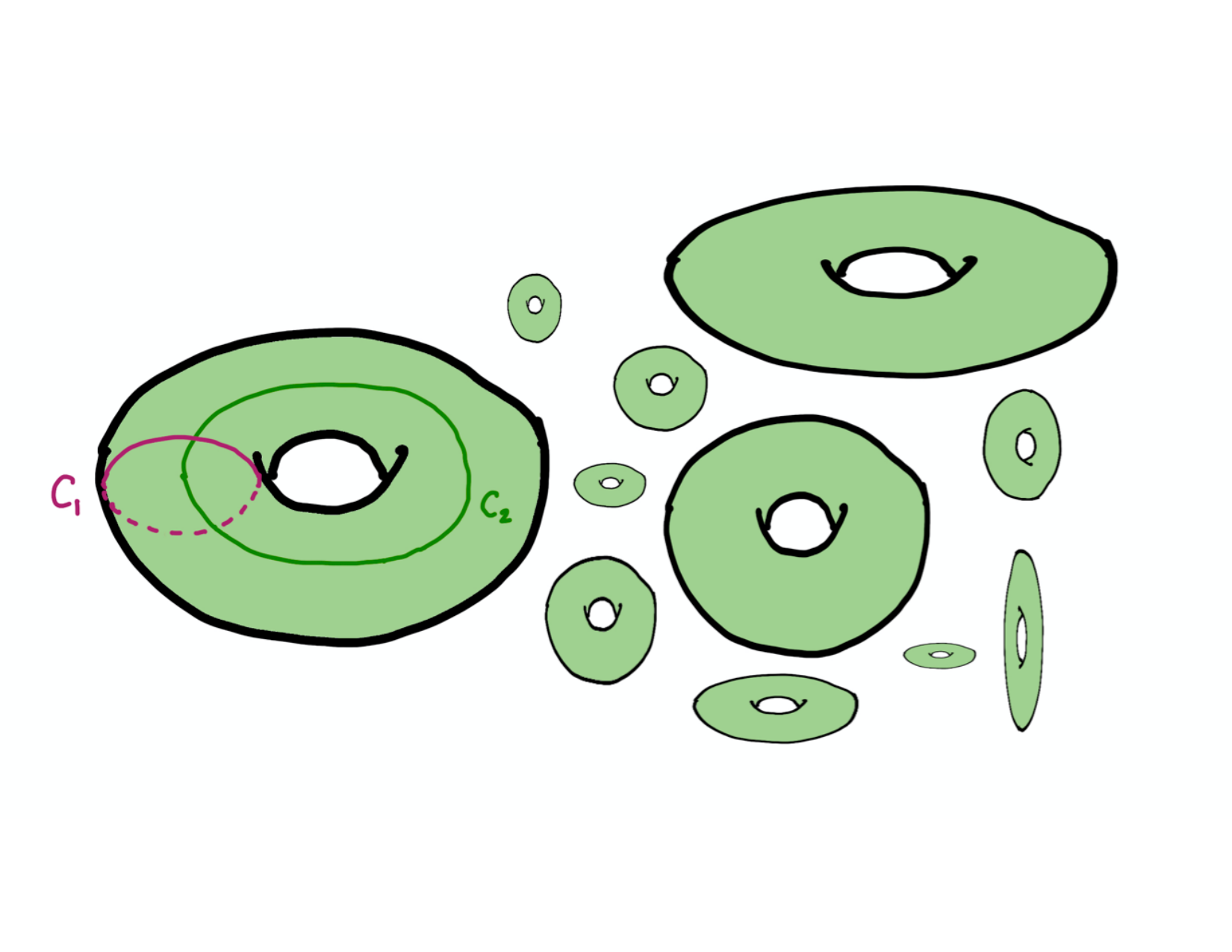}
\caption{A torus packing}
\label{TP}
\end{figure}

We are interested in understanding the asymptotic counting and distribution of tori with small volumes in a torus packing that is invariant under a self-joining of a \Conv Kleinian group.

Let $\Ga<\PSL_2(\c)$ be a \Conv discrete subgroup and
$\rho=(\rho_1=\text{id}, \rho_2, \cdots, \rho_d)$ be a $d$-tuple of faithful \Conv representations of $\Ga$ into $\PSL_2(\c)$. Let $G=\prod_{i=1}^d \PSL_2(\c)$.
The { {\it self-joining}} of $\Gamma$ via $\rho$ is defined as the following discrete subgroup of $G$:
$$\Gamma_\rho 
=\{\big(\rho_1(g), \cdots , \rho_d(g)\big): g\in \Gamma\}.$$
Throughout the paper we will always assume that $\Ga_\rho$ is \Zar in $G$.
Each $\rho_i$ induces a unique equivariant homeomorphism  $f_i:\La_{\Ga}\to \La_{\rho_i(\Ga)}$, which is called the $\rho_i$-boundary map \cite{Tu}.
{In this paper, we define }
the limit set of $\Ga_\rho$  { by} 
$$\La_\rho=\{(f_1(\xi), \cdots, f_d(\xi))\in \hat \c ^d:\xi\in \La_\Ga\}.$$

 We call a torus $T=(C_1, \cdots, C_d)$ $\Ga_\rho$-admissible
if for each $1\le i\le d$,
\begin{itemize}
    \item
$\rho_i(\Gamma) C_i$ is a locally finite circle packing;
\item $f_i(C_1\cap \La_\Ga) =C_i \cap \La_{\rho_i(\Ga)} .$
\end{itemize}

The second condition is equivalent to$$T\cap \La_\rho=\{(\xi_1,\cdots, \xi_d)\in \La_\rho: \xi_1\in C_1\cap \La_\Ga\},$$ that is,  the circular slice $C_1\cap \La_\Ga$ completely determines
the toric slice $T\cap \La_\rho$.

\begin{Def}\label{in-ad}  A torus packing $\cal P$ is called $\Ga_\rho$-admissible if
\begin{itemize}
    \item 
$\cal P$ consists of finitely many $\Ga_\rho$-orbits of $\Ga_\rho$-admissible tori;\item 
$\cal P$ is locally finite in the sense that no infinite sequence of tori in $\cal P$ converges to a torus.
\end{itemize}
\end{Def}

\begin{Rmk} \rm We remark that when $\#(C_1\cap \La_\Ga) \ge 3$,
the locally finiteness hypotheses in the above definition can be reduced to the local-finiteness of the circle packing $\Ga C_1$ (see Prop. \ref{ccc}).
\end{Rmk}

We denote by $\delta_{L^1}({\rho})$ 
the abscissa of convergence of the series 
$$
s\mapsto \mathsf P_{L^1}(s):=\sum_{g\in \Ga} e^{-s \sum_{i=1}^d \mathsf d_{\bH^3}(\rho_i(g)  {p}, {p})}
$$ for ${p}\in \bH^3$, which is the critical exponent of $\Ga_\rho$ with respect to the $L^1$ product metric
on $\prod_{i=1}^d (\bH^3, \mathsf d_{\bH^3})$.

We first state the following special case of the main result of this paper{.} \begin{thm}\label{m1}\label{THM2}  Let $\cal P$ be a $\Ga_\rho$-admissible torus packing.
There exists a constant $c_{\cal P}>0$  such that for any bounded {measurable} subset $E\subset \c^d$ with boundary contained in a proper real algebraic subvariety, we have
$$\lim_{s\to 0} { s^{\delta_{L^1}({\rho}) }} {\;} \#\{T\in \cal P: \Vol (T)> s,\,  T\cap E\ne \emptyset  \}=
c_{\cal P}{\;} \omega_{\Ga_\rho} (E\cap \Lambda_\rho),$$
where $\om_{\Ga_\rho}$ is a locally finite Borel measure  on $\c^d\cap \Lambda_\rho$ which can be explicitly described. In particular, if $\cal P$ is bounded, then $$\lim_{s\to 0} s^{\delta_{L^1} (\rho)}{\;}
{\#\{T\in \cal P:  \Vol(T) >s\}}=c_{\cal P}{\;|\om_{\Ga_\rho}|}  .$$ \end{thm}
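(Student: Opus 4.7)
The plan is to reduce the torus counting to an orbit counting problem for $\Ga_\rho$ acting on $G=\prod_{i=1}^d\PSL_2(\c)$, and then to solve the latter via mixing of a one-parameter flow on $\Ga_\rho\backslash G$ along the $L^1$-diagonal direction.

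First, by $\Ga_\rho$-admissibility, $\cal P$ decomposes into finitely many $\Ga_\rho$-orbits, so it suffices to treat a single orbit $\Ga_\rho\cdot T_0$ for a fixed representative torus $T_0=(C_1^0,\dots,C_d^0)$. Let $H=\prod_{i=1}^d H_i\le G$ be the stabilizer of $T_0$, where each $H_i\le\PSL_2(\c)$ is the stabilizer of the circle $C_i^0$; tori in the orbit are then in finite-to-one correspondence with cosets in $(\Ga_\rho\cap H)\backslash \Ga_\rho$. For any $\ga=(\rho_1(g),\dots,\rho_d(g))\in \Ga_\rho$, the radius of the circle $\rho_i(g)C_i^0$ is comparable, up to a constant depending only on $C_i^0$ and $o\in\bH^3$, to $e^{-\mathsf d_{\bH^3}(\rho_i(g)o,o)}$ via the conformal action of $\PSL_2(\c)$ on $\hat \c$. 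Consequently
\begin{equation*}
\Vol(\ga T_0)\asymp \exp\!\Bigl(-\sum_{i=1}^d\mathsf d_{\bH^3}(\rho_i(g)o,o)\Bigr),
\end{equation*}
so the constraint $\Vol(\ga T_0)>s$ is essentially a sub-level set for the total $L^1$-displacement, and the expected growth exponent is precisely the abscissa of convergence $\delta_{L^1}(\rho)$ of $\mathsf P_{L^1}$.

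Second, I would translate the count into a dynamical problem on $\Ga_\rho\backslash G$. Let $a_t=(\diag(e^{t/2},e^{-t/2}),\dots,\diag(e^{t/2},e^{-t/2}))\in G$; translation by $a_t$ uniformly shifts each $\mathsf d_{\bH^3}(\rho_i(g)o,o)$ by $t$ and contracts every circle radius by the factor $e^{-t}$. A standard transversal/thickening argument in the spirit of Duke--Rudnick--Sarnak and Oh--Shah rewrites the count as the pairing of a function on $\Ga_\rho\backslash G$ concentrated near $a_{-\log s}$ against the $H$-orbit measure on the closed orbit $(\Ga_\rho\cap H)\backslash H\hookrightarrow \Ga_\rho\backslash G$. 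The $s\to 0$ asymptotic then reduces to equidistribution, as $t\to\infty$, of the $\{a_t\}$-translates of this closed $H$-orbit. The core analytic input is mixing of $\{a_t\}$ on $\Ga_\rho\backslash G$ with respect to the Bowen--Margulis--Sullivan measure $m^{\BMS}_\rho$ built from the $(\Ga_\rho,\delta_{L^1}(\rho))$-conformal density on $\La_\rho$ associated to the $L^1$-linear functional $(v_1,\dots,v_d)\mapsto v_1+\cdots+v_d$. Together with a wavefront / product-structure argument in the style of Eskin--McMullen, mixing yields the equidistribution, identifies $\om_\rho$ on $\c^d\cap\La_\rho$ as the boundary/leafwise projection of $m^{\BMS}_\rho$, and expresses $c_{\cal P}$ as a product of the total BMS mass, the Haar mass of a suitable compact transversal in $H$, and orbit-theoretic factors attached to the finitely many orbit representatives $T^{(j)}$.

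The main obstacle is twofold. Because $\Ga_\rho$ is a higher-rank discrete subgroup of $G$, one must work with the conformal density and the BMS measure associated to the $L^1$-functional, which is an interior direction in the positive product Weyl chamber; this requires establishing its existence, uniqueness, positivity and finiteness of mass, and then proving mixing of the associated $\{a_t\}$-flow. This is the technical heart of the paper and rests on the machinery recently developed for self-joinings of convex cocompact Kleinian groups, which must be adapted so that the relevant leafwise/Burger--Roblin measure governs the transversal direction to $H$. Secondly, the hypothesis that $\partial E$ lies in a proper real algebraic subvariety is handled by proving that $\om_\rho$ assigns measure zero to every such set --- a \emph{friendliness} statement for the conformal density --- which, combined with an Eskin--Mozes--Shah-style approximation of $\mathbf{1}_E$ by smooth bump functions, allows us to bootstrap from sets with smooth boundary to the full algebraic class in the theorem.
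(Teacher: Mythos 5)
Your overall skeleton (reduce to one orbit $\Ga_\rho T_0$ with stabilizer $H$, convert to an orbital count in $H\backslash G$, use admissibility for properness/compactness of the relevant $H$-data, and get the asymptotics from equidistribution of translates of the closed $H$-orbit, with $\om_\rho$ a leafwise/boundary measure) matches the paper's strategy, but the core analytic step is misidentified in a way that constitutes a genuine gap. You propose to use mixing of the \emph{single} one-parameter diagonal flow $a_t=(\diag(e^{t/2},e^{-t/2}),\dots)$ with respect to a finite BMS measure attached to the $L^1$-functional, in rank-one Eskin--McMullen style. In this setting $G$ has rank $d$ and no such statement is available: what holds for the Anosov group $\Ga_\rho$ is \emph{local} mixing along rays $s\mapsto \exp(su_\psi+\sqrt s\,w)$, $w\in\ker\psi$, with polynomial correction $s^{(d-1)/2}$, decay weight $e^{(2\sigma-\psi)(\cdot)}$ and a Gaussian-type factor $e^{-I(w)}$, and the limit is a product of Burger--Roblin measures (the paper's Theorem \ref{m11}, from \cite{ELO2} and \cite{CS}). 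Moreover the counting region $\{v\in\fa^+:\sigma(v)<R\}$ is a full-dimensional, non-strictly-convex slab in the Weyl chamber, so the count is obtained by \emph{integrating} these local mixing asymptotics over all directions $w\in\ker\psi$ (Theorem \ref{thm3} and Lemma \ref{lem.ab}); to interchange the $R\to\infty$ limit with the $w$-integral one needs the uniform-in-$w$ bounds of Theorem \ref{m11}/Proposition \ref{prop.mixH2} and dominated convergence. This is exactly the new difficulty the paper isolates (Remark \ref{Euc}(3)): the sublevel sets of the linear form are not strictly convex, unlike the Euclidean-norm case. A single-direction mixing argument would at best capture one ray's contribution and cannot produce the correct constant $c_{\cal P}$ (which carries the factor $\int_{\ker\psi}e^{-I(w)}dw$), nor does the flow-plus-wavefront reduction you sketch address why the translates in all directions inside the limit cone, and the transition region near $\partial\fa^+$ and outside the cone $\cal D$, are controlled (Proposition \ref{lem.supp}, Lemmas \ref{HKA}--\ref{lem.XE}).

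Two further points. Your reduction step asserts $\Vol(\ga T_0)\asymp\exp(-\sum_i\mathsf d(\rho_i(g)o,o))$ with constants independent of $g$; as stated this is false (the left side depends only on the coset in $(\Ga_\rho\cap H)\backslash\Ga_\rho$ while the right side does not --- e.g.\ elements of $\Ga\cap H$ leave the torus, hence its volume, unchanged while their displacement is unbounded). The correct bookkeeping is through the length vector $\vecv(T)$ and the generalized Cartan decomposition $G=HA^+K$, i.e.\ the sets $\tilde B_\psi(E,R)=H\backslash HKA^+_{\psi,R}N_{-E}$ and Proposition \ref{BETcount1}, and one must also rescale $\sigma$ to the $\Ga_\rho$-critical form (Lemma \ref{lem.H}) so that the exponent $\delta_{L^1}(\rho)=\delta_\sigma$ appears. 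Finally, for $\om_\psi(\partial E)=0$ you invoke an unproved ``friendliness'' property of the higher-rank PS measure; the paper instead proves that any $(\Ga,\psi)$-PS measure with $\psi\circ\i=\psi$ is null on proper real algebraic subvarieties via ergodicity of $\nu\times\nu$ for the diagonal action together with irreducibility of varieties (Theorem \ref{co1}), so you should either supply a friendliness argument or route through an ergodicity statement of this kind.
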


\begin{Rmk} \rm \begin{enumerate}
    \item 
Since $\delta_{L^1}(\rho)$ is bounded above by the
usual critical exponent $\delta_{\Ga_\rho}$ of $\Ga_\rho$ with respect to the Riemannian metric (which equals the $L^2$ product metric) on $\prod_{i=1}^d \bH^3$, we have 
$$0<\delta_{L^1(\rho)}\le  \delta_{\Ga_\rho} \le  \frac{1}{\sqrt d}\max_i (\text{dim}(\La_{\rho_i(\Ga)})) \le  \frac{2}{\sqrt d} $$
by
\cite[Coro. 3.6]{KMO}; here the notation $\text{dim}(\cdot)$ means the Hausdorff dimension of a {measurable} 
 subset of $\hat \c\simeq \mathbb S^2$ with respect to the spherical metric.
\item If all $\rho_i:\Ga\to \PSL_2(\c)$ are quasiconformal deformations of $\Ga$ and $\infty\notin \cup_{i=1}^d \La_{\rho_i(\Ga)}$, then for any
 bounded torus packing $\cal P=\Ga_\rho T$ with $T=(C_1, \cdots, C_d)$, 
$\cal P$ is locally finite if and only if { $\{\rho_i(\ga)C_i:\ga\in\Ga\}$} is a locally finite circle packing for all $1\le i \le d$. This is because the boundary map $f_i$ is the restriction to $\La_{\rho_i(\Ga)}$ of the quasiconformal homeomorphism $F_i:\hat\c\to \hat \c$ associated to $\rho_i$, and under the hypothesis $\infty\notin \cup_{i=1}^d \La_{\rho_i(\Ga)}$, the $F_i$ are bi-H\"older
maps on any compact subset of $\c$ (\cite{FS}, \cite{Tu}).

\end{enumerate}

\end{Rmk}

\subsection*{More general torus-counting theorems} In order to present a more general torus-counting theorem, we define the length vector of a torus $T=(C_1,\cdots,C_d)$ by
$$\vecv(T)=-\big(\log \text{radius}(C_1), \cdots, \log \text{radius}(C_d) \big)\in\br^d ;$$
where we used the negative sign so that the $i$-th coordinate of
$\vecv (T)$ tends to $+\infty$ as $C_i$ shrinks to a point.
 The following { result} is the main theorem of this paper{.}
\begin{thm}\label{thm.main.intro}  Let $\psi$ be any linear form on $\br^d$ such that $\psi>0$ on ${ (\br_{\ge 0})}^d-\{0\}$. There exist $\delta_\psi >0$ and a locally finite Borel measure $\omega_\psi$ on $\La_\rho\cap {\mathbb C}^d$ depending only on $\Ga_\rho$ and $\psi$ for which the following hold:
for any $\Ga_\rho$-admissible torus packing $\cal P$,
there exists a constant $c_\psi=c_{\cal P,\psi}>0$ such that for any bounded {measurable} subset $E\subset \c^d$ with boundary contained in a proper real algebraic subvariety, we have, as $R\to \infty$,
\begin{equation}\label{eq.E}
\lim_{R\to \infty} \frac{1}{ e^{\delta_\psi R}} {\#\{T\in \cal P: \psi (\vecv(T)) <R, \, T\cap E\ne \emptyset\} } =
c_{\psi}{\;} \omega_{\psi} (E\cap \Lambda_\rho).
\end{equation}
\end{thm}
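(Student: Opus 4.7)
The plan is to adapt the rank-one circle counting scheme of Oh--Shah \cite{OS} to the rank-$d$ self-joining setting in three stages: (i) recast the torus count as a $\Ga_\rho$-orbit count in $G=\prod_{i=1}^d\PSL_2(\c)$; (ii) identify the length vector $\vecv(T)$ with the Cartan projection $\mu:\Ga_\rho\to\br_{\ge 0}^d$; and (iii) invoke equidistribution of orbits of the torus stabilizer $H$, with rate and normalization indexed by $\psi$. Since $\cal P$ is a finite union of $\Ga_\rho$-orbits, reduce to a single orbit $\Ga_\rho T_0$ with $T_0=(C_1^0,\ldots,C_d^0)$, and set $H=\prod_i H_i<G$, where $H_i=\Stab_{\PSL_2(\c)}(C_i^0)$ is a conjugate of $\PSL_2(\br)$; the map $\gamma(\Ga_\rho\cap H)\mapsto\gamma T_0$ identifies $\Ga_\rho/(\Ga_\rho\cap H)$ with $\Ga_\rho T_0$. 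Using the $KAK$ decomposition in each factor one checks $\vecv(\gamma T_0)=\mu(\gamma)+O_{T_0}(1)$ with $\mu(\gamma)=(\mathsf{d}_{\bH^3}(\rho_i(\gamma)o,o))_{i=1}^d$, so the condition $\psi(\vecv(\gamma T_0))<R$ becomes $\psi(\mu(\gamma))<R+O_\psi(1)$.

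Define $\delta_\psi$ as the abscissa of convergence of $s\mapsto\sum_{\gamma\in\Ga}e^{-s\psi(\mu(\gamma))}$. A Patterson--Sullivan construction relative to the functional $\delta_\psi\psi$ produces a $(\Ga_\rho,\delta_\psi\psi)$-conformal density $\{\nu^\psi_x\}$ supported on $\La_\rho$ (in the spirit of Quint, Sambarino, Kim--Oh), together with $\psi$-analogues $m^{\BMS}_\psi$ and $m^{\BR}_\psi$ of the Bowen--Margulis--Sullivan and Burger--Roblin measures on $\Ga_\rho\ba G$. The locally finite measure $\omega_\psi$ on $\La_\rho\cap\c^d$ is then defined by transporting the product of boundary conformal densities through the factor-wise parametrizations of $\partial\bH^3$ by $\c$, with Busemann/Jacobian correction dictated by $\delta_\psi\psi$; this is the natural $\psi$-analogue of the $\delta_\Ga$-Hausdorff measure appearing in Theorem \ref{cc}.

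For the count itself, I smoothen by convolving with a bump function $\Psi_\epsilon$ supported in an $\epsilon$-neighbourhood of $e\in G$, so that the discrete sum $\sum_\gamma\chi_{\{\psi(\mu(\gamma))<R\}}\chi_E(\gamma T_0)$ becomes, via unfolding, an integral on $\Ga_\rho\ba G$ of a function localized near a translated $H$-orbit. Choosing $\vecv_\psi\in\br_{>0}^d$ with $\psi(\vecv_\psi)=1$, the integral reduces to the pairing of $H a_{R\vecv_\psi}$ against $m^{\BR}_\psi$. Exponential mixing of the directional flow $(a_{t\vecv_\psi})$ with respect to $m^{\BMS}_\psi$, combined with a wavefront/thickening lemma transferring mixing into equidistribution of $H$-translates, should give
\[
\#\{T\in\Ga_\rho T_0:\psi(\vecv(T))<R,\;T\cap E\ne\emptyset\}\;\sim\;c_{\cal P}\cdot\omega_\psi(E\cap\La_\rho)\cdot e^{\delta_\psi R}
\]
as $R\to\infty$, from which \eqref{eq.E} follows after summing over the finitely many orbit representatives and letting $\epsilon\to 0$.

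The decisive step is the exponential mixing of $(a_{t\vecv_\psi})$ on $(\Ga_\rho\ba G,m^{\BMS}_\psi)$ with the correct rate, paired with the wavefront lemma for $Ha_{t\vecv_\psi}$. In the rank-one case of \cite{OS} this is delivered by spectral theory on $\bH^3$, but here $G$ has rank $d$, $H$ is a product of conjugates of $\PSL_2(\br)$ across the various factors, and the flow runs along a specific ray in the full Weyl chamber; one therefore must invoke the higher-rank Patterson--Sullivan and BMS theory for Anosov/self-joining subgroups developed by Quint, Sambarino, Kim--Oh, Edwards--Lee--Oh, and others, and carefully match the normalization of $\psi$ against the Weyl-chamber direction. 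A secondary technical point is verifying $\omega_\psi(V\cap\La_\rho)=0$ for every proper real algebraic subvariety $V\subset\c^d$, which is needed to handle $\partial E$; this should follow from the Zariski density of $\Ga_\rho$ in $G$ together with the conformal transformation law of $\nu^\psi$ under the factor-wise action.
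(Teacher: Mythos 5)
Your overall skeleton (reduce to a single orbit, pass to an orbital count for $\Ga_\rho$ relative to $H=\op{Stab}_G(T_0)$, build a $\psi$-Patterson--Sullivan/Burger--Roblin framework, smooth and unfold, and deduce the count from equidistribution of translates of the $H$-orbit) is indeed the architecture of the paper, but two of your reduction steps contain genuine gaps. First, the identification $\vecv(\gamma T_0)=\mu(\gamma)+O_{T_0}(1)$ is false: the radius vector of $\gamma T_0$ is constant on the coset $\gamma(\Ga_\rho\cap H)$ while the Cartan projection $\mu$ is unbounded on it, and even for a fixed representative the apex $\rho_i(\gamma)o$ of the image hemisphere can sit at height far below the radius of $\rho_i(\gamma)C_0$, so $\mathsf d(\rho_i(\gamma)o,o)$ is not $-\log \mathsf r(\rho_i(\gamma)C_0)+O(1)$. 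The correct encoding (Proposition \ref{BETcount1}) is geometric: $\psi(\vecv(T))<R$ and $T\cap E\ne\emptyset$ correspond to membership of $[e]\Ga_\rho$ in the region $H\ba HKA^+_{\psi,R}N_{-E}$, i.e.\ the convex hull $\gamma\hat T_0$ meets $N_E(A^+_{\psi,R})^{-1}o$, and one then needs the generalized Cartan ($G=HA^+K$) and wavefront lemmas (Lemmas \ref{GEO} and \ref{WAVE}) to convert this into a region adapted to $H$-translates. Relatedly, you never use the second admissibility condition $f_i(C_1\cap\La_\Ga)=C_i\cap\La_{\rho_i(\Ga)}$; in the paper this is what forces the relevant part of $\Ga_\rho\cap H\ba H$ to be compact and the skinning constant finite (Propositions \ref{prop.UP} and \ref{lem.supp}), without which the unfolded integral need not converge.

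The more serious gap is your step (iii). Because $\psi$ is linear, the counting region $\{t\in\fa^+:\psi(t)<R\}$ is a slab, not a strictly convex ball, so the count cannot be reduced to the single translate $Ha_{R\vecv_\psi}$ (nor to mixing of the one-parameter flow $a_{t\vecv_\psi}$, exponential or otherwise): one must integrate translates $H\exp(su_\psi+\sqrt s\,w)$ over all transverse directions $w\in\ker\psi$, and the limit constant genuinely contains the Gaussian integral $\int_{\ker\psi}e^{-I(w)}\,dw$, which a single-direction argument cannot produce. What makes this work is not exponential mixing but local mixing with the polynomial normalization $s^{(d-1)/2}e^{(2\sigma-\psi)(su_\psi+\sqrt s w)}$, and crucially its \emph{uniform} version in $w$ with the bound $e^{-\ell I(w)}$ (Theorem \ref{m11} and Proposition \ref{prop.mixH2}), which is what allows dominated convergence in Lemma \ref{lem.ab} and then Theorem \ref{thm3}; the paper stresses that this uniformity is exactly the new input beyond \cite{OS} and \cite{ELO}, whereas the strictly convex (Euclidean norm) case you implicitly have in mind is easier and does not need it. Finally, a minor point: the null-set statement for $\partial E$ is proved in the paper not from Zariski density plus the conformal transformation law alone, but from ergodicity of $\nu_\psi\times\nu_{\psi\circ\i}$ on $\F\times\F$ together with an irreducible-variety argument (Theorem \ref{co1}); your sketch of that step would need to be replaced or substantiated.
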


The description of the measure $\omega_\psi$ (Def. \ref{eq.om})
depends on the higher rank Patterson-Sullivan theory. In fact, it is  equivalent to the unique $(\Ga_\rho,\psi_0)$-conformal measure on $\La_\rho$, where $\psi_0$ is { the } unique $\Gamma_\rho$-critical linear form (Def. \ref{def.crit})
proportional to $\psi$.
We refer to Def. \ref{dp} for the definition of $\delta_\psi$.

\begin{remark}\label{Euc} \rm
\begin{enumerate}
\item Theorem \ref{m1} can be deduced from this theorem by considering the linear form ${\psi} : (t_1, \cdots, t_d)\mapsto t_1+\cdots+t_d$ (see Ex. \ref{tr}). 
\item Our approach can also handle the case where $\psi(\vecv (T))$ is replaced by the Euclidean norm of $\vecv(T)$ in \eqref{eq.E}; indeed, the analysis involved in that case is easier due to the strict convexity of the Euclidean balls in $\br^d$ (see the last subsection of Sec \ref{pm}).
\item The fact that the sublevel sets $\{t\in \br^d: \psi(t) < c\}$ are linear (hence not strictly convex) presents new technical difficulties which were not dealt with in related previous works such as \cite{OS} and \cite{ELO}.

\end{enumerate}
\end{remark}

We now discuss examples of admissible torus packings arising naturally from the Teichm\"{u}ller theory of Kleinian groups. 

\begin{figure}[h]
\centering \captionsetup{justification=centering}
      \includegraphics[totalheight=4cm]{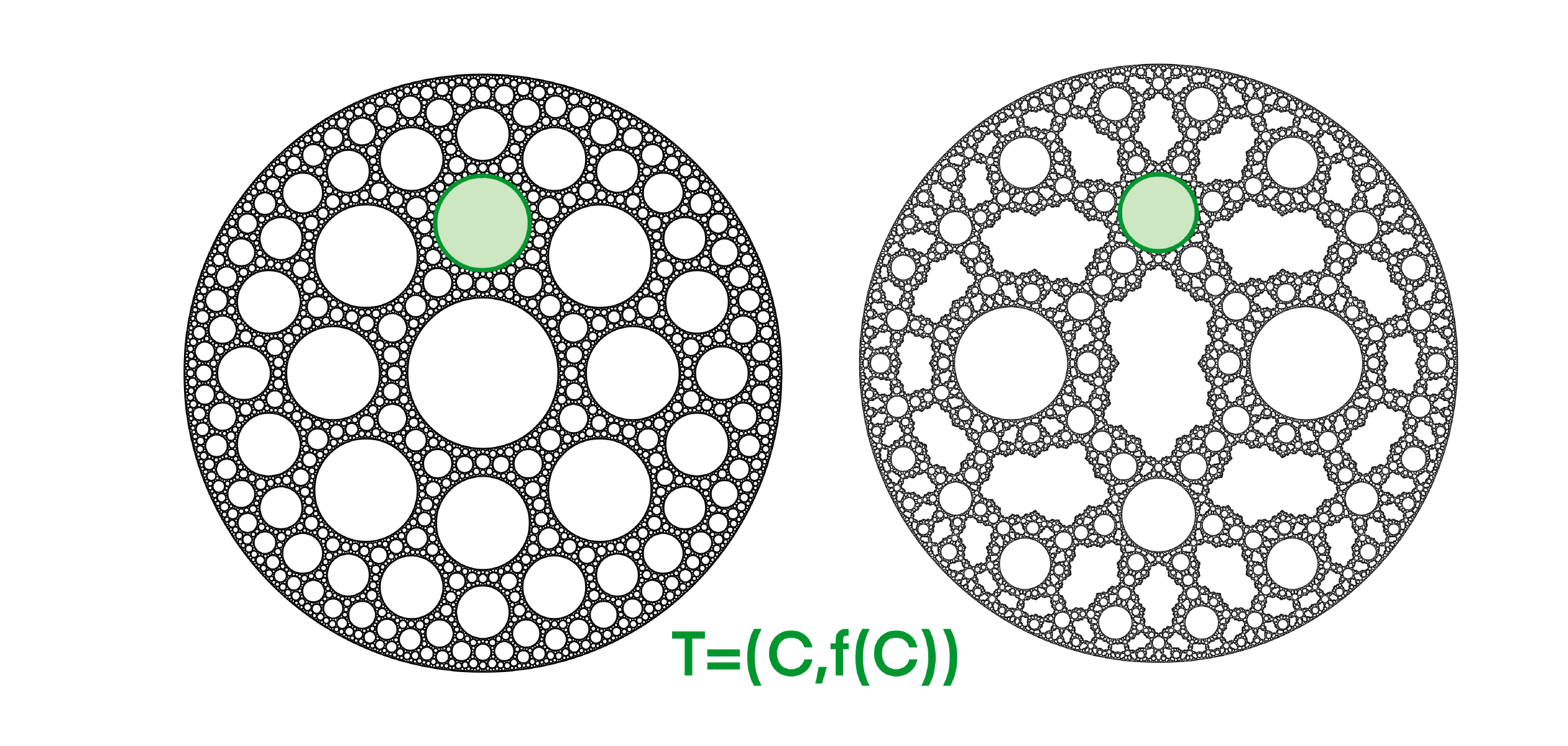}
\caption{{The left-hand side is the limit set of a \Conv Kleinian group $\Ga$
and the right-hand side is the limit set of a quasi-conformal deformation, say, $\rho_0$, of $\Ga$.
 Denoting by $f$ the associated quasiconformal map, $f$ maps the first green circle, say $C$, to the second green circle. Hence the torus  $T=(C, f(C))$ is a $(\text{id}\times \rho_0)(\Gamma)$-admissible torus.}
 (image credit: Yongquan Zhang)}
\label{couple}
\end{figure}  

\begin{Ex}\rm \begin{enumerate}
    \item 
 Let $\Ga<\PSL_2(\c)$ be a Zariski-dense and convex-cocompact subgroup whose domain of discontinuity $\Omega_\Ga:=\hat \c -\La_\Ga$ has a connected component which
is a round open disk $B$.  Let $C_1:=\partial B$ and $d\ge 2$. By the Teichm\"{u}ller theory of $\Gamma$, which relates the Teichm\"{u}ller space of the Riemann surface $\Ga\ba \Omega_\Ga$ and the quasi-conformal deformation space of $\Ga$ (\cite[Thm. 5.27]{MT}, \cite{Mar}) we may choose quasi-conformal deformations
$\rho_i:\Ga\to \PSL_2(\c)$, $2\le i\le d$, whose associated quasiconformal maps 
$f_i:\hat \c\to \hat \c$ map $C_1$ to a circle, say, $C_i$. Then
$T=(C_1, \cdots, C_d)$ is a $\Ga_\rho$-admissible torus for $\rho=(\text{id}, \rho_2, \cdots, \rho_d)$  and hence $\cal P=\Ga_\rho T$ is a $\Ga_\rho$-admissible torus packing (see Figure \ref{couple} for an example when $d=2$). Note also that $\cal P$ consists of {\it disjoint} tori, and hence gives rise to a genuine packing. 

\item 
Let $\Ga$ be 
{ a rigid acylindrical \Conv Kleinian group}, that is, $\Omega_\Ga$ is a union of infinitely many round disks with mutually disjoint closures.
Let $\rho_0:\Ga\to \PSL_2(\c)$ be a quasiconformal deformation of $\Ga$ which is not a conjugation, and $f:\hat\c\to \hat \c$ the associated quasiconformal map.
{
Denoting by $\cal C$ the space of all round circles in $\hat{\bb C}$,}
it follows from  (\cite{MMO}, \cite{MMO2}, \cite{BO})  that the set of all circles 
{ $C\in\cal C$} such that $\# C\cap \La_\Ga\ge 2$ and $f(C)$
is a circle is a finite union of closed $\Ga$-orbits in $\cal C$. Indeed, if $C\in \cal C$ meets $\La_\Ga$ at more than one point, then either $C$ separates $\La_\Ga$ or $C\subset \La_\Ga$.
Since the set of circles contained in $\La_\Ga$ is a finite union of closed $\Ga$-orbits, it suffices to note that the set of all separating circles such that $f(C)$
is a circle is a finite union of closed $\Ga$-orbits. This follows from \cite[Thm. 1.5]{MMO} and \cite[Thm. 1.6]{BO}, since
otherwise such a set must be dense in  the space $\cal C_{\La_\Ga}$ of all circles
meeting $\La_\Ga$, and hence $f$ must map all circles in $\cal C_{\La_\Ga}$ to circles. That implies that $f$ is conformal \cite{MT} and hence $\rho$ is a conjugation, a contradiction.

Therefore
the following $2$-dimensional torus packing
$$\cal P:=\{(C, f(C)) :C, f(C) \text{ are circles and  } \; \# C\cap \La_\Ga\ge 2\} $$
is  $(\op{id}\times \rho_0)(\Ga)$-admissible.
\end{enumerate}
  \end{Ex}

\subsection*{On the proof of Theorem \ref{thm.main.intro} }
First of all, the self-joining group
$\Ga_\rho$ is { an} \textit{Anosov subgroup} of $G$ introduced in \cite{GW} (see Def. \eqref{eq.Q}), which enables us to apply the general ergodic theory developed for Anosov subgroups. While certain types of counting problems for orbits of Anosov subgroups in affine symmetric spaces were studied in our earlier paper \cite{ELO} using higher rank Patterson-Sullivan theory, there were certain serious technical restrictions imposed in
\cite{ELO} which made it unclear what kind of torus packing counting problems could be approached using techniques there.
 One of the main novelties of this paper is to have isolated a natural class of torus packings (which are provided by the Teichm\"{u}ller theory of Kleinian groups) for which we can apply the counting machinery of \cite{ELO}.

It is not hard to reduce  the proof of Theorem \ref{thm.main.intro} to the case where
 $\cal P$ is of the form $\Ga_\rho T_0$, where $T_0$ is the product of
the unit circles centered at the origin and $\psi$ is a so-called $\Ga_\rho$-critical linear form (see Def. \ref{def.crit}). As in \cite{OS}, we first translate the counting problem for torus packings into an orbital counting problem in $H\ba G$ where $H=\op{Stab}_G(T_0)$; by introducing a suitable bounded {measurable} 
 subset $B_\psi(E,R)\subset H\ba G$ in \eqref{BER}, we are led to consider the asymptotic of 
\begin{equation*}\label{eq.G}
\#\big([e]\Ga_\rho\cap  {B}_\psi(E,R)\big)    
\end{equation*}
as $R\rightarrow\infty$. The key ingredient for obtaining \eqref{eq.E} as $R\to\infty$ is 
a description of the asymptotic behavior of 
\be \label{inner} \int_{{B_\psi}(E,R)} \left(\int_{\Ga_\rho\cap H \ba H} f( [h] g)\,d[h]\right) \,d[g]\ee 
for $f\in C_c(\Ga_\rho\ba G)$, as $R$ tends to infinity,
as given in Theorem \ref{thm3}.
The $\Ga_\rho$-admissibility assumption on $\cal P=\Ga_\rho T_0$ is used
to guarantee 
\begin{itemize}
    \item 
the existence of some compact subset $S\subset \Ga_\rho\cap H\ba H$, {\it independent of $R$}, such that
the integral \eqref{inner} can be expressed as
\be \label{inner2} \int_{[g]\in {B_\psi}(E,R)} \left(\int_{[h]\in S} f( [h] g)\,d[h]\right) d[g]\ee 

\item the finiteness of the skinning constant of $\Ga_\rho\cap  H\ba H$ (see \eqref{eq.sk}).
\end{itemize}

With this information, as well as the analysis of the asymptotic shape of the family of the subsets
 $\{B_\psi(E,R): R>0\}$, we are able to apply the mixing result from \cite[Thm. 3.4]{ELO2} and \cite[Thm. 1.3 \& Thm. 1.4]{CS}, and the equidistribution result from \cite{ELO} which describes
 the asymptotic of the integral \eqref{inner} in terms of the Burger-Roblin measures introduced in \cite{ELO}.
We emphasize that due to the higher rank nature of the subsets $B_\psi(E,R)$, combined with the
linear nature of $\psi$, whose sublevel sets are not strictly concave, the uniformity aspect
in these results (see Propositions \ref{prop.mixH} and \ref{prop.mixH2} for the nature of the uniformity that is required) is crucial for our analysis.
In fact, working on this article led us to conjecture the precise uniformity formulation of the mixing results in \cite{CS}, which were verified and appeared in an updated version by the authors. Finally, we remark that the measure $\omega_\psi$
is the leafwise measure of the Burger-Roblin measure on the strict upper triangular subgroup of $G$ ($\simeq \c^d$) (see Proposition \ref{BRPS}).

\subsection*{Organization}
\begin{itemize}
    \item 
    In Section \ref{sec:pre}, we start by recalling the basic higher rank Patterson-Sullivan theory of self-joining groups.

    \item
    In Section \ref{sec:adm}, we discuss an important property of $\Ga_\rho$-admissible torus packings and its consequences.
    \item 
    In Section \ref{sec:count}, we define the family $\{B_\psi(E,R)\subset H\ba G: R>0\}$ and explain how Theorem \ref{THM2} can be translated into an orbital-counting
    problem for a $\Ga_\rho$-orbit in $H\ba G$ with respect to the family $\{B_\psi(E,R): R>0\}$.
   
    \item
    In Section \ref{uSec}, mixing and and equidistribution results  from \cite{CS} \cite{ELO} will be recalled with an emphasis on their uniformity aspects.
  
    \item In Section \ref{omegaSec}, the measure $\om_{\psi}$ will be given explicitly and analyzed.
      \item
    In Section \ref{sec.main}, we prove the key technical ingredient (Theorem \ref{thm3}) of the paper, which accounts for the asymptotic distribution of the average of translates of the $H$-orbit over the set $B_\psi(E,R)$ as $R\to\infty$.
    \item 
    In Section \ref{pm}, we prove the main theorem (Theorem \ref{THM2}).

    \item 
    In Section \ref{sec:f}, we prove that every proper subvariety of $\bb C^d$ has zero Patterson-Sullivan measure and hence zero $\om_\psi$ measure; this is shown for a general Anosov subgroup of a semisimple real algebraic group. 
  
\end{itemize}

\subsection*{Acknowledgements}
We would like to thank Dongryul Kim for useful conversations on a related topic.
\section{Self-joinings  and higher rank Patterson-Sullivan theory}\label{sec:pre}
Let $\bH^3=\{(z, r):z\in \c, r>0\}$ denote the upper {halfspace} model
of hyperbolic $3$-space with constant curvature $-1$, $\mathsf d
$ the hyperbolic metric on $\bH^3$ and {$o=(0,1)\in\bH^3$}.
The geometric boundary of $\bH^3$ is the extended complex plane $\hat{\bb C}:=\bb C\cup\{\infty\}$, which is the Riemann sphere.
The M\"obius transformation action of 
the group $\op{PSL}_2(\bb C)$ on $\hat \c$ 
extends to the action on the compactification $\bH^3\cup \hat \c$, and gives rise to the identification  $\PSL_2(\c)\simeq \op{Isom}^\circ(\bH^3)$, the identity component of the isometry group of $\bH^3$. 
Similarly, the product group
$$
G=\prod_{i=1}^d \PSL_2(\c)
$$
acts on $\hat{\bb C}^d$ component-wise, giving rise to an isomorphism of
 the group $G$ with $\op{Isom}^\circ(\prod_{i=1}^d \bb H^3)$, the identity component of the isometry group of the Riemannian product 
 { $(\bH^3)^d$}.

\subsection*{Self-joinings of \Conv subgroups}
Let $\Ga<\op{PSL}_2(\bb C)$ be a torsion-free \Conv subgroup, that is, the convex core of the associated hyperbolic manifold $\Ga\ba \bH^3$ is compact.

Let $\rho=(\rho_1=\text{id}, \rho_2, \cdots, \rho_d)$ be a $d$-tuple of faithful \Conv representations of $\Ga$ into $\PSL_2(\c)$, i.e.\ each $\rho_i(\Ga)$ is a \Conv subgroup of $\PSL_2(\c)$.
 \begin{Def}\rm  The self-joining of $\Ga$ by $\rho$ is defined as the following discrete subgroup of $G$:
$$
\Gamma_\rho 
=\{(\rho_1(g), \cdots  \rho_d(g))\in G: g\in \Gamma\}.
$$
\end{Def} 

Recall that throughout the entire paper we assume that
$$\text{$\Ga_\rho$ is \Zar in $G$.}$$
\subsection*{Anosov subgroups}
Let $|\cdot|$ denote the word length on $\Ga$ with respect to a fixed finite { generating set}.
Since each $\rho_i$ is \Conv, there exists $C>0$ such that
\begin{equation}\label{eq.Q}
\mathsf{d}(\rho_i(g)o,o)>C|g|-C^{-1}\text{ for all }g\in\Ga\text{ and }1\leq i\leq d.
\end{equation}
In other words,
$\Ga_\rho$ is an \textit{Anosov subgroup} (with respect to a minimal parabolic subgroup) ({see }\cite{KLP2} and \cite{GW}).
 This is the most important feature of the self-joining $\Ga_\rho$ which will be used in this paper.
We remark that any Anosov subgroup of $G$ arises in this way {in view of the characterization \cite[Thm 1.5]{KLP2}}.

\subsection*{Limit set} The product $\cal F=\hat \c^d$ is equal to the Furstenberg boundary of $G$; note that for $d>1$, $\cal F$ is not the geometric boundary of $\prod_{i=1}^d\bH^3$.
Let $P<G$ be the product of the upper triangular subgroups of the $\PSL_2(\c)$ components of $G$, i.e.,
$P=\op{Stab}_G(\infty, \cdots,\infty)$. Then
$$\cal F\simeq G/P.$$

The \textit{limit set} of $\Ga_\rho$ { in $\cal F$} is defined as the {set} of all accumulation points of {any} 
$\Ga_\rho$-orbits
in $\prod_{i=1}^d\bH^3$
on  $\cal F=\hat \c^d$:
\begin{equation*}\label{eq.lim}
{\La_{\rho}}:=\{\lim_{j\to\infty} (\rho_1(g_j)o, \cdots,
\rho_d(g_j)o)\in \hat{\bb C}^d : g_j\in\Ga,\, g_j\rightarrow\infty\}.
\end{equation*}
This definition coincides with the definition of the limit set given by Benoist (\cite[Lemma 2.13]{LO}, \cite{B}).
Note that for $d=1$, this is the usual limit set { $\La_\Ga$ of the Kleinian group $\Ga$}.
{ Let} $\La_{\rho_i(\Ga)}\subset\hat{\bb C}$ denote
the {usual} limit set of $\rho_i(\Ga)$. 

By the \Conv assumption on $\rho_i$, there  exists 
a unique $\rho_i$-equivariant homeomorphism $f_i : \La_{\Ga}\to \La_{\rho_i(\Ga)}$:
\begin{align}\label{eq.EQV}
    f_i(g\xi)=\rho_i(g)f_i(\xi)\quad \text{ for all }g\in\Ga\text{ and }\xi\in\La_\Ga.
\end{align}
In particular, we have
$$\La_\rho=\{ (f_1(\xi), \cdots, f_d(\xi)):\xi\in \La_\Ga\} .$$

\subsection*{Cartan projection} For $t=(t_1, \cdots, t_d)\in\bb R^d$, set
\be\label{at}
a_t=\left(\smatr{e^{t_1/2}}{0}{0}{e^{-t_1/2}}, \cdots, \smatr{e^{t_d/2}}{0}{0}{e^{-t_d/2}}\right).
\ee
We let
$$
A=\{a_t:t\in\bb R^d\}<G\;\; \text{ and } \;\;
A^+=\{a_t: t_i\ge 0 \text{ for all $1\le i\le d$}\}.
$$
We respectively identify $\bb R^d$  and  $\bb R_{\geq 0}^d$ with the Lie algebra $\frak a=\log A$ and its positive Weyl chamber $\frak a^+=\log A^+$  via the map $t\mapsto\log a_t$.
For $g=(g_1, \cdots, g_d)\in G$, the \textit{Cartan projection} of $g$ is defined as 
 $$\mu(g)=(\mathsf d(g_1o, o), \cdots, \mathsf d(g_do, o))\in \frak a^+.$$

\subsection*{Limit cone and its dual cone}
\begin{Def}\label{lc}\rm 
The \textit{limit cone} of $\Ga_\rho$ is
 the asymptotic cone of $\{\mu(\ga)\in { (\br_{\ge 0})}^d:\ga\in\Ga_\rho\}$, which we denote by $\cal L_\rho$.
Alternatively, it is the smallest closed cone in $\frak a^+$
containing $\{\big(\ell_1(g), \cdots, \ell_d(g) \big): g\in\Ga\}$, 
where $\ell_i(g)$ denotes the length of the closed geodesic representing the conjugacy class of $\rho_i(g)$ (\cite{Bu}, \cite[Thm. 1.2]{B}).
\end{Def}

Since  $\sup_{g\in\Gamma} (\ell_i(g)/\ell_j(g))<\infty$  for all $i,j$  by the { convex-cocompactness } assumption, we have 
$$\cal L_\rho \setminus\lbrace 0\rbrace\subset \op{int}\frak a^+,$$
where $\op{int}\cal C 
$ denotes the interior of a cone $\cal C$.
We denote by $\fa^*$ the space of all linear forms on $\fa$. The dual cone of $\cal L_\rho$ is given by
$$
\cal L_\rho^*:=\{\psi\in\frak a^*: \psi|_{\cal L_\rho}\geq 0\}.
$$

Note that 
$$\psi|_{\cal L_\rho-\{0\}}>0\text{  if and only if } \psi\in\op{int}\cal L_\rho^* .$$

\begin{Def} \label{dp} \rm For $\psi\in\op{int}\cal L_\rho^*$,
let $\delta_\psi{\in [0,\infty]}$ denote the abscissa of convergence for the series
$$
s\mapsto \sum_{\ga\in\Ga_\rho}e^{-s\psi(\mu(\ga))}.
$$
\end{Def} 

\subsection*{Critical linear forms}
Let $\norm{\cdot}$ denote the Euclidean norm on $\frak a=\br^d$.
The \textit{growth indicator function} $\Phi_{\rho} : \frak a^+\to\bb R
\cup\{-\infty\}
$ \cite[{$\mathsection$4.2}]{Qu1} is defined as follows:
{ $\Phi_\rho(0)=0$ and} 
for any vector $u\in \frak a^+{-\{0\}}$,
\begin{equation}\label{grow}
\Phi_{\rho}(u):=\|u\| \inf_{\underset{u \in\cal D}{\mathrm{open\;cones\;}\cal D\subset \frak a^+}}
\tau_{\cal D},
\end{equation}
where $\tau_{\cal D}$ is the abscissa of convergence of the series 
$$
\mathsf P_{\cal D}(s)=\sum_{\ga\in\Ga_\rho,\,\mu(\ga)\in\cal D} e^{-s\norm{\mu(\ga)}}.
$$
\begin{Def}\label{def.crit}
A linear form $\psi\in\frak a^*$ is said to be $\Ga_\rho$-\textit{critical} if \begin{itemize}
    \item $\psi\geq\Phi_\rho$ on $\frak a^+$;
\item $\psi(u)=\Phi_\rho(u)$ for some  $u\in\frak a^+-\{0\}$.
\end{itemize}
\end{Def}

The following lemma is due to Quint{.}
\begin{lem}\cite[{Thm. 4.2.2, Lem. 3.1.3, 3.1.7}]{Qu1}\label{lem.H}
\begin{itemize}
    \item For each $\psi\in\op{int}\cal L_\rho^*$, there exists $s>0$ such that $s\psi$ is a $\Ga_\rho$-critical linear form.
\item If $\psi$ is $\Ga_\rho$-critical, then 
$\delta_\psi=1.$
\end{itemize}
\end{lem}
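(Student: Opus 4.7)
The plan is to treat the two assertions separately, both of which hinge on the homogeneity (of degree one) and upper semicontinuity of $\Phi_\rho$, the fact that $\mathcal L_\rho \setminus \{0\}\subset \operatorname{int}\mathfrak a^+$, and the subadditive control $\Phi_\rho(u)\le (\operatorname{crit.\ exp.\ of}\ \Gamma_\rho)\cdot\|u\|$. Throughout, $\Phi_\rho$ vanishes off $\mathcal L_\rho$ in the relevant sense (it equals $-\infty$ outside), so we may restrict all suprema to directions in $\mathcal L_\rho$.

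For the first item, I would define
\[
s_0:=\sup_{u\in\mathfrak a^+\setminus\{0\}}\frac{\Phi_\rho(u)}{\psi(u)}=\sup_{u\in\mathcal L_\rho,\ \|u\|=1}\frac{\Phi_\rho(u)}{\psi(u)}.
\]
Since $\psi\in\operatorname{int}\mathcal L_\rho^*$, continuity of $\psi$ on the compact set $\mathcal L_\rho\cap S^{d-1}$ gives a positive lower bound for $\psi$ there, while $\Phi_\rho$ is bounded above on that set (by the critical exponent of the ordinary Poincar\'e series). Hence $s_0\in(0,\infty)$. Upper semicontinuity of $\Phi_\rho$ and continuity of $\psi$, combined with compactness of $\mathcal L_\rho\cap S^{d-1}$, show the supremum is attained at some $u_0\neq 0$. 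By homogeneity, $s_0\psi(u)\geq \Phi_\rho(u)$ for every $u\in\mathfrak a^+$ with equality at $u_0$, so $s_0\psi$ is $\Gamma_\rho$-critical. Setting $s=s_0$ establishes (1).

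For the second item, assume $\psi$ is $\Gamma_\rho$-critical and let us show $\delta_\psi=1$. For the upper bound $\delta_\psi\le 1$: given $\varepsilon>0$, by upper semicontinuity and the definition \eqref{grow}, one can cover $\mathcal L_\rho\cap S^{d-1}$ by finitely many small open cones $\mathcal D_1,\dots,\mathcal D_N$ such that, on each $\mathcal D_j$, $\tau_{\mathcal D_j}\|u\|\le \Phi_\rho(u_j^\ast)+\varepsilon\|u\|\le \psi(u)+2\varepsilon\|u\|$ for every $u\in\mathcal D_j$ (using $\psi\ge\Phi_\rho$ and a direction $u_j^\ast\in\mathcal D_j$). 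Breaking $\sum_{\gamma\in\Gamma_\rho}e^{-s\psi(\mu(\gamma))}$ into the pieces where $\mu(\gamma)\in\mathcal D_j$ and comparing with the convergent tail of $\mathsf P_{\mathcal D_j}(s\cdot\inf_{u\in\mathcal D_j,\|u\|=1}\psi(u))$ for $s>1$ (up to the small $\varepsilon$ correction) yields convergence, hence $\delta_\psi\le 1$. For the lower bound $\delta_\psi\ge 1$: at a direction $u_0$ where $\psi(u_0)=\Phi_\rho(u_0)$, pick an arbitrarily small open cone $\mathcal D\ni u_0$; by the infimum in \eqref{grow}, $\tau_{\mathcal D}\ge \Phi_\rho(u_0)/\|u_0\|-\varepsilon$. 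On $\mathcal D$ one has $\psi(u)\le (\Phi_\rho(u_0)/\|u_0\|+\varepsilon')\|u\|$ by continuity, so $\mathsf P_{\mathcal D}$ at exponent $\tau_{\mathcal D}-\varepsilon$ (which diverges) is majorised by $\sum_{\gamma,\mu(\gamma)\in\mathcal D}e^{-(1-C\varepsilon)\psi(\mu(\gamma))}$ up to bounded factors, forcing $\delta_\psi\ge 1-C\varepsilon$ and hence $\delta_\psi\ge 1$ on letting $\varepsilon\to 0$.

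The main technical obstacle is to pin down uniform two-sided comparisons between $\psi(\mu(\gamma))$ and $\|\mu(\gamma)\|$ on small cones; this requires using the fact that $\mathcal L_\rho\setminus\{0\}\subset\operatorname{int}\mathfrak a^+$ (so $\psi$ is pinched between two positive multiples of $\|\cdot\|$ on each small enough cone through $\mathcal L_\rho$), together with careful use of upper semicontinuity of $\Phi_\rho$ to match the $\varepsilon$-losses in the two bounds. Once this calibration is done, both the upper and lower bounds on $\delta_\psi$ follow by an elementary comparison of Poincar\'e series in the Cartan projection versus the linear-form projection, which is exactly the content of the Quint lemmas cited.
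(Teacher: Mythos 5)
Your treatment of the first bullet is essentially the paper's argument in disguise: the paper sets $s_0=\inf\{s\ge 0: s\psi\ge \Phi_\rho\}$ and uses upper semicontinuity of $\Phi_\rho$ to produce a point of tangency, while you take the equivalent $s_0=\sup_{u}\Phi_\rho(u)/\psi(u)$ over $\mathcal L_\rho\cap S^{d-1}$ and attain the supremum by compactness; these are the same proof. One point you gloss over: your argument only shows $s_0<\infty$. Positivity of $s_0$ requires knowing that $\Phi_\rho$ takes a strictly positive value somewhere (equivalently that the norm-critical exponent of $\Ga_\rho$ is positive), and this is precisely what the paper imports from Quint's Lem.\ IV.2.2; you should either cite that or note that Zariski density (here, the Anosov property) forces $\max_{\|u\|=1}\Phi_\rho(u)=\delta_{\Ga_\rho}>0$, since otherwise nothing prevents $s_0=0$, in which case no positive multiple of $\psi$ is critical.

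For the second bullet the paper simply quotes Quint's Lem.\ III.1.3, whereas you prove $\delta_\psi=1$ directly by comparing the $\psi$-Poincar\'e series with the norm-Poincar\'e series over a finite cover of $\mathcal L_\rho\cap S^{d-1}$ by small cones; this is a legitimate and more self-contained route, and your lower-bound argument at the tangency direction is correct as stated (note $\tau_{\mathcal D}\ge\Phi_\rho(u_0)/\|u_0\|$ holds for \emph{every} open cone containing $u_0$, so no $\varepsilon$-loss is needed there, and $\Phi_\rho(u_0)>0$ because $u_0\in\mathcal L_\rho$ and $\psi\in\op{int}\mathcal L_\rho^*$). In the upper bound, however, the calibration you flag as ``the main technical obstacle'' is where the quantifiers must be fixed: to conclude convergence at a given $s>1$ from $\psi(v)\ge(\tau_{\mathcal D_j}-2\varepsilon)\|v\|$ on $\mathcal D_j$, you need $\varepsilon$ chosen \emph{after} $s$ and small compared with $(s-1)\inf_{\mathcal L_\rho\cap S^{d-1}}\psi$, and the positive lower bound on $\psi$ over $\mathcal L_\rho\cap S^{d-1}$ comes from $\psi\in\op{int}\mathcal L_\rho^*$ (which is implicit anyway, since $\delta_\psi$ is only defined for such $\psi$), not from $\mathcal L_\rho\setminus\{0\}\subset\op{int}\frak a^+$ as you suggest. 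You should also say explicitly that elements with $\mu(\ga)$ outside a conical neighborhood of $\mathcal L_\rho$ contribute only finitely many terms (the asymptotic-cone property), so the finite cover of $\mathcal L_\rho\cap S^{d-1}$ suffices. With these repairs your sketch is sound and reproduces the content of the cited Quint lemmas.
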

\begin{proof}Set $s_0:=\inf\{s\geq 0:s\psi\geq\Phi_\rho\}$; we have $s_0\in(0,\infty)$ by \cite[{Thm. 4.2.2}]{Qu1}.
It follows that
$s_0\psi\geq\Phi_\rho$ and $s_0\psi(u)=\Phi_\rho(u)$ for some $u\in\frak a^+$ with $\norm{u}=1$, by the upper semi-continuity of $\Phi_\rho$ \cite[Lem. {3.}1.7]{Qu1}. In particular, $s_0\psi$ is $\Ga_\rho$-critical and the first assertion follows. The second assertion follows from \cite[Lem.{3.}1.3]{Qu1}.\end{proof}

\subsection*{Patterson-Sullivan measures}
Fix $o=(0,1)\in \bH^3$. By abuse of notation, we set 
$$
o=(o,\cdots,\,o)\in \prod_{i=1}^d \bb H^3.
$$ 
For $\xi=(\xi_1, \cdots, \xi_d)\in \hat \c^d$ and $g=(g_1, \cdots, g_d)\in G$,
the \textit{vector-valued Busemann map} is defined as
$$\beta_\xi(go, o)= (\beta_{\xi_1}(g_1 o, o), \cdots, \beta_{\xi_d}(g_d o, o))\in \frak a,$$
where  $\{\xi_i(t):t\ge 0\}$ is a geodesic ray in $\bH^3$ with $\lim\limits_{t\to+\infty}\xi_i(t)=\xi_i$ and
$$\beta_{\xi_i}(g_i o, o)=\lim_{t\to +\infty} \mathsf d(g_i o, \xi_i(t))-\mathsf d(o, \xi_i(t)).$$

Given a linear form $\psi\in\frak a^*$, a Borel probability measure $\nu$ supported on $\La_\rho$ is called a $(\Gamma_\rho,\psi)$-\textit{Patterson-Sullivan} ($\PS$) measure if 
for all $\ga\in \Ga_\rho$ and $\xi\in \cal F$,
$$\frac{d\ga_*\nu}{d\nu}(\xi)= e^{-\psi(\beta_{\xi}(\ga o, o))}.$$

We will say that $\nu$ is a $\Ga_\rho$-$\PS$ measure if it is a $(\Ga_\rho,\psi)$-$\PS$ measure for some $\psi\in\frak a^*$.
Extending the Patterson-Sullivan theory for rank one groups (\cite{Pa}, \cite{Su}), Quint \cite{Qu2} constructed  a $(\Ga_\rho,\psi)$-$\PS$ measure for each $\Ga_\rho$-critical linear form $\psi\in\frak a^*$ (see \cite{Bu} for earlier works on this).
As $\Ga_\rho$ is a \Zar Anosov subgroup of $G$, the following is a special case of \cite{LO}{.}
\begin{lem}\cite[{Thm. 1.1 and Thm. 4.3}]{LO}\label{lem.1-1}
For each $u\in\op{int}\cal L_\rho$, there exists a unique $\Ga_\rho$-critical linear form $\psi_u\in\frak a^*$ such that $\psi_u(u)=\Phi_\rho(u)$, and a unique $(\Ga_\rho,\psi_u)$-$\PS$ measure $\nu_{\psi_u}$.
The maps $u\mapsto \psi_u$ and $u\mapsto \nu_{\psi_u}$ give bijections among
$$
\{u\in\op{int}\cal L_\rho: \norm{u}=1\}\leftrightarrow\{\Ga_\rho\text{-critical linear forms}\}\leftrightarrow\{\Ga_\rho\text{-}\PS\text{measures}\}.
$$
\end{lem}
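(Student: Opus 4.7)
The plan is to decompose the compound bijection into two pieces and invoke two fundamental facts about Zariski dense Anosov subgroups from the theory developed in \cite{LO}: (i) strict concavity of the growth indicator $\Phi_\rho$ on $\op{int}\cal L_\rho$, and (ii) uniqueness of $\PS$ measures for each $\Ga_\rho$-critical linear form (via the Bowen--Margulis--Sullivan correspondence). Fact (i) will give the correspondence $u\leftrightarrow\psi_u$, fact (ii) will give $\psi\leftrightarrow\nu_\psi$, and composing yields $u\leftrightarrow\nu_{\psi_u}$.

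For the bijection between unit vectors in $\op{int}\cal L_\rho$ and $\Ga_\rho$-critical linear forms, I would first recall from Quint's theory \cite{Qu1} that $\Phi_\rho$ is concave, upper semi-continuous on $\fa^+$, strictly positive on $\op{int}\cal L_\rho$, vanishes on $\partial\cal L_\rho\cap\fa^+$, and equals $-\infty$ on $\fa^+\setminus\cal L_\rho$. Given a unit $u\in\op{int}\cal L_\rho$, applying the supporting hyperplane theorem to the hypograph of $\Phi_\rho$ at $u$ produces a linear form $\psi_u\in\fa^*$ with $\psi_u(u)=\Phi_\rho(u)$ and $\psi_u\geq\Phi_\rho$ on $\fa^+$, which is exactly the $\Ga_\rho$-critical condition of Definition \ref{def.crit}. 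Uniqueness of this supporting $\psi_u$ at $u$ and injectivity of the map $u\mapsto\psi_u$ both rest on the strict concavity of $\Phi_\rho$ on $\op{int}\cal L_\rho$, the essential Anosov input supplied by \cite{LO}. For surjectivity, given a critical $\psi$ realizing $\psi(u_0)=\Phi_\rho(u_0)$ at some $u_0\in\fa^+\setminus\{0\}$, the inequality $\psi\geq\Phi_\rho$ with $\Phi_\rho(u_0)>-\infty$ forces $u_0\in\cal L_\rho$; a short argument using $\Phi_\rho\equiv 0$ on $\partial\cal L_\rho$ together with positivity of $\psi$ on $\cal L_\rho\setminus\{0\}$ (which follows from criticality combined with Lemma \ref{lem.H}) then places $u_0$ in $\op{int}\cal L_\rho$, so that $\psi=\psi_{u_0/\norm{u_0}}$.

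For the bijection between $\Ga_\rho$-critical linear forms and $\Ga_\rho$-$\PS$ measures, existence of a $(\Ga_\rho,\psi)$-$\PS$ measure for each critical $\psi$ is provided by Quint's generalization \cite{Qu2} of the Patterson--Sullivan construction on the Furstenberg boundary $G/P$, obtained as a weak-$*$ limit of normalized Poincar\'e-type sums at the critical exponent. For uniqueness, the standard device is to pair the candidate $\PS$ measure with an opposite $\PS$ measure, lift to a Bowen--Margulis--Sullivan measure on $\Ga_\rho\ba G$, and invoke the fact that for Zariski dense Anosov subgroups this measure is the unique $A$-invariant ergodic measure in its conformal class; this rules out two distinct $\PS$ measures for the same $\psi$. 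The main obstacle is precisely this pair of Anosov-specific inputs--strict concavity of $\Phi_\rho$ on $\op{int}\cal L_\rho$ and uniqueness of the $\PS$/BMS measure for a given critical $\psi$--since neither follows merely from convex cocompactness of the individual $\rho_i(\Ga)$ in $\PSL_2(\c)$ but rather from Zariski density of the self-joining $\Ga_\rho$ in $G$ combined with the Anosov property \eqref{eq.Q}, through the framework of \cite{LO}. Granting these two inputs, the lemma follows by assembling the supporting-hyperplane correspondence with the Patterson--Sullivan existence/uniqueness statement.
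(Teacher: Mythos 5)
You are attempting more than the paper does: the paper gives no argument for this lemma at all, quoting it verbatim from \cite{LO}, so any honest proof has to reconstruct the actual arguments of \cite{LO}. Your outline defers the right two inputs there, but the way you assemble them contains genuine gaps in the first bijection. First, you attribute the uniqueness of the supporting form $\psi_u$ at a fixed $u$ to strict concavity of $\Phi_\rho$ on $\op{int}\cal L_\rho$. Strict concavity only gives injectivity of $u\mapsto\psi_u$ (one linear form cannot be tangent along two distinct interior rays); uniqueness of the tangent form at a given ray is a differentiability statement, and a strictly concave $1$-homogeneous function may well have a ``corner'' at $u$ with a whole cone of supporting linear forms. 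So you need $C^1$-regularity of $\Phi_\rho$ on $\op{int}\cal L_\rho$ (or an independent argument, which is how this is actually handled for Anosov groups), and this is a nontrivial extra input you have not supplied. Second, your surjectivity argument rests on the claim, attributed to \cite{Qu1}, that $\Phi_\rho\equiv 0$ on $\partial\cal L_\rho\cap\fa^+$. Quint proves $\Phi_\rho\geq 0$ on $\cal L_\rho$, $\Phi_\rho>0$ on $\op{int}\cal L_\rho$ and $\Phi_\rho=-\infty$ off $\cal L_\rho$, but not vanishing on $\partial\cal L_\rho$; that claim is false in general (for a lattice $\Phi_\rho=2\rho>0$ on all of $\fa^+$, including $\partial\cal L_\rho\cap\fa^+$), and for self-joinings the boundary behaviour of $\Phi_\rho$ is precisely the delicate issue (cf.\ the tent property of \cite{KMO}), so it cannot be assumed. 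Likewise, positivity of an arbitrary critical form on $\cal L_\rho-\{0\}$ does not follow from ``criticality combined with Lemma~\ref{lem.H}'': that lemma only says that forms in $\op{int}\cal L_\rho^*$ can be rescaled to critical ones and that critical forms have $\delta_\psi=1$. The assertion that every critical form lies in $\op{int}\cal L_\rho^*$ and is tangent to $\Phi_\rho$ only at interior directions is essentially the surjectivity you are trying to prove, and it is one of the substantive Anosov-specific theorems of \cite{LO}; as written your argument for it is circular.

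For the second bijection, existence of a $(\Ga_\rho,\psi)$-$\PS$ measure for critical $\psi$ via \cite{Qu2} is correct. For uniqueness, however, ``the BMS measure is the unique $A$-invariant ergodic measure in its conformal class'' is not a statement you can point to, and you do not explain how uniqueness of $\nu_\psi$ would follow from it; the actual uniqueness proof for Anosov groups proceeds differently (shadow-lemma estimates and ergodicity of the $\Ga_\rho$-action with respect to products of $\PS$ measures, as in the results of \cite{LO} and \cite{LO2}). In effect this half of your proposal is a re-citation of \cite{LO}, which is consistent with what the paper itself does, but then the additional detail you supply in the first half needs the corrections above before it constitutes a proof.
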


\section{Properties of admissible torus packings} \label{sec:adm}

\subsection*{Notations}
We will be using the following notations throughout the paper:

For $z=(z_i)_{i=1}^d\in\bb C^d$, set
\begin{equation}\label{eq.nz}
n_z=\left(\smatr{1}{z_1}{0}{1}, \cdots, \smatr{1}{z_d}{0}{1}\right)\in G.
\end{equation}
We also define the following subgroups:
$$ \;\;  N=\{n_z:z\in
{
\bb C^d
}
\},\;\;  \check N=\{n_z^t:z\in 
{
\bb C^d
}
\},$$
$$K=\prod_{i=1}^d \op{PSU}(2) \;\; \text{and} \;\;   H=\prod_{i=1}^d \left(\op{PSU}(1,1)\cup \smatr{0}{1}{-1}{0} \op{PSU}(1,1)\right), $$
where 
$$\op{PSU}(2)=\{\smatr{a}{ b}{ -\bar b}{ \bar a}:|a|^2+|b|^2=1\} \text{ and } \op{PSU}(1,1)=\{\smatr{a}{ b}{ \bar b}{ \bar a}:|a|^2-|b|^2=1\} .$$

We  set $$M= \{ \left( \smatr{e^{i\theta_1}}{0}{0}{e^{-i\theta_1}},\cdots,  \smatr{e^{i\theta_d}}{0}{0}{e^{-i\theta_d}}\right) :\theta_1, \cdots, \theta_d\in \br\};$$
note that $M$ is equal to the centralizer of $A$ in $K$.

Let $\cal C$ denote the space of all circles in $\hat{\bb C}$ (recall that a  
 { union of line and $\{\infty\}$} is considered as a circle with infinite radius) and $\cal T=\cal C\times \cdots\times \cal C$ the space of all tori in $\prod_{i=1}^d\hat{\bb C}$.
{
Under the identification made in \eqref{eq.DFT}, we may consider a torus as an element of $\cal T$, and a torus packing with a subset of $\cal T$.
}

\subsection*{$H$-orbits corresponding to
admissible torus packings} Throughout the paper, we fix the following torus
$$
 T_0=(C_0,  \cdots , C_0 )\in \cal T
$$
where $C_0=\{|z|=1\}$ is the unit circle centered at the origin.
Note that 
$$H=\op{Stab}_G(T_0),\quad\text{and}\quad K=\op{Stab}_G(o).$$
{Since $G$ acts transitively on $\cal T$, we can endow $\cal T\simeq G/H$ with the quotient topology on $G/H$.
Similarly, the topology on $\cal C$ will be induced from $\op{PSL}(2,\bb C)/\op{PSU}(1,1)$.
}

 We call a torus $T=(C_1, \cdots, C_d)$ $\Ga_\rho$-admissible
if for each $1\le i\le d$,
\begin{itemize}
    \item
{ $\{\rho_i(\ga)C_i\in\cal C:\ga\in\Ga\}$} is a locally finite circle packing;
\item $f_i(C_1\cap \La_\Ga) =C_i \cap \La_{\rho_i(\Ga)} .$
\end{itemize}

\begin{Def}\label{in-ad2}  A torus packing $\cal P{\subset\cal T}$ is called $\Ga_\rho$-admissible if
\begin{itemize}
    \item 
$\cal P$ consists of finitely many $\Ga_\rho$-orbits of $\Ga_\rho$-admissible tori;\item 
$\cal P$ is locally finite in the sense that no infinite sequence of tori in $\cal P$ converges to a torus. 
\end{itemize}
\end{Def}

The following lemma is rather standard (see for instance \cite[Lem. 3.2]{OS}.)

\begin{lem}\label{equiv} The followings are equivalent:
\begin{enumerate}  
    \item The torus packing $\Ga_\rho T_0 {\subset \cal T}$ is locally finite;
    \item The inclusion map $f : \Ga_\rho\cap H\ba H \to \Ga_\rho\ba G$ is proper;
    \item $\Ga_\rho \ba \Ga_\rho H$ is closed in $\Ga_\rho\ba G$.
    
\end{enumerate}
\end{lem}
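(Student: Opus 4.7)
The plan is to identify the $G$-orbit of $T_0$ in $\cal T$ with the homogeneous space $G/H$ via the orbit map $gH\mapsto gT_0$, which is a homeomorphism onto a locally closed subset of $\cal T$ since $H=\op{Stab}_G(T_0)$ is closed in $G$ and $G$ acts smoothly on $\cal T$ as a Lie group. Under this identification, the packing $\Ga_\rho T_0\subset \cal T$ corresponds to the $\Ga_\rho$-orbit $\Ga_\rho H/H\subset G/H$, and the three conditions in the lemma translate into closedness and discreteness statements about this orbit in the various quotients.

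I would first handle (1)$\Leftrightarrow$(3). Local finiteness of $\Ga_\rho T_0$ in $\cal T$ is equivalent to the orbit $\Ga_\rho H/H$ being closed and discrete in $G/H$. Since the quotient maps $G\to G/H$ and $G\to \Ga_\rho\ba G$ are both open surjections and the set $\Ga_\rho H$ is invariant under both the right $H$-action and the left $\Ga_\rho$-action, closedness of $\Ga_\rho H/H$ in $G/H$, of $\Ga_\rho H$ in $G$, and of $\Ga_\rho\ba \Ga_\rho H$ in $\Ga_\rho\ba G$ are mutually equivalent. The automatic discreteness of $\Ga_\rho H/H$ in $G/H$ (given closedness) comes from the discreteness of $\Ga_\rho$ in $G$: if $\ga_n H\to \ga H$ in $G/H$ with $\ga_n\in\Ga_\rho$, lifting to $\ga_n h_n\to \ga$ in $G$ and invoking discreteness of $\ga^{-1}\Ga_\rho$ forces $\ga^{-1}\ga_n\in H$, i.e., $\ga_n\in \ga(\Ga_\rho\cap H)$, eventually.

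Next, for (2)$\Leftrightarrow$(3), the map $f$ is a continuous injection (injectivity follows from $\Ga_\rho h_1=\Ga_\rho h_2$ with $h_i\in H$ implying $h_1h_2^{-1}\in \Ga_\rho\cap H$). The direction (2)$\Rightarrow$(3) is immediate since proper continuous maps between locally compact Hausdorff spaces have closed image. For the converse, given a compact $K\subset \Ga_\rho\ba G$ and a sequence $(\Ga_\rho\cap H)h_n\in f^{-1}(K)$, I would lift to a compact $\tilde K\subset G$ and find $\ga_n\in \Ga_\rho$ with $\ga_n h_n\in \tilde K$; after passing to a subsequence, $\ga_n h_n\to g\in \tilde K$, and closedness of $\Ga_\rho H$ yields $g=\ga_0 h_0$. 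Then $\ga_0^{-1}\ga_n H=\ga_0^{-1}\ga_n h_n H\to h_0 H=H$ in $G/H$, and the discreteness of $\Ga_\rho H/H$ established in the previous step forces $\ga_0^{-1}\ga_n\in\Ga_\rho\cap H$ for large $n$. Consequently $(\Ga_\rho\cap H)h_n=(\Ga_\rho\cap H)\ga_0^{-1}\ga_n h_n\to (\Ga_\rho\cap H)h_0$ in $(\Ga_\rho\cap H)\ba H$, showing $f^{-1}(K)$ is sequentially, hence genuinely, compact.

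The main technical hurdle is the assertion that closedness of $\Ga_\rho H$ in $G$ forces discreteness of the orbit $\Ga_\rho H/H$ in $G/H$; once this is in hand, both equivalences fall out formally from the discreteness of $\Ga_\rho$, the closedness of $H$, and the openness of the relevant quotient maps. The proof of that discreteness is standard for discrete subgroups of Lie groups with closed subgroups, and is the step I would treat most carefully, after which the rest of the argument proceeds by routine lifting.
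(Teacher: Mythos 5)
Your overall route---identifying the space of tori with $G/H$ via the orbit map (in fact $G$ acts transitively on $\cal T$, so the identification is onto all of $\cal T$, not just a locally closed subset), translating local finiteness of $\Ga_\rho T_0$ into ``$\Ga_\rho H/H$ is closed and discrete in $G/H$'', and then passing between $G/H$, $G$ and $\Ga_\rho\ba G$---is the standard one that the paper has in mind when it calls the lemma standard and cites \cite[Lem.~3.2]{OS}. However, there is a genuine gap at precisely the step you flag as the crux, and the justification you give for it does not work. You assert that if $\ga_n H\to\ga H$ with $\ga_n,\ga\in\Ga_\rho$, then lifting to $\ga_n h_n\to\ga$ and ``invoking discreteness of $\ga^{-1}\Ga_\rho$'' forces $\ga^{-1}\ga_n\in H$ eventually. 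Writing $\delta_n=\ga^{-1}\ga_n$, all you know is $\delta_n h_n\to e$ with $h_n\in H$. Unless the $h_n$ are bounded (which you cannot assume: if they were, discreteness of $\Ga_\rho$ would settle everything immediately), the $\delta_n$ need not converge at all---they can escape to infinity while $h_n^{-1}$ tracks them---so discreteness of $\Ga_\rho$ gives nothing. Ruling out exactly this scenario ($\delta_n h_n\to e$ with $\delta_n\notin H$ and $h_n\to\infty$) is, given closedness, equivalent to the properness in (2); so as written the argument is a non sequitur, and since your proof of (3)$\Rightarrow$(2) and of (3)$\Rightarrow$(1) both lean on this unproved discreteness, the gap propagates through the whole lemma.

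The claim ``$\Ga_\rho H$ closed $\Rightarrow$ $\Ga_\rho H/H$ discrete'' is true, but it needs an input beyond discreteness of $\Ga_\rho$, namely the assumed closedness together with a Baire-category (or similar) argument: $S=\Ga_\rho H/H$ is countable (as $\Ga_\rho$ is countable) and closed in the manifold $G/H$, hence is itself a locally compact Hausdorff, so Baire, space; being a countable union of singletons, it must have an isolated point, and since the left $\Ga_\rho$-action permutes $S$ transitively by homeomorphisms of $G/H$, every point of $S$ is then isolated. (Equivalently: a nonempty perfect closed subset of $G/H$ would be uncountable.) With this substitution, the rest of your argument is correct and routine: the equivalence of closedness of $\Ga_\rho H/H$ in $G/H$, of $\Ga_\rho H$ in $G$, and of $\Ga_\rho\ba\Ga_\rho H$ in $\Ga_\rho\ba G$ via openness of the quotient maps; local finiteness $\Leftrightarrow$ closed and discrete; properness $\Rightarrow$ closed image; and your lifting argument for (3)$\Rightarrow$(2), which then legitimately uses the discreteness just established.
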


 \begin{prop} \label{prop.UP}\label{Bounded}
If $\cal P=\Ga_\rho T_0$ is $\Ga_\rho$-admissible, then for any bounded subset $\cal O\subset\Ga_\rho\ba G$, the subset 
\be \label{bdd} 
\{[h]\in \Ga_\rho\cap H\ba H : [h] A^+\cap \cal O \neq\emptyset\}
\ee 
is bounded.
\end{prop}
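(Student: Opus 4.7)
The plan is to argue by contradiction. Suppose there exist sequences $[h_n]\in\Ga_\rho\cap H\backslash H$ escaping to infinity and $a_n\in A^+$ with $[h_na_n]\in\cal O$. Choose a precompact lift $\widetilde{\cal O}\subset G$ of $\cal O$ and $\gamma_n\in\Ga_\rho$ with $g_n:=\gamma_nh_na_n\in\widetilde{\cal O}$; passing to a subsequence, $g_n\to g_\infty$. If $\{a_n\}$ is bounded in $A^+$, then $\gamma_nh_n=g_na_n^{-1}$ is bounded in $G$, placing $\{[h_n]\}$ in a bounded subset of $\Ga_\rho\backslash G$; since admissibility of $\cal P=\Ga_\rho T_0$ includes local finiteness, Lemma \ref{equiv} ensures $f:\Ga_\rho\cap H\backslash H\to\Ga_\rho\backslash G$ is proper, and we conclude that $[h_n]$ is bounded in $\Ga_\rho\cap H\backslash H$, a contradiction. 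So we may assume $a_n=a_{t_n}$ with $t_n\in\br_{\geq 0}^d$, $\|t_n\|\to\infty$, and $t_n/\|t_n\|\to u$ for some unit $u\in\br_{\geq 0}^d$.

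The identity $\gamma_nh_n\cdot o=g_n\cdot a_{t_n}^{-1}o$ together with the explicit formula $a_{t_n}^{-1}\cdot o=(0,e^{-t_n^i})_i\in\prod_i\bH^3$ forces $\gamma_nh_n\cdot o\to g_\infty\cdot(0,\ldots,0)\in\hat\c^d$ in the coordinates where $t_n^i\to\infty$ (with the obvious modification in coordinates where $t_n^i$ stays bounded). On the other hand, since $h_n\in H$ stabilizes $T_0$, each coordinate $h_n^i\cdot o$ lies on the totally geodesic 2-plane in $\bH^3$ bounded by $C_0$; thus $h_n\cdot o$ lies in the product of these 2-planes, whose ideal (Furstenberg) boundary is $C_0^d\subset\hat\c^d$. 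Passing to a further subsequence so that either $\gamma_n$ stabilizes at a constant $\gamma\in\Ga_\rho$ or diverges in $\Ga_\rho$, one identifies the accumulation point of $h_n\cdot o$ as a point $\xi\in C_0^d$ prescribed by $g_\infty\cdot(0,\ldots,0)$ up to $\Ga_\rho$-translation. The Anosov property of $\Ga_\rho$ then forces $\xi\in\Lambda_\rho\cap C_0^d$: otherwise the trajectory $\gamma_nh_na_{t_n}\cdot o$ would escape every bounded neighborhood of the convex core of $\Ga_\rho\backslash\prod_i\bH^3$, contradicting $g_n\in\widetilde{\cal O}$.

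Finally, the admissibility condition $f_i(C_0\cap\Lambda_\Ga)=C_0\cap\Lambda_{\rho_i(\Ga)}$ identifies $\Lambda_\rho\cap C_0^d$ with the graph $\{(f_1(\eta),\ldots,f_d(\eta)):\eta\in C_0\cap\Lambda_\Ga\}$, which by $\rho_i$-equivariance of the boundary maps is precisely the limit set of $\Ga_\rho\cap H$ acting on the product of 2-planes. The convex cocompactness of each $\rho_i(\Ga)$ then yields convex cocompactness of this action, so $h_n\cdot o$ accumulating at $\xi$ in this limit set forces $h_n\cdot o$ to remain within bounded distance of a $(\Ga_\rho\cap H)$-orbit; hence $[h_n]$ is bounded in $\Ga_\rho\cap H\backslash H$, contradicting our original assumption. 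The main obstacle is this last step: the identification of $\Lambda_{\Ga_\rho\cap H}$ with the graph prescribed by admissibility, and the resulting convex cocompactness of the induced action of $\Ga_\rho\cap H$ on the product of 2-planes. Both rely essentially on condition (2) of admissibility and the $\rho_i$-equivariance of the boundary maps; without these, the limit-set analysis of the accumulation point $\xi$ cannot be closed.
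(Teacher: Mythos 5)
Your opening moves (contradiction, lifting to $G$, and disposing of the case of bounded $a_n$ via Lemma \ref{equiv}) are fine and parallel the paper, but the core of the argument has genuine gaps, and its final inference is false as stated. Knowing that $h_n\cdot o$ accumulates at a point $\xi$ of the limit set of $\Gamma_\rho\cap H$ does \emph{not} force $h_n\cdot o$ to stay within bounded distance of a $(\Gamma_\rho\cap H)$-orbit: a sequence can converge to a limit point of a convex cocompact group while escaping to infinity in the quotient (e.g.\ approaching the limit point horocyclically, far from the convex hull). What would make the approach conical is the fact, unused in your last step, that $\gamma_n h_n a_{t_n}$ stays in a fixed compact set; but exploiting this requires controlling points of the form $h\,a_{t}$ with $a_t\in A^+\setminus H$ modulo $\Gamma_\rho\cap H$, which is essentially the statement of Proposition \ref{prop.UP} itself, so the argument becomes circular. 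Moreover, the identification $\Lambda_\rho\cap C_0^d=\Lambda_{\Gamma_\rho\cap H}$ together with ``convex cocompactness'' of $\Gamma_\rho\cap H$ acting on the product of planes is asserted, not proved: this is the content of the separate unnumbered proposition in Section 3 (proved there via conicality of Anosov limit points plus properness, and under the hypothesis $\#(T\cap\Lambda_\rho)\ge 3$), and it can degenerate when $C_0\cap\Lambda_\Gamma$ has at most two points or the stabilizer is elementary -- cases the present proposition must still cover. Finally, the step placing $\xi$ in $\Lambda_\rho$ via ``the Anosov property / convex core'' is vague in this higher-rank setting and breaks down when $\gamma_n$ is eventually constant; also only those coordinates in which $h_{n,j}\to\infty$ produce boundary limits, so ``$\xi\in C_0^d$'' is not automatic, and when $\gamma_n\to\infty$ the phrase ``prescribed by $g_\infty\cdot(0,\dots,0)$ up to $\Gamma_\rho$-translation'' has no meaning.

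The paper's proof supplies precisely the mechanism your sketch lacks. One normalizes the representative so that the first coordinate $h_{n,1}o$ lies in a Dirichlet fundamental domain $\mathsf D$ for $\Gamma\cap\mathrm{Stab}_{\PSL_2(\mathbb{C})}(C_0)$ acting on the convex hull $\widehat C_0$; local finiteness of $\Gamma C_0$ and convex cocompactness of $\Gamma$ give $\partial\mathsf D\cap\Lambda_\Gamma=\emptyset$ \cite{OS3}. Divergence of $[h_n]$ then forces some factor $h_{n,\ell}\to\infty$ in $\mathrm{Stab}(C_0)$; boundedness of $\rho_j(g_n)h_{n,j}a_{t_{n,j}}$ shows the limits $\xi_j=f_j(\xi_1)$ lie in $\Lambda_{\rho_j(\Gamma)}$; divergence of the $\ell$-th factor puts $\xi_\ell\in C_0$; and the admissibility condition $f_\ell(C_0\cap\Lambda_\Gamma)=C_0\cap\Lambda_{\rho_\ell(\Gamma)}$ transfers this back to $\xi_1\in C_0\cap\Lambda_\Gamma$, contradicting $\xi_1\in\partial\mathsf D$. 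Note where condition (2) of admissibility is really used: to transfer degeneration from the divergent factor back to the first factor, not merely to describe $\Lambda_\rho\cap C_0^d$ as a graph. To salvage your route you would first have to prove the limit-set identification and a conical-approach statement for $\Gamma_\rho\cap H$, i.e.\ essentially the Section 3 proposition, which is a longer detour than the paper's direct argument.
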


\begin{proof}
Suppose not. Then there exist sequences $g_i\in \Ga$, $(h_{i,1}, \cdots, h_{i,d})\in H$, and 
$(t_{i,1}, \cdots, t_{i, d})\in \frak a^+$ such that ${(\Ga_\rho\cap H)}(h_{i,1},\cdots,h_{i,d})\to\infty$ in $\Ga_\rho\cap H \ba H$ as $i\to\infty$ and for each $1\le j\le d$, 
\begin{equation}\label{eq.ij}
    s_{i,j}:=\rho_j(g_i) h_{i,j}\smatr{e^{t_{i,j}/2}}{0}{0}{e^{-t_{i,j}/2}}
\end{equation}
is a bounded sequence {in $\op{PSL}(2,\bb C)$}.

Let  $H_0=\op{Stab}_{\PSL_2(\c)}(C_0)$ and 
$\mathsf D$
be a Dirichlet fundamental domain for the action of $\Ga\cap H_0$ on the convex hull $\widehat C_0\subset \bH^3$ of $C_0$.
By the admissibility hypothesis,  $\Ga C_0$ is a locally finite circle packing.
Hence the inclusion map $\Ga\cap H_0\ba \widehat C_0\to \Ga\ba \bH^3$ is
a proper map.
Since $\Ga$ is \Conv, it follows that $\partial  
\mathsf D
\cap \La_{\Ga}=\emptyset$ \cite[Prop. 5.1]{OS3} {where $\partial D:=D\cap C_0\subset\hat{\bb C}$ denotes the boundary at infinity of $D$}.

By replacing $h_{i,1}$ with an element of $(\Ga\cap H_0)h_{i,1}$ and modifying $g_i$ if necessary, we may assume that $h_{i,1}o\in  
\mathsf D
$.
Since ${(\Ga_\rho\cap H)}(h_{i,1},\cdots,h_{i,d})\to\infty$ in $\Ga_\rho\cap H \ba H$ as $i\to\infty$, we must have $h_{i,\ell}\to\infty$ in $H_0$ for some $1\leq\ell\leq d$.
By \eqref{eq.ij} and by the assumption that the sequence
$\{s_{i,j}:i=1,2, \cdots \}$ is bounded for each $1\le j\le d$,
 we have 
\begin{equation}\label{eq.ijo}
\xi_j:=\lim_{i\rightarrow\infty}h_{i,j}\smatr{e^{t_{i,j}/2}}{0}{0}{e^{-t_{i,j}/2}}o=\lim_{i\rightarrow\infty}\rho_j(g_i^{-1})s_{i,j}o \in  \La_{\rho_j(\Ga)}.
\end{equation}
It follows from the $\rho_j$-equivariance of $f_j$
 that
 $\xi_j =f_j (\xi_1)$ for each $1\le j\le d$.
{
We will need the following general fact from hyperbolic geometry: 
\textit{for any sequence $h_i\in H_0$ and $t_i\geq 0$ $(i\in\bb N)$, the sequence
\begin{equation}\label{eq.seq}
\left\{h_{i}\smatr{e^{t_{i}/2}}{0}{0}{e^{-t_{i}/2}}o\in\bb H^3:i\in\bb N\right\}    
\end{equation}
accumulates on $C_0$ if and only if $\{h_i\in H_0:i\in\bb N\}$ is unbounded. 
In this case, \eqref{eq.seq} shares the same limit point with $\{h_{i}o\in\bb H^3:i\in\bb N\}$ along any of its convergent subsequence.
}

Now,
} 
since $h_{i,\ell}\to\infty$, 
it follows from \eqref{eq.ijo} {and the above fact} that $\xi_\ell\in C_0\cap\La_{\rho_\ell(\Ga)}$.

Since
$C_0\cap \La_\Ga =f_\ell^{-1} (C_0 \cap \La_{\rho_\ell(\Ga)})$ by the assumption that $\scrP$ is $\Ga_{\rho}$-admissible, we have $\xi_1=f_\ell^{-1}(\xi_\ell) \in C_0\cap\La_{\Ga}$.
{
By \eqref{eq.ijo} and the previous fact from hyperbolic geometry, this implies that $h_{i,1}$ is unbounded and $h_{i,1}o\to\xi$ as $i\to\infty$.}
On the other hand, since $h_{i,1}o\in  
\mathsf D
$, we have $\xi_1\in \partial 
\mathsf D
$.
Hence $\xi_1\in \partial 
\mathsf D
\cap \La_\Ga$;
 this yields a contradiction since $\partial  
\mathsf D
 \cap\La_{\Ga}=\emptyset$.
\end{proof}

\begin{prop}\label{lem.supp}
If $\cal P=\Ga_\rho T_0$ is $\Ga_\rho$-admissible, then the following hold:
\begin{enumerate}
    \item 
    the set
    $$\{[h]\in \Ga_\rho\cap H \ba H : h P\in \La_\rho \}$$
is compact;
    \item
    for any bounded subset $S\subset G$ and any closed cone $\cal E\subset \frak a^+$
such that $\cal E\cap \L_\rho=\{0\}$, we have
$$\# (\big(H\ba H\Ga_\rho)\cap (H \ba H
\exp(\cal E)S  \big))<\infty.$$
\end{enumerate}

\end{prop}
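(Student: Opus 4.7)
For part (1), the set is closed in $(\Gamma_\rho\cap H)\backslash H$ since $\Lambda_\rho\subset\cal F$ is closed, the evaluation map $h\mapsto hP$ is continuous, and the defining condition is $(\Gamma_\rho\cap H)$-invariant. For boundedness, the plan is to reduce to Proposition~\ref{Bounded}: it suffices to produce a single compact $\mathcal{O}\subset\Gamma_\rho\backslash G$ such that $[h]A^+\cap\mathcal{O}\neq\emptyset$ whenever $hP\in\Lambda_\rho$. Choose any $u\in\op{int}\mathcal{L}_\rho$; by the conicality of limit points for convex cocompact (Anosov) subgroups, for any $\xi=hP\in\Lambda_\rho$ there exist $\gamma_n\in\Gamma_\rho$ and $t_n\to\infty$ in $\br_{\ge 0}u$ such that $\gamma_n^{-1}h a_{t_n}$ lies in a fixed compact subset of $G$ depending only on $u$. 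Then $[h]a_{t_n}$ lies in the corresponding compact $\mathcal{O}\subset \Gamma_\rho\ba G$, and Proposition~\ref{Bounded} supplies the uniform bound.

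For part (2), I argue by contradiction: suppose there are infinitely many pairwise distinct cosets $H\gamma_n$ with $\gamma_n=h_n\exp(X_n)s_n\in\Gamma_\rho$, $h_n\in H$, $X_n\in\mathcal{E}$ and $s_n\in S$. Under the bijection $H\backslash G\cong\cal T$, $Hg\mapsto g^{-1}T_0$, these cosets correspond to pairwise distinct tori
\[
T_n:=\gamma_n^{-1}T_0=s_n^{-1}\exp(-X_n)T_0\in \cal P,
\]
since $h_n^{-1}\in H$ fixes $T_0$. Writing $\gamma_n=(\rho_1(g_n),\ldots,\rho_d(g_n))$, the $i$-th circle of $T_n$ is the image of the circle of radius $e^{-X_{n,i}}$ centered at $0$ under the bounded M\"obius transformation $s_{n,i}^{-1}$; apart from a lower-dimensional exceptional locus of $s\in S$ where $s_i^{-1}$ maps $0$ to $\infty$ (which by discreteness of $\Gamma_\rho$ and local finiteness of $\cal P$ contributes only finitely many tori and may be handled separately), this image is a genuine circle with bounded Euclidean center and radius $\asymp e^{-X_{n,i}}$. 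Since $\rho_i(g_n)^{-1}o$ is the apex of the hyperbolic hemisphere bounded by this circle, an elementary computation in the upper half-space model of $\bH^3$ yields the key uniform estimate
\[
\mu_i(\gamma_n)=\mathsf d(\rho_i(g_n)o,o)=X_{n,i}+O(1).
\]

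Consequently $\mu(\gamma_n)\in\mathcal{E}+B_{C}(0)$ for some constant $C>0$, where $B_C(0)\subset\mathfrak a$ is the Euclidean ball of radius $C$. On the other hand, the Anosov property of $\Gamma_\rho$ yields a constant $C'>0$ such that $\mu(\gamma)$ lies within Euclidean distance $C'$ of $\mathcal{L}_\rho$ for every $\gamma\in\Gamma_\rho$. Since $\mathcal{E}$ and $\mathcal{L}_\rho$ are closed cones in $\mathfrak a^+$ meeting only at the origin, the intersection of their bounded neighborhoods is bounded in $\mathfrak a$; hence $\mu(\gamma_n)$ is bounded, and by the discreteness of $\Gamma_\rho$ in $G$ only finitely many $\gamma_n$ are possible, contradicting the assumption. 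The main technical obstacle is the Cartan-projection estimate $\mu_i(\gamma_n)=X_{n,i}+O(1)$: although the $H$-factor $h_n$ in the decomposition $\gamma_n=h_n\exp(X_n)s_n$ may be arbitrarily large (so $\mu(\gamma_n)$ cannot be compared with $\mu(\exp(X_n)s_n)$ by any left-invariance argument), the torus identity $T_n=s_n^{-1}\exp(-X_n)T_0$ intrinsically pins $\rho_i(g_n)^{-1}o$ to the apex of a hemisphere whose Euclidean data we control, bypassing $h_n$ entirely.
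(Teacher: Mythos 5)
Your part (1) follows the paper's own route: closedness of the set plus a reduction to Proposition \ref{Bounded}, using the \emph{uniform} conicality of $\La_\rho$ (a fixed compact set independent of the limit point); that uniform statement is exactly Lemma \ref{lem.con} (\cite[Prop. 7.4]{LO}), so quoting it is legitimate and this part is fine.

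Part (2), however, breaks at the step you yourself single out as the crux. From $\gamma_n=h_n\exp(X_n)s_n$ one gets $\rho_i(g_n)^{-1}o=s_{n,i}^{-1}\exp(-X_{n,i})\,(h_{n,i}^{-1}o)$. The point $h_{n,i}^{-1}o$ lies on the unit hemisphere over $C_0$, but it is the apex only when $h_{n,i}$ fixes $o$; if $h_{n,i}$ is large it has height $r_{n,i}$ arbitrarily close to $0$. Since $\exp(-X_{n,i})$ acts on the upper half-space as scaling by $e^{-X_{n,i}}$ and $s_{n,i}^{-1}$ distorts heights only boundedly, one finds $\mathsf d(\rho_i(g_n)o,o)=X_{n,i}+\log(1/r_{n,i})+O(1)$, i.e. $X_{n,i}+\mathsf d(h_{n,i}o,o)+O(1)$ up to bounded error, \emph{not} $X_{n,i}+O(1)$. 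Isometries carry the geodesic plane over a circle to the geodesic plane over the image circle, but they do not carry apex to apex, so the identity $T_n=s_n^{-1}\exp(-X_n)T_0$ pins down only the plane containing $\rho_i(g_n)^{-1}o$, not its position on that plane; your claim to "bypass $h_n$ entirely" is precisely where the argument fails. A telling symptom is that your proof of (2) would use only local finiteness of $\cal P$ and never the boundary-map condition in admissibility, whereas that condition is exactly what the paper needs here: since $\Ga_\rho h_n\exp(X_n)=\Ga_\rho\gamma_n s_n^{-1}$ lies in the bounded image of $S^{-1}$ in $\Ga_\rho\ba G$ and $X_n\in\cal E\subset\fa^+$, Proposition \ref{Bounded} (whose proof uses admissibility) lets one replace $h_n$ by a bounded representative $\tilde h_n=\delta_n h_n$ with $\delta_n\in\Ga_\rho\cap H$; only then is $\mu(\delta_n\gamma_n)$ within bounded distance of $X_n$, and the asymptotic-cone property of $\cal L_\rho$ gives the contradiction with $\cal E\cap\cal L_\rho=\{0\}$. (Your stronger assertion that $\mu(\gamma)$ lies within a uniformly bounded distance of $\cal L_\rho$ for all $\gamma\in\Ga_\rho$ is also left unjustified, but it is not needed once the $H$-component is controlled; without that control, no such global statement rescues the estimate $\mu_i(\gamma_n)=X_{n,i}+O(1)$.)
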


To prove the proposition we will use the following lemma, {which is equivalent to \cite[Prop. 7.4]{LO} in view of the characterization of the limit cone $\cal L_\rho$ as an asymptotic cone of $\{\mu(\ga):\ga\in\Ga_\rho\}$ given in \cite[Thm. 1.2]{B}}.
\begin{lem}[Uniform conicality of $\La_\rho$] \cite[Prop. 7.4]{LO} \label{lem.con}
There exists a compact subset $\cal Q\subset G$ such that the following holds: for any $g\in G$ with $g P\in\La_\rho$ and any closed
{convex} 
cone $\cal D\subset\op{int}\frak a^+{\cup\{0\}}$ whose interior contains $\cal L_\rho-\{0\}$, we can find sequences $\ga_i\in\Ga_\rho$ and $\log a_i\to\infty$ in $\cal D$ such that 
$$\ga_i g a_i\in \cal Q \quad \text{ for all $i\ge 1$}. $$
\end{lem}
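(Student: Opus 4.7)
The plan is to combine the Anosov property of $\Ga_\rho$ with the KAK decomposition of $G$ to construct, for each $\xi \in \La_\rho$ and $g$ with $gP=\xi$, the sequences $\ga_i \in \Ga_\rho$ and $a_i\in A$ explicitly. Using $G = KP$, I write $g = k_g p_g$ with $k_g \in K$ and $p_g = m_g a_g n_g \in P = MAN$, so $gP = k_g P = \xi$. The factor $p_g$ will be absorbed by shifting $a_i$ by the fixed element $a_g^{-1}$.

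\textbf{Choice of the approaching sequence.} Since $\xi \in \La_\rho$, there exists a sequence $\ga_i \in \Ga_\rho$ with $\ga_i^{-1} P \to \xi$ in $\cal F = K/M$. The Anosov property \eqref{eq.Q} forces $\mu(\ga_i^{-1}) \to \infty$ in $\op{int}\fa^+$ while keeping $\mu(\ga_i^{-1})$ inside a tubular neighborhood of $\cal L_\rho$. Since $\cal L_\rho-\{0\} \subset \op{int}\cal D$ for the given closed cone $\cal D$, this forces $\mu(\ga_i^{-1}) \in \cal D$ for all sufficiently large $i$; after discarding finitely many terms we may assume this holds for all $i$.

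\textbf{KAK computation.} Write $\ga_i^{-1} = k_i a_{\mu_i} \ell_i$ in KAK with $\mu_i := \mu(\ga_i^{-1})$; passing to a subsequence, $k_i \to k$ and $\ell_i \to \ell$ in $K$. The condition $\ga_i^{-1} P \to k_g P$ in $\cal F$ forces $kP = k_g P$, hence $k = k_g m_0$ for some $m_0 \in M$, since $K \cap P = M$. Set $a_i := a_g^{-1} a_{\mu_i}$; then $\log a_i = \mu_i - \log a_g \in \cal D$ for all large $i$. Inverting and using the commutativity of $M$ with $A$, a direct manipulation yields
\[
\ga_i g a_i \,=\, \ell_i^{-1} m_0^{-1}\, \bigl(a_{-\mu_i}\zeta_i a_{\mu_i}\bigr)\, m_g\, \bigl(a_{-\mu_i} n_g' a_{\mu_i}\bigr),
\]
where $\zeta_i := m_0 k_i^{-1} k_g \to e$ in $K$ and $n_g' := a_g n_g a_g^{-1} \in N$. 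The last factor tends to $e$ by the contraction of $N$ under $\op{Ad}(a_{-\mu_i})$ as $\mu_i \to \infty$ in $\op{int}\fa^+$. For the middle factor, the Anosov property ensures that $\zeta_i \to e$ in $K$ at an exponential rate compatible with the expansion of $\op{Ad}(a_{-\mu_i})$ on the stable horospherical direction $\fn^-$, so $a_{-\mu_i}\zeta_i a_{\mu_i}$ stays bounded. Hence $\ga_i g a_i$ lies in a fixed compact subset of $G$ for all sufficiently large $i$.

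\textbf{Conclusion and main difficulty.} Take $\cal Q$ to be any fixed compact neighborhood of $K\cdot M\cdot\{\text{bounded region}\}$ large enough to contain all such limits (noting $m_g\in M$ is automatically bounded, and the bounded region comes from the uniform Anosov rate); this choice is independent of $\xi$, $g$, and $\cal D$. Then $\ga_i g a_i \in \cal Q$ for all sufficiently large $i$, and a further truncation gives the required sequences with $\log a_i \to \infty$ in $\cal D$. The principal technical difficulty is the quantitative matching of the exponential convergence rate $k_i \to k_g m_0$ with the expansion rate of $\op{Ad}(a_{-\mu_i})$ along $\fn^-$; this balance is precisely the content of the Anosov property \eqref{eq.Q} translated into the KAK coordinates, and would fail for a general Zariski dense discrete subgroup.
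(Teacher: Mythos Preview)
The paper does not prove this lemma; it is quoted from \cite[Prop.~7.4]{LO}. So there is no in-paper proof to compare against, and the question is simply whether your argument stands on its own.

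There is a genuine gap at the step you yourself flag as the ``principal technical difficulty.'' You assert that ``the Anosov property ensures that $\zeta_i \to e$ in $K$ at an exponential rate compatible with the expansion of $\op{Ad}(a_{-\mu_i})$ on $\fn^-$,'' but you never prove this, and the inequality \eqref{eq.Q} you cite (a lower bound on displacement by word length) does not by itself yield any convergence rate for the $K$-factor in the KAK decomposition of $\ga_i^{-1}$. Without this rate, the term $a_{-\mu_i}\zeta_i a_{\mu_i}$ need not stay bounded, and the whole argument collapses. This missing estimate is precisely the content of uniform conicality, so at this point the argument is circular: you are invoking what you are trying to prove.

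A related issue is your choice of approaching sequence. You take \emph{any} $\ga_i\in\Ga_\rho$ with $\ga_i^{-1}P\to\xi$; but for a generic such sequence there is no reason the KAK angle $k_i$ should converge to $k_g m_0$ at the correct rate (indeed, for general Zariski dense groups it will not). The actual proofs of conicality for Anosov subgroups (as in \cite{LO}, \cite{KLP1}) do not pick an arbitrary sequence: they use either the Morse lemma for quasi-geodesics in the symmetric space, or the exponential contraction built into the original Anosov definition along a geodesic ray in the word-hyperbolic group $\Ga$ tending to the boundary point corresponding to $\xi$. Either route supplies the quantitative matching you need and simultaneously gives the uniformity of $\cal Q$ in $g$. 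Your sketch gestures at this but does not carry it out.
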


\medskip 
\subsection*{Proof of Proposition \ref{lem.supp}}
Let $\cal Q\subset G$ be as in Lemma \ref{lem.con}.
Choose any closed {convex} cone $\cal D\subset\inte\frak a^+{\cup\{0\}}$ whose interior contains $\cal L_\rho-\{0\}$.
Since the inclusion map $\Ga_\rho\cap H\ba H\to \Ga_\rho\ba G$
is a proper map,
 Lemma \ref{lem.con} implies that 
 $$\{[h]\in \Ga_\rho\cap H \ba H : h P\in \La_\rho \}\subset \{[h]\in \Ga_\rho\cap H\ba H : [h] \exp \cal D \cap \cal Q \neq\emptyset\}.$$

By Proposition \ref{Bounded}, the subset on the right-hand side is bounded. Therefore (1) follows.

Suppose (2) is false. Then there exists a bounded subset $S\subset G$ and infinite sequences $t_i\in \cal E$, $t_i\rightarrow\infty$, $\ga_i\in  \Ga_\rho$, $ h_i\in  H$, and $s_i\in S
$ such that
$$\ga_i=h_i 
a_{t_i}s_i, $$
and $ H\ga_i\neq H\ga_j$ for $i\neq j$.
Since the image of $\ga_i^{-1}h_ia_{t_i}=s_i^{-1}\in S^{-1}$ under the projection $G\to \Ga_\rho\ba G$ is bounded, it follows again from Proposition \ref{Bounded} that there exists a sequence $\delta_i\in \Ga_\rho\cap H $ such that the sequence $\tilde h_i:=\delta_i h_i$ is bounded.
Set $\tilde \ga_i:=\delta_i \ga_i$.
Note that $H\tilde \ga_i=H \ga_i$ and $\tilde \ga_i =\tilde h_i a_{t_i}s_i\in \Ga_\rho$.
Since both $\tilde h_i $ and $s_i$ are bounded, the sequences $t_i$ and $\mu(\tilde \ga_i)$ are within bounded distance of each other. Now using the fact that $\cal L_\rho$ is the asymptotic cone of $\{\mu(\ga):\ga\in
\Ga_\rho \}$, and $\cal E\cap\cal L_\rho=\{0\}$, we have $t_i\not\in\cal E$ for all sufficiently large $i$, which is a contradiction.

\subsection*{Closedness of  $\Ga_\rho T_0$}
The following proposition says that local finiteness of  
 $\Ga_\rho T_0 { \subset\cal T}$
is a consequence of the local finiteness of $\Ga C_0 {\subset \cal C}$ when $T_0$ is an admissible torus with $\# (C_0\cap \La_\Ga) \ge 3$. 
\begin{prop}\label{ccc}
Let $\Ga C_0$ be closed in $\cal C$ with $\# (C_0\cap \La_\Ga) \ge 3$. 
If $f_i(C_0\cap \La_\Ga) =C_0 \cap \La_{\rho_i(\Ga)}$ for each $1\le i\le d$, then
 $\Ga_\rho T_0$ is closed in $\cal T$ and $\rho_i(\Ga) C_0$ is closed in $\cal C$ for all $2\le i\le d$.
\end{prop}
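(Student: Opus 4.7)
The plan is to reformulate the statement as closedness of group orbits in Lie groups: identifying $\cal C\simeq \PSL_2(\c)/H_0$ with $H_0:=\op{Stab}_{\PSL_2(\c)}(C_0)$ and $\cal T\simeq G/H$ with $H=H_0^d$, the hypothesis reads that $\Ga H_0$ is closed in $\PSL_2(\c)$, and the two conclusions read that $\Ga_\rho H$ is closed in $G$ and each $\rho_i(\Ga)H_0$ is closed in $\PSL_2(\c)$. By the argument of Lemma~\ref{equiv} applied in each of these settings, closedness is equivalent to local finiteness of the corresponding circle or torus packing, so any convergent sequence of distinct orbital elements is eventually constant.

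The first step is a stabilizer identification via the principle that three points determine a circle: I claim $\rho_i(\Ga\cap H_0)=\rho_i(\Ga)\cap H_0$ for each $i$. The forward inclusion is immediate. Conversely, if $\rho_i(\g)\in H_0$, then $\rho_i(\g)$ permutes $C_0\cap\La_{\rho_i(\Ga)}=f_i(C_0\cap\La_\Ga)$; by $f_i$-equivariance, $\g$ permutes $C_0\cap\La_\Ga$, a set of $\geq 3$ points, so $\g C_0$ is a circle sharing three points with $C_0$ and hence equals $C_0$. Consequently, $\rho$ descends to natural bijections between $\Ga C_0$, each $\rho_i(\Ga) C_0$, and $\Ga_\rho T_0$.

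For the closedness of $\Ga_\rho T_0$, I would take $\rho(\g_n)T_0\to T'=(C_1',\ldots,C_d')$ in $\cal T$ and project to the first coordinate to obtain $\g_n C_0\to C_1'$; by local finiteness of $\Ga C_0$, $\g_n C_0$ is eventually equal to $C_1'=\g_0 C_0$ for some $\g_0\in\Ga$, so $\g_n=\g_0 h_n$ with $h_n\in\Ga\cap H_0$ for all large $n$. The stabilizer identification then gives $\rho_i(h_n)\in H_0$ for every $i$, hence $\rho(\g_n)T_0=\rho(\g_0)T_0$ and $T'=\rho(\g_0)T_0\in\Ga_\rho T_0$.

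For the closedness of each $\rho_i(\Ga)C_0$ with $i\ge 2$, given $\rho_i(\g_n)C_0\to C$ I would lift to a convergent subsequence in $\cal T$. The three points $\rho_i(\g_n)f_i(\xi_j)=f_i(\g_n\xi_j)$, with $\xi_j\in C_0\cap\La_\Ga$ distinct, lie on $\rho_i(\g_n)C_0$ and accumulate, along a subsequence, on $C\cap\La_{\rho_i(\Ga)}$; applying $f_i^{-1}$ shows $\g_n\xi_j$ accumulates on three points of $\La_\Ga$, and three distinct limits would pin down $\lim\g_n C_0$ in $\cal C$, reducing to the previous case via closedness of $\Ga C_0$. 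The main obstacle is guaranteeing distinctness when $\g_n\to\infty$, as the attracting-point dynamics of $\rho_i(\g_n)$ on $\hat\c$ typically collapses multiple points of $C_0$ to a single limit. The escape is that a non-degenerate limit $C$ forces the repelling fixed point of $\rho_i(\g_n)$ to lie in $C_0\cap\La_{\rho_i(\Ga)}=f_i(C_0\cap\La_\Ga)$, hence to equal $f_i(x_{-})$ for some $x_{-}\in C_0\cap\La_\Ga$; choosing $\xi_1=x_{-}$ and $\xi_2,\xi_3\in(C_0\cap\La_\Ga)\setminus\{x_{-}\}$ using the $\geq 3$ hypothesis, together with a finer analysis of the expansion of $\rho_i(\g_n)$ near $f_i(x_{-})$ transported back to $\g_n$ near $x_{-}$ via the $f_i$-equivariance, yields the three distinct limits needed to conclude.
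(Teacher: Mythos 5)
Your treatment of the first conclusion (closedness of $\Ga_\rho T_0$) is correct and is essentially the paper's own argument: project to the first coordinate, use local finiteness of $\Ga C_0$ to make $\g_n C_0$ eventually constant, and then use the slice hypothesis together with ``three points determine a circle'' to force every coordinate to be constant. One small slip: the inclusion $\rho_i(\Ga\cap H_0)\subset H_0$, which is the direction you actually use, is not ``immediate''; it requires the same three-point/equivariance argument you spell out for the converse (it does hold, since that argument is symmetric under exchanging $f_i$ and $f_i^{-1}$), so this is only an expository issue.

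The genuine gap is in the second conclusion, that $\rho_i(\Ga)C_0$ is closed for $i\ge 2$. First, ``lift to a convergent subsequence in $\cal T$'' assumes exactly what must be proved: when $\rho_i(\g_n)C_0\to C$, the circles in the other coordinates, in particular $\g_n C_0$, may a priori shrink to a point, and ruling this out is the whole content of the claim. Second, your proposed escape does not yield three distinct limits. Grant your own premise that (along a subsequence) the repelling direction of $\rho_i(\g_n)$ converges to $f_i(x_-)\in C_0\cap\La_{\rho_i(\Ga)}$: writing $\rho_i(\g_n)=k_n a_{t_n} l_n$ with $t_n\to\infty$, every point of $\hat\c$ that stays at a definite spherical distance from $l_n^{-1}(0)$ is mapped within $o(1)$ of the single point $k_n(\infty)$. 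Since $\xi_2,\xi_3$ are fixed points of $C_0\cap\La_\Ga$ different from $x_-$, both images $\rho_i(\g_n)f_i(\xi_2)$ and $\rho_i(\g_n)f_i(\xi_3)$ therefore converge to the same point of $C$. Hence from the three guaranteed slice points you obtain at most two distinct limits, and two points do not determine a circle, so you cannot pin down $\lim \g_n C_0$ and reduce to the first case; the concluding appeal to ``a finer analysis of the expansion \ldots transported back via the $f_i$-equivariance'' is not an argument (note $f_i$ is in general only H\"older, so collapse rates in the two coordinates cannot be naively compared through it). What is missing is precisely a proof that the degenerate scenario --- $\rho_i(\g_n)C_0\to C$ a genuine circle while $\g_n C_0$ collapses to a point of $\La_\Ga$ --- cannot occur; once that is excluded, eventual constancy of $\g_n C_0$ and your stabilizer identification finish the proof exactly as in the first part.
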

\begin{proof}
Suppose that a sequence $T_n=(\rho_1(g_n) C_0,\rho_2(g_n)C_0,\cdots,\rho_d(g_n)C_0)$ converges to some torus $T=(C_1,C_2,\cdots,C_d)$ for $g_n\in\Ga$.
We need to show that $T\in\Ga_\rho T_0$.
Since $\Ga C_0$ is closed and hence locally finite by Lemma \ref{equiv}, we may assume that for all $n\ge 1$, $g_nC_0=C_1$ by throwing away finitely many $g_n$'s (recall $\rho_1=\text{id})$.
Observe that $\rho_i(g_n)f_i(C_0\cap \La_\Ga)=f_i(g_n(C_0\cap\La_\Ga))=f_i(C_1\cap\La_\Ga)$ by \eqref{eq.EQV}.
On the other hand $f_i(C_0\cap\La_\Ga)=C_0\cap\La_{\rho_i(\Ga)}$ and it contains at least 3 distinct points. Since two circles sharing three distinct points must be equal to each other,
we get $\rho_i(g_n)C_0=C_1$ for all $1\le i\le d$ and all $n$.
It follows that $T_n=T=T_0$ for all $n$, proving the first claim. The second claim can be proved similarly.
\end{proof}

Although we won't be using the following proposition in the rest of our paper, {it} is of independent interest {and} extends the 
analogous
fact for \Conv groups for $d=1$.
\begin{prop} Let $T$ be a torus and $H_T$ be the stabilizer of $T$ in $G$.
Suppose $\Gr T$ is closed with $\# (T\cap \La_\rho) \ge 3$. 
Then $\Gr\cap H_T$ is a non-elementary Anosov subgroup
and $$T\cap \La_\rho =\La_{\Gr\cap H_T}.$$
\end{prop}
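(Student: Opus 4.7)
The plan is to realize $\Gr\cap H_T$ as the $\rho$-self-joining of a convex cocompact subgroup of $\Ga$, from which the Anosov property then follows from the hypotheses on the $\rho_i$.

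\emph{Step 1 (algebraic structure).} Put $\Ga':=\{g\in\Ga:\rho_i(g)C_i=C_i\text{ for all }1\le i\le d\}$, so that $\Gr\cap H_T=\rho(\Ga')$, and set $\La':=\{\xi\in\La_\Ga:f_i(\xi)\in C_i\text{ for all }i\}$. The tuple $(f_1,\dots,f_d)$ restricts to a bijection $\La'\leftrightarrow T\cap\La_\rho$, so $\#\La'\ge 3$, and by the $\rho_i$-equivariance \eqref{eq.EQV} of the boundary maps, $\La'$ is $\Ga'$-invariant. The inclusion $\La_{\Ga'}\subseteq\La'$ is automatic, since $\rho_i(\Ga')$ preserves $C_i$ and hence $f_i(\La_{\Ga'})=\La_{\rho_i(\Ga')}\subseteq C_i$ for each $i$.

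\emph{Step 2 (properness).} By Lemma \ref{equiv}, closedness of $\Gr T$ is equivalent to properness of the inclusion $(\Gr\cap H_T)\ba H_T\hookrightarrow\Gr\ba G$; in particular $\Gr\cap H_T$ is discrete in $H_T$.

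\emph{Step 3 (convex cocompactness of $\Ga'$).} This is the crux. For the reverse inclusion $\La'\subseteq\La_{\Ga'}$ and for cocompactness of the $\Ga'$-action on $\text{Hull}(\La')$, I would invoke uniform conicality of $\La_\rho$ (Lemma \ref{lem.con}): for each $\xi\in\La'$, pick $g_\xi\in H_T$ with $g_\xi P=(f_1(\xi),\dots,f_d(\xi))$ and obtain $\ga_n\in\Gr$ and $a_n\to\infty$ in $A^+$ with $\ga_n g_\xi a_n$ bounded in $G$. Combined with Step 2 and a skinning-type compactness argument in the spirit of Proposition \ref{lem.supp}(1), this should force $\ga_n\in(\Gr\cap H_T)$ modulo a uniform compact set, which both witnesses $\xi\in\La_{\Ga'}$ and, after projection to the first $\PSL_2(\c)$-factor, shows that $\Ga'$ acts cocompactly on $\text{Hull}(\La')\subset\text{Hull}(C_1)$. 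Hence $\Ga'$ is convex cocompact in $\PSL_2(\c)$ with $\La_{\Ga'}=\La'$.

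\emph{Step 4 (conclusion).} Convex cocompactness of $\Ga'$ in $\Ga$ gives a QI-embedding $\Ga'\hookrightarrow\Ga$, so each $\rho_i|_{\Ga'}$ is convex cocompact; the inequality \eqref{eq.Q} then holds on $\rho(\Ga')$ with a uniform constant, proving $\Gr\cap H_T$ is Anosov. Its limit set is $\{(f_1(\xi),\dots,f_d(\xi)):\xi\in\La_{\Ga'}\}=T\cap\La_\rho$, and non-elementariness follows from $\#\La'\ge 3$.

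\emph{Main obstacle.} Step 3 is the heart of the argument: the properness supplied by Step 2 lives in the product-space quotient $\Gr\ba G$, and transporting it to cocompactness on a single hyperbolic plane bounded by $C_1$ is delicate, since only a wall-avoiding subcone of $A^+$-directions can be absorbed into $H_T$-translates. One must verify that the $a_n$'s produced by Lemma \ref{lem.con} can be chosen inside such a subcone — the same mechanism underlying the skinning compactness in Proposition \ref{lem.supp}(1).
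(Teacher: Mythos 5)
Your Step 3 is a genuine gap, and it is exactly the content of the proposition: Steps 1, 2 and 4 are essentially bookkeeping, while the passage from properness in $\Gr\ba G$ to ``$\La'\subseteq\La_{\Ga'}$ and $\Ga'$ acts cocompactly on $\operatorname{Hull}(\La')$'' is only asserted (``should force''), not proved. Two concrete problems. First, the compactness input you appeal to, Proposition \ref{lem.supp}(1), is proved under the admissibility hypothesis $f_i(C_1\cap \La_\Ga)=C_i\cap\La_{\rho_i(\Ga)}$, which is \emph{not} assumed in this proposition; the only available input is the properness of $(\Gr\cap H_T)\ba H_T\to \Gr\ba G$ coming from Lemma \ref{equiv} (your Step 2), and your sketch never actually runs that argument. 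Second, even granting that for each $\xi\in\La'$ you produce $\tilde\delta_n\in\Gr\cap H_T$ and $a_n\to\infty$ in a cone avoiding the walls with $\tilde\delta_n g_\xi a_n$ bounded (so that $\xi$, in the first factor, is a conical limit point of $\Ga'$), your route still needs a Beardon--Maskit/Bishop-type theorem (``every limit point conical implies convex cocompact'') to convert conicality of all points of $\La'$ into cocompactness of $\Ga'$ on $\operatorname{Hull}(\La')$; this nontrivial rank-one input is nowhere supplied, and without it Step 4 has nothing to stand on.

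The paper proves the statement without the rank-one detour. After conjugating so that $H_T$ is the product of copies of $\PSL_2(\R)$ (so $A\subset H_T$ and $H_T/(H_T\cap P)\subset G/P$ is the Furstenberg boundary of $H_T$), it applies the characterization of Anosov subgroups by regularity, conicality and antipodality \cite{KLP1} directly to $\Gr\cap H_T$ inside $G$: regularity and antipodality are inherited from the Anosov group $\Gr$, and for conicality one writes $\xi=h(H_T\cap P)\in T\cap\La_\rho$ with $h\in H_T$, uses that $\xi$ is conical for $\Gr$ (so $\delta_n h a_n$ is bounded for some $\delta_n\in\Gr$, $a_n\to\infty$ in $A^+$), and then uses the properness from Lemma \ref{equiv} --- precisely because $ha_n\in H_T$ --- to replace $\delta_n$ by $\tilde\delta_n\in\Gr\cap H_T$. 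This shows $T\cap\La_\rho=\La^{\mathrm{rad}}_{\Gr\cap H_T}$, which together with $\La_{\Gr\cap H_T}\subseteq T\cap\La_\rho$ yields both conicality and the limit-set identity in one stroke. If you wish to salvage your reduction to $\Ga'$, this properness argument is what you must write out in Step 3, and you would still owe the ``conical implies convex cocompact'' citation --- which is why the intrinsic argument via \cite{KLP1} is the cleaner route.
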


\begin{proof} 
Without loss of generality, we may assume {that} $H_T$ is the product of $\PSL_2(\br)$'s.
We use the characterization of an Anosov subgroup as a subgroup of $G$ satisfying {the properties of} Regularity, Conicality, Antipodality, 
shown in
\cite[{Thm. 1.1}]{KLP1}.  Since $H_T\cap \Gr$ is a subgroup of an Anosov subgroup $\Gr$, $H_T$ contains $A$ and $H_T/(H_T\cap P)\subset G/P$ is the Furstenberg boundary of $H_T$,
the regularity and antipodality are immediate.

We deduce the conicality as follows. 
Let $\xi\in T\cap \La_\rho$. We can choose $h\in H_T$ such that $h(H_T\cap P)=\xi$.
Since $\Gr$ is Anosov, $\xi$ is a radial limit point of $\Gr$, that is, there exist $a_n\to \infty$ in $A^+$ and $\delta_n\in \Ga_\rho$ such that
$\delta_n h a_n $ is bounded. Since the map $\Gr\cap H_T\ba H_T\to \Gr \ba G$ is proper by Lemma \ref{equiv} and $ha_n\in H_T$,
it follows that there exists $\tilde \delta_n\in\Gr\cap H_T$ that
$$\tilde \delta_n h a_n$$ is bounded.
This implies that $\xi=h (H_T\cap P)$ is a radial limit point of $\Gr\cap H_T$ in
$H_T/(H_T\cap P)$.
Hence we have shown that
$T\cap \La_\rho $ is equal to the set $\La^{\text{rad}}_{\Gr\cap H_T}$ of all radial limit points of $\Gr\cap H_T$.
Since $\La_{\Gr\cap H_T}\subset T\cap \La_{\rho}$, it follows that
$$\La_{\Gr\cap 
H_T
}=\La^{\text{rad}}_{\Gr\cap H_T}.$$
Hence $
\Gr\cap 
H_T
$ is conical. This proves that $\Gr\cap 
H_T
$ is Anosov. The hypothesis $\# (T\cap \La_\rho)\ge 3$ now implies that 
$\Gr\cap 
H_T
$
is non-elementary.
\end{proof}

\section{Torus counting function for admissible torus packings}\label{sec:count}
We write $\mathsf r(C)$ for the radius of a circle $C$.
Given a torus $T=(C_1,\cdots,C_d)\in \cal T$, we define its \emph{length vector} {$\vecv(T)\in\frak a\cup\{\infty\}$} by
$$\vecv(T)=-\big(\log \mathsf r(C_1), \cdots, \log \mathsf r(C_d) \big)$$
{if $\mathsf r(C_i)<\infty$ for all $1\leq i\leq d$, and $\vecv(T)=\infty$ otherwise.}

We will call a linear form $\psi\in \fa^*$ \textit{positive} if 
$$\text{$\psi>0$ on $\fa^+-\{0\}$.}$$

In the rest of this section, we  fix 
\begin{itemize}
    \item a $\Ga_\rho$-admissible torus packing
$\cal P=\Ga_\rho T_0$;
\item a positive $\Ga_\rho$-critical linear form $\psi\in\frak a^*$.
\end{itemize} 

\begin{Def}[Counting function] For  a bounded subset $E\subset{\bb C}^d$ and $R>0$, we set
\begin{align}\label{eq.N}
N_{R}(\cal P,\psi, E)=\#\left\lbrace T\in \cal P:  
\psi(\vecv(  T))< R  ,\; T \cap E \neq \emptyset \right\rbrace.
\end{align}
\end{Def} 

The local finiteness assumption on $\cal P$ together with the positivity hypothesis on $\psi$ guarantees that
\begin{lem} \label{fin5} For any bounded subset $E\subset{\bb C}^d$ and $R>0$,
$$N_{R}(\cal P,\psi, E)<\infty .$$
\end{lem}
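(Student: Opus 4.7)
The plan is to argue by contradiction using the local finiteness of $\cal P$. Suppose, for a contradiction, that $\{T_n\}_{n\ge 1}$ is an infinite sequence of pairwise distinct tori in $\cal P$ with $T_n\cap E\ne\emptyset$ and $\psi(\vecv(T_n))<R$. Writing $T_n=(C_{n,1},\ldots,C_{n,d})$, each $C_{n,i}$ lies in $\rho_i(\Ga)C_0$ since $\cal P=\Ga_\rho T_0$. The strategy is to show that $\{T_n\}$ is contained in a compact subset of $\cal T$, and then extract a convergent subsequence to contradict the local finiteness of $\cal P$.

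Fix $M>0$ with $E\subset B(0,M)^d$, so every $C_{n,i}$ meets the bounded disk $B(0,M)\subset\c$. The first step is to show that for each $i$, the radii $\mathsf r(C_{n,i})$ are bounded above. Otherwise one extracts infinitely many pairwise distinct circles in $\rho_i(\Ga)C_0$ meeting $B(0,M)$ with radii tending to infinity; such a sequence subsequentially converges in $\cal C$ to a line through $\overline{B(0,M)}$, violating the local finiteness of the circle packing $\rho_i(\Ga)C_0$ guaranteed by the $\Ga_\rho$-admissibility of $T_0$. Let $M_i$ denote such a uniform upper bound, so $\mathsf r(C_{n,i})\le M_i$ for all $n$.

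Next, the positivity hypothesis $\psi>0$ on $\fa^+-\{0\}=\br_{\ge 0}^d-\{0\}$ forces $\psi(x_1,\ldots,x_d)=\sum_i c_i x_i$ with each $c_i>0$ (evaluate $\psi$ on the standard basis vectors, which lie in $\fa^+\setminus\{0\}$). Combining the inequality $\sum_i c_i(-\log\mathsf r(C_{n,i}))<R$ with $-\log\mathsf r(C_{n,j})\ge -\log M_j$ for $j\ne i$ yields, for each fixed $i$,
$$c_i\bigl(-\log\mathsf r(C_{n,i})\bigr)<R+\sum_{j\ne i}c_j\log M_j,$$
which gives a uniform positive lower bound $\mathsf r(C_{n,i})\ge\epsilon_i>0$, independent of $n$.

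Therefore every $C_{n,i}$ has radius in the compact interval $[\epsilon_i,M_i]$ and center contained in $B(0,M+M_i)$, so $\{C_{n,i}\}_n$ lies in a compact subset of $\cal C$. Consequently $\{T_n\}$ lies in a compact subset of $\cal T=\cal C^d$ and admits a subsequence converging in $\cal T$ to an actual torus (the limit is genuinely a torus because the radii stay bounded away from zero), contradicting the local finiteness of $\cal P$. The main technical point is ruling out the degenerate scenario where some $\mathsf r(C_{n,i})$ collapses to zero; this is precisely where the positivity of $\psi$ is used, converting the upper bounds $M_i$ (coming from local finiteness of each individual circle packing $\rho_i(\Ga)C_0$) into the strictly positive lower bounds $\epsilon_i$ needed to obtain compactness in $\cal T$.
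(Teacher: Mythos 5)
Your proof is correct, and its core ingredients are the same as the paper's, but the concluding mechanism is genuinely different. In both arguments, local finiteness of the circle packings $\rho_i(\Ga)C_0$ is what controls the radii from above (your version: distinct circles of radius tending to infinity meeting a bounded set would converge to a line; the paper phrases it as finiteness of the set of circles of radius bounded below meeting a bounded set), and positivity of $\psi$ is what then pins the radii away from $0$ (you use that all coefficients of $\psi$ are positive; the paper uses $\psi\ge c\norm{\cdot}$ on $\fa^+$ and splits off a separate finite term $n_0$ for the tori with $\vecv(T)\notin\fa^+$). The difference is the finish: the paper never appeals to local finiteness of $\cal P$ as a packing in $\cal T$ — once each factor circle is confined to a compact family, it just observes that each $C_i$ ranges over a finite subset of $\rho_i(\Ga)C_0$ and a torus is determined by its tuple of circles, so the count is finite; you instead run a sequential-compactness contradiction in $\cal T$ and invoke the local finiteness of $\cal P$ itself. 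Both hypotheses are part of $\Ga_\rho$-admissibility, so either route is legitimate: the paper's count shows the lemma really only needs the circle-packing half of the hypothesis, while your ordering (upper bounds on radii first via the line-limit argument, lower bounds second from positivity of the coefficients) handles the tori with some large-radius circle uniformly and avoids the paper's separate bookkeeping of the term $n_0$, which is arguably the tidier organization of that step.
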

\begin{proof}
By the local-finiteness of $\rho_i(\Ga)C_0$, there are only finitely many circles in $\rho_i(\Ga)C_0$
of radius bounded from below intersecting a fixed bounded set.
In particular,
$$
n_0:=\#\{T\in\cal P : \vecv(T)\not\in\frak a^+\text{ and }T\cap E\neq\emptyset\}<\infty.
$$
By the positivity hypothesis on $\psi$, we have
$c:=\inf_{v\in \fa^+, \|v\|=1} \psi (v) >0$ and hence
$\psi (v)\ge c \| v\|$ for all $v\in \fa^+$.
Hence 
\begin{align*} 
&N_{R}(\cal P,\psi, E)-n_0
\\&\le  \#\{T=(C_1,\cdots,C_d)\in \cal P: \sum_{i=1}^d |\log \mathsf r (C_i)|^2\le
R^2/c^2 \text{ and }T\cap E\neq\emptyset\}
\\ &\le \#\sum_{i=1}^d \{ C\in \rho_i(\Ga) C_0 : e^{-R/c}  \le 
\mathsf r (C_i) 
\text{ and } C\cap \pi_i(E)\ne \emptyset\} 
\end{align*}
where $\pi_i(E)$ denotes the projection of $E$ to the $i$-th factor $\hat \c$.
The last quantity is finite by the local-finiteness of $\rho_i(\Ga)C_0$.
 This proves the claim.
\end{proof}

We will introduce a subset $\tilde B_\psi(E,R)
\subset H\ba G$ and explain how 
$N_{R}(\cal P,\psi,  E)$ is related to the number of $\Ga_\rho$-orbits in the set $
\tilde B_\psi (E,R)
$.

\subsection*{Definition of $\tilde B_\psi(E,R)$}
For $R>0$, we define
$$
 A^+_{\psi,R}= \{a_t \in A^+:\psi(t)<R\},
$$
where $a_t$ is defined as in \eqref{at}. As $\psi$ is positive, $A^+_{\psi, R}$ is bounded.

{For any subset $E\subset\bb C^d$, we define
$$
N_E=\{n_z\in N:z\in E\}
$$
where $n_z$ is defined as in \eqref{eq.nz}.
For any $\e>0$, set
\begin{equation}\label{eq.epm}
    E^-_{\e}:=\bigcap_{\|\vecw\| < \epsilon} E+\vecw,\quad \text{and }\quad E^+_{\e}:=\bigcup_{\|\vecw\| < \epsilon} E+\vecw .
\end{equation}
}
\begin{Def} For any bounded $E\subset \CC^d$ and $R>0$, we define the following bounded subset of $H\ba G$ by
\be\label{BER0} 
\tilde B_\psi(E,R)
:=H\ba HK  
A^+_{\psi, R}
N_{-E} \subset H\ba G .\ee 
\end{Def} 
The following proposition  allows us to reformulate the counting problem in terms of the sets $\tilde  B_\psi (E_\e^\pm,R)$ (cf. \ \cite[Proposition 3.7]{OS}):
For $\e>0$, set
\be\label{qzero} q_0(\cal P, E, \e):=\#\{T=(C_1,\cdots,C_d)\in \cal P: \sum_{i=1}^d \mathsf r (C_i)^2> \e^2/4 \text{ and }T\cap E\neq\emptyset\}.\ee 
The finiteness of $q_0(\cal P, E, \e)$ can be seen 
{
as in}
the proof of Lemma \ref{fin5}.

\begin{prop}\label{BETcount1} Let $E\subset \CC^d$ be a bounded subset.
For any $\e>0$ small enough and any $R>0$, we have
$$
\#\big([e]\Ga_\rho \cap \tilde  B_\psi (E_\e^-,R)\big)-q_0 \leq N_{R}(\cal P,\psi, E)\leq \#\big([e]\Ga_\rho\cap \tilde  B_\psi (E_{\e}^+,R)\big)+q_0
$$
where $q_0=q_0(\cal P, E, \e)$.
\end{prop}
\begin{proof} 
{
Let $\hat T_0=\hat C_0\times\cdots\times\hat C_0$.
}
Note that
\begin{align}\label{eq.z1}
&\#\big([e]\Ga_\rho \cap H\ba HKA_{\psi,R}^+N_{-E_\e^{\pm}}\big)\notag\\
&=\#\lbrace \gamma \in \Gamma_\rho\cap H \ba \Ga_\rho\::\: H\ga \cap KA_{\psi,R}^+N_{-E_\e^{\pm}}
\neq\emptyset
\rbrace\notag
\\&=\#\lbrace \gamma \in \Ga_\rho/\Gamma_\rho\cap H\: :\,\gamma HK \cap N_{E_\e^{\pm}}(A_{\psi,R}^+)^{-1}K \neq \emptyset \rbrace\notag
\\&=\#\lbrace \gamma  { T_0}\in\scrP\,:\,\gamma \hat{T_0} \cap N_{E_\e^{\pm}}(A_{\psi,R}^+)^{-1}o \neq \emptyset \rbrace. 
\end{align}
Observe that
{
for $z=(z_i)_{i=1}^d\in\bb C^d$, $t=(t_i)_{i=1}^d\in\bb R^d$ and $o=(0,1)_{i=1}^d\in \prod_{i=1}^d\bb H^3$, we have $n_za_t o=(z_i,t_i)_{i=1}^d\in\prod_{i=1}^d\bb H^3$. 
Hence,
}
if  $\ga T_0\in\cal P$, $ \ga \hat{T_0}\cap N_{E_\e^-} (A_{\psi,R}^+)^{-1}o\neq\emptyset$ and  
$
\sum_{i=1}^d \mathsf{r}(\rho_i(\ga)C_0)^2\leq \e^2/4,
$
then $\ga T_0\cap E\neq\emptyset$ and
$  \psi(\vecv(\ga T_0))<R$.
This observation combined with \eqref{eq.z1} gives the lower bound in the statement of the proposition. Similarly, if $\ga T_0\in\cal P$ satisfies $\ga T_0\cap E\neq\emptyset$, $\psi(\vecv(\ga T_0))< R$ and 
$
\sum_{i=1}^d \mathsf r(\rho_i(\ga)C_0)^2\le \e^2/4,
$
then
$\ga T_0\subset E_{\e^+}$ and hence $\ga \hat{T_0}\cap  N_{E_\e^+} (A_{\psi,R}^+)^{-1}o \ne \emptyset$.
This combined with \eqref{eq.z1} gives the upper bound, proving the proposition.
\end{proof}

\subsection*{Definition of $B_\psi(E,R)$}
Let $\cal D\subset \op{int}\frak a^+$ be any closed cone such that
\be
\label{fixD} \inte\cal D\supset \cal L_\rho-\{0\}.
\ee 
Throughout the section we fix one such $\cal D$ and set, for any $R>0$,
\be\label{ddd} 
  D:=\exp \cal D\quad \text{ and } \quad D_{\psi, R}=D\cap A^+_{\psi,R}.
\ee

Analogously to $\tilde B_\psi (E,R)$, we now define
\begin{equation}\label{BER}
B_\psi(E,R)=B_{D,\psi}(E,R):=H\ba HKD_{\psi, R}
N_{-E} \subset H\ba G.
\end{equation}

\subsection*{$B_\psi(E,R)$ in terms of $G=HA^+K$ decomposition} We will now express {the} set $B_\psi(E,R)$ in terms of the
generalized Cartan decomposition $G=HA^+K$ (cf. \cite[p. {439}]{GOS2}).
Given $\epsilon>0$ and a subset $W\subset G$, let $W_{\epsilon}
$ denote the intersection of $W$ and the $\e$-ball around $e$ in $G$.  
\begin{lem}\cite[Proposition 4.2]{OS}\label{GEO}
For $d=1$, we have
\begin{enumerate}
\item If $a_t\in HKa_sK$ for some $s\geq 0$, then $|t|\leq s$.
\item For any $\epsilon>0$, there exists $R_1(\epsilon)>0$ such that 
$$\lbrace k\in K\,:\, a_tk\in HKA^+\; \mathrm{for\,some\,}t>R_1(\epsilon)\rbrace \subset K_{\epsilon} M.$$
\end{enumerate}
\end{lem}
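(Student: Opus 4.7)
The plan is to exploit the geometric interpretation of the subgroups involved: for $d=1$, $H$ is the stabilizer in $G$ of the totally geodesic hyperbolic plane $\hat C_0\subset\bH^3$ (the convex hull of the unit circle $C_0$), $K$ is the stabilizer of $o=(0,1)\in\hat C_0$, and $a_t$ acts as the hyperbolic translation by signed distance $t$ along the vertical geodesic through $o$, which meets $\hat C_0$ perpendicularly at $o$. In particular $\mathsf d(a_to,\hat C_0)=|t|$.

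For part (1), a decomposition $a_t=hka_sk'$ with $h\in H$, $k,k'\in K$ and $s\geq 0$ gives
$$\mathsf d(a_to,ho)=\mathsf d(ka_sk'o,o)=\mathsf d(a_so,o)=s,$$
using that $k,k'$ fix $o$ and $h$ is an isometry. Since $ho\in Ho\subset\hat C_0$, the triangle inequality yields $|t|=\mathsf d(a_to,\hat C_0)\leq\mathsf d(a_to,ho)=s$.

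For part (2), the key observation is that $HK\cdot o=H\cdot o=\hat C_0$, so $a_tk\in HKA^+$ is equivalent to the solvability in $s\geq 0$ of $a_tka_{-s}\cdot o\in\hat C_0$. Writing $k=\smatr{a}{b}{-\bar b}{\bar a}$ with $|a|^2+|b|^2=1$ and computing the M\"obius action on $o=j$ in the quaternionic upper half-space model, one finds
$$a_tka_{-s}\cdot o=\left(\frac{e^tab(1-e^{-2s})}{|a|^2+|b|^2e^{-2s}},\;\frac{e^{t-s}}{|a|^2+|b|^2e^{-2s}}\right).$$
Imposing $|z|^2+r^2=1$ and setting $\alpha=|a|^2$, $\beta=|b|^2$, $x=e^{-2s}\in(0,1]$ reduces the problem to finding $x\in(0,1]$ with $g(x)=e^{-2t}$, where $g(x)=(\alpha\beta(1-x)^2+x)/(\alpha+\beta x)^2$. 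A direct derivative calculation produces the clean identity $g'(x)=(1-2\beta)/(\alpha+\beta x)^2$, so $g$ is strictly monotone on $[0,1]$ with $g(0)=\beta/\alpha$ and $g(1)=1$. For $t>0$, the equation $g(x)=e^{-2t}$ therefore admits a solution in $(0,1]$ exactly when $\beta<1/2$ and $\beta<\alpha e^{-2t}$, both of which force $|b|\leq e^{-t}$.

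Finally, under the identification $K/M\cong S^2$ via $k\mapsto k\cdot[1:0]$, a short computation in any bi-invariant metric on $K$ gives $d_K(k,M)\asymp|b|$ for $|b|$ small; hence taking $R_1(\epsilon)$ large enough that $e^{-R_1(\epsilon)}$ is a sufficiently small multiple of $\epsilon$ produces the desired inclusion $k\in K_\epsilon M$. The only nontrivial computation is establishing the monotonicity identity for $g$; the remaining ingredients are the direct M\"obius action formula and triangle-inequality reasoning in $\bH^3$.
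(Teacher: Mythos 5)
Your proof is correct. Note that the paper does not actually prove Lemma \ref{GEO}: it only cites \cite[Prop. 4.2]{OS}, so your argument serves as a self-contained replacement for the reference rather than a variant of an in-paper proof. For (1), the chain $|t|=\mathsf{d}(a_to,\hat C_0)\le \mathsf{d}(a_to,ho)=\mathsf{d}(ka_sk'o,o)=s$ is exactly the right use of the facts that $Ho\subset\hat C_0$ and that the vertical geodesic through $o$ meets the hemisphere $\hat C_0$ orthogonally at $o$. For (2), I checked both computational claims: the quaternionic M\"obius formula for $a_tka_{-s}\cdot o$ is correct, and indeed $g'(x)=(1-2\beta)/(\alpha+\beta x)^2$, so for $t>0$ solvability of $g(x)=e^{-2t}$ on $(0,1]$ is equivalent to $\beta<\alpha e^{-2t}$ (a condition which already forces $\beta<1/2$), whence $|b|<e^{-t}$. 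Two small points worth smoothing out: when $\beta\ge 1/2$ the function $g$ need not be strictly monotone ($g\equiv 1$ if $\beta=\tfrac12$, and $g(0)=\beta/\alpha$ is meaningless if $\alpha=0$), but in all of those cases $g\ge 1$ on $(0,1]$, so there is no solution for $t>0$, which is all you need; and the bound $d_K(k,M)=O(|b|)$ is immediate from the identity $k\,\diag(\bar a/|a|,\,a/|a|)=\smatr{|a|}{ab/|a|}{-\bar a\bar b/|a|}{|a|}$, which lies within $O(|b|)$ of the identity because $1-|a|\le|b|^2$. With these remarks your argument fully establishes both parts of the lemma by elementary hyperbolic geometry, in the same spirit as the cited result of Oh--Shah.
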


We set
\be\label{Xe} \cal X_\e:=\{a_t\in A^+: \min_{1\le i\le d} t_i \le R_1(\e/\sqrt d)\},\ee  that is 
the closed $R_1(\e/\sqrt d)$-neighborhood of $\partial A^+$, where $R_1(\e/\sqrt{d})>0$ is the constant as given
in Lemma \ref{GEO}(2).

We deduce the following{.}
\begin{lem}\label{HKA}

For any $\e>0$ and $R>0$,
$$
K A^+_{\psi, R} \subset H(A^+_{\psi, R}-\cal X_\e)K_\e\cup H\cal X_\e K.
$$
\end{lem}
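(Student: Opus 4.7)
The plan is to write any $g = ka_t \in KA^+_{\psi,R}$ in the form $g = ha'k'$ (with $h \in H$, $a' \in A^+$, $k' \in K$) using the generalized Cartan decomposition $G = HA^+K$ applied componentwise, and then to read off which of the two sets on the right-hand side contains $g$, depending on whether $a'$ lies in $\cal X_\e$ or not.

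I would write $k = (k_1,\ldots,k_d)$, $a_t = (a_{t_1},\ldots,a_{t_d})$, and similarly $h = (h_i)$, $a' = (a_{t_i'})$, $k' = (k_i')$, with $t_i, t_i' \ge 0$. If $a' \in \cal X_\e$ (i.e.\ $t_j' \le R_1(\e/\sqrt d)$ for some $j$), then $g \in H\cal X_\e K$ and that case is done. Otherwise $t_i' > R_1(\e/\sqrt d)$ for every $i$, and the factorwise equation $k_i a_{t_i} = h_i a_{t_i'} k_i'$ rearranges both as
$$
a_{t_i'} \;=\; h_i^{-1} k_i a_{t_i} (k_i')^{-1} \;\in\; HK a_{t_i} K
$$
in the $i$-th $\op{PSL}_2(\bb C)$ factor, so Lemma \ref{GEO}(1) gives $t_i' \le t_i$; and as $a_{t_i'} k_i' = h_i^{-1} k_i a_{t_i} \in HKA^+$, so Lemma \ref{GEO}(2) — applied in that factor with $\e$ replaced by $\e/\sqrt d$ and using the standing assumption $t_i' > R_1(\e/\sqrt d)$ — yields $k_i' \in K_{\e/\sqrt d} M$ in the $i$-th factor.

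Because $\psi$ is positive on $\fa^+-\{0\}$, it must have the form $\psi(s) = \sum_i c_i s_i$ with all $c_i > 0$ (otherwise it would vanish on some standard basis vector of $\fa^+$), and so the coordinatewise inequality $t_i' \le t_i$ forces $\psi(\log a') \le \psi(\log a_t) < R$; thus $a' \in A^+_{\psi,R} - \cal X_\e$. I would then factor each $k_i' = k_i'' m_i$ with $k_i''$ in the $\e/\sqrt d$-ball of the $i$-th copy of $K$ and $m_i$ in the $i$-th copy of $M$. Under the $L^2$ product metric on $K$ the product $k'' := (k_i'')$ then lies in $K_\e$ (precisely because $\sqrt{d\cdot (\e/\sqrt d)^2} = \e$), and $m := (m_i) \in M \subset H$ since the diagonal elements of $M$ lie in each $\op{PSU}(1,1)$ factor. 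Since $M$ commutes with $A$, this collapses to
$$
g \;=\; h\, a'\, k'' m \;=\; (hm)\, a'\, k'' \;\in\; H \cdot (A^+_{\psi,R} - \cal X_\e) \cdot K_\e,
$$
which is the desired inclusion.

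The only genuine subtlety is the dimensional rescaling $\e \mapsto \e/\sqrt d$ built into the definition of $\cal X_\e$: it is tuned precisely so that the componentwise conclusions of Lemma \ref{GEO}(2) reassemble into a single $\e$-neighborhood of $M$ in the product group $K$. Everything else is a direct componentwise application of Lemma \ref{GEO}, together with the absorption of $M$ into $H$.
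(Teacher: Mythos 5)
Your argument is essentially the paper's: decompose componentwise via $G=HA^+K$, use Lemma \ref{GEO}(1) to get $t_i'\le t_i$, use positivity of $\psi$ (the paper only needs $\psi\ge 0$ on $\fa^+$ together with $t-t'\in\fa^+$) to keep the $A^+$-part in $A^+_{\psi,R}$, and use Lemma \ref{GEO}(2) in each factor with parameter $\e/\sqrt d$ to place the $K$-part in $K_\e M$, splitting into the two cases according to whether the $A^+$-part lies in $\cal X_\e$. The one misstep is your final identity $h\,a'\,k''m=(hm)\,a'\,k''$: commutativity of $M$ with $A$ does not let you move $m$ past $k''$, and that equality is false in general. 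The correct absorption is $k''m=m\,(m^{-1}k''m)$, where $m^{-1}k''m\in K_\e$ because $K_\e$ is the intersection of $K$ with a metric ball around $e$ that is $\Ad(M)$-invariant (equivalently $K_\e M=MK_\e$); then $g=h\,a'\,m\,(m^{-1}k''m)=(hm)\,a'\,(m^{-1}k''m)\in H(A^+_{\psi,R}-\cal X_\e)K_\e$, using $M\subset H$ and $MA=AM$. This is a one-line fix rather than a structural gap — indeed the paper's own proof stops at ``$\ell\in K_\e M$'' and leaves exactly this absorption implicit — but as written your last equation does not hold.
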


\begin{proof}
For any $k=(k_1,\cdots,k_d)\in K$ and $a_t\in A^+$, using the decomposition $G=HA^+K$, we can find  $h=(h_1,\cdots,h_d)\in H$, $a_s\in A^+$ and $\ell=(\ell_1,\ldots,\ell_d)\in K$ such that
\begin{equation}\label{eq.rec}
k_i \smatr{e^{t_i}/2}{0}{0}{e^{-t_i/2}} =h_i \smatr{e^{s_i}/2}{0}{0}{e^{-s_i/2}} \ell_i 
\end{equation}
for all $i=1,\ldots,d$, where $t=(t_i)_{1\leq i\leq d}$ and $s=(s_i)_{1\leq i\leq d}$.
From Lemma \ref{GEO}(1), we then have $s_i \leq t_i$.
Since $\psi|_{ \frak a^+}\geq 0$, we have 
$$
\psi(s)\leq\psi(t).
$$
Hence if $a_t\in A^+_{\psi, R}$,  then we have $a_s\in A^+_{\psi, R}$. Furthermore, if $a_s\not\in\cal X_\e$, we have $s_i>R_1(\e/\sqrt{d})$ for each $i$ and hence $\ell\in K_{\e} M$ by Lemma \ref{GEO}(2).
{Since $K_\e M=MK_\e$ and $M\subset H$,}
this proves the lemma.  
\end{proof}

\subsection*{Further refinement}
The  following lemma appears in \cite[Prop. 4.7]{OS} for the case $d=1$, and this implies the general $d$-case as the computations can be reduced to each component{.}
\begin{lem}\label{WAVE} \cite[Prop. 4.7]{OS}
 There exists ${\ell'}\ge 1$ such that for all small enough $\epsilon>0$, {and} $a\in A^+$
$$ a K_\e 
M
\subset H ( a A_{\ell{'}\e} ) N_{\ell{'}\e}  .$$
\end{lem}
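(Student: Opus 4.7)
The plan is to reduce the $d$-dimensional statement to the $d = 1$ case proved in \cite[Prop. 4.7]{OS}. Since $G = \prod_{i=1}^d \PSL_2(\c)$ is a direct product, and each of $H$, $K$, $A$, $N$, $M$ is the componentwise product of the corresponding subgroup of $\PSL_2(\c)$, the wavefront decomposition can be carried out factor-by-factor and then reassembled.

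I would first fix compatible product-type norms on $G$ so that $k = (k_1, \ldots, k_d) \in K_\e$ implies each $k_i$ lies in an $O(\e)$-neighborhood of the identity in the corresponding factor of $\PSL_2(\c)$; the metric-conversion constant can be absorbed into $\ell$ at the end. Applying \cite[Prop. 4.7]{OS} in each $\PSL_2(\c)$-factor produces a uniform $\ell_0 \ge 1$ such that, for all sufficiently small $\e > 0$, all $a_i \in A_i^+$, all $k_i$ in an $O(\e)$-neighborhood of the identity in $K_i$, and all $m_i \in M_i$,
\[
a_i k_i m_i = h_i\,\tilde a_i\, n_i
\]
for some $h_i \in H_i$, $\tilde a_i \in a_i A_{i, \ell_0 \e}$, and $n_i \in N_{i, \ell_0 \e}$. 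Multiplying componentwise in $G = \prod G_i$ yields $akm = h\,\tilde a\, n$ with $h = (h_i) \in H$, $\tilde a = (\tilde a_i) \in aA_{\sqrt{d}\,\ell_0 \e}$ (using the Euclidean norm on $\fa = \br^d$), and $n = (n_i) \in N_{\sqrt{d}\,\ell_0 \e}$. Setting $\ell$ to absorb the $\sqrt d$ factor and the metric-conversion constants then gives the claim.

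Should it turn out that \cite[Prop. 4.7]{OS} is stated without the $M$-factor, the reduction is still immediate, since $M \subset H$: the diagonal unitary $\smatr{e^{i\theta}}{0}{0}{e^{-i\theta}}$ lies in $\PSU(1,1)$ (take $a = e^{i\theta}$, $b = 0$), $M$ centralizes $A$, and conjugation by the compact group $M$ distorts the norm on $N$ by a uniformly bounded factor; thus $ak = h\tilde a n$ would imply $akm = (hm)\,\tilde a\,(m^{-1} n m)$, absorbing $m$ at the cost of enlarging $\ell$ by a bounded constant.

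The argument is essentially bookkeeping once the rank-one statement is granted; the geometric content sits entirely in the $d=1$ case, where it reflects the contracting action of $A^+$ on $N$ and the fact that the multiplication map $H \times A \times N \to G$ is a local diffeomorphism at the identity. The only mild technical point is tracking how $\e$-balls in the product metric relate to $\e$-balls in each factor, but this introduces only a dimensional constant that is absorbed into $\ell$; no substantive obstacle is expected.
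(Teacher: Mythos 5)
Your proposal is correct and follows essentially the same route as the paper, which simply cites \cite[Prop. 4.7]{OS} for the rank-one case and notes that the general $d$ case reduces componentwise since $H$, $K$, $A$, $M$, $N$ are all direct products of their $\PSL_2(\c)$-counterparts. Your extra bookkeeping (metric-conversion constants, the $\sqrt d$ factor, and absorbing $M\subset H$) is exactly the implicit content of the paper's one-line reduction.
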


Using Lemma \ref{WAVE}, we obtain{.}
\begin{lem}\label{lem.XE}
For any $\epsilon>0$ and a bounded subset $E\subset\bb C^d$, there exists a compact subset $Z=Z(E, D, \e)\subset H\ba G$ such that
$$
\tilde B_\psi(E,R)\subset  H\ba HD_{\psi, R}A_{{\ell'}\e}N_{-E_{{\ell'}\e}^+}\cup H\ba H(A^+-D)KN_{-E} \cup Z,
$$
\end{lem}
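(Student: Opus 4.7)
The plan is to begin with $\tilde B_\psi(E,R)=H\ba HKA^+_{\psi,R}N_{-E}$ and peel off the three required pieces in turn, using Lemmas \ref{HKA} and \ref{WAVE} together with the geometric fact that $\cal D\subset\inte\fa^+$. The first step is to apply Lemma \ref{HKA} to obtain
$$
HKA^+_{\psi,R}N_{-E}\subset H(A^+_{\psi,R}-\cal X_\e)K_\e N_{-E}\,\cup\, H\cal X_\e KN_{-E},
$$
splitting the analysis into an ``interior'' piece (where we have gained the small neighborhood $K_\e$) and a ``wall'' piece $H\cal X_\e KN_{-E}$.

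For the interior piece, partition $A^+_{\psi,R}-\cal X_\e$ according to whether the exponent lies in $\cal D$; since $D\cap A^+_{\psi,R}=D_{\psi,R}$, we obtain
$$
H(A^+_{\psi,R}-\cal X_\e)K_\e N_{-E}\subset HD_{\psi,R}K_\e N_{-E}\,\cup\, H(A^+-D)KN_{-E},
$$
and the second set is already the second target set. For the first, since $K_\e\subset K_\e M$, Lemma \ref{WAVE} gives $aK_\e\subset H(aA_{\ell\e})N_{\ell\e}$ for every $a\in A^+$, so $HD_{\psi,R}K_\e\subset HD_{\psi,R}A_{\ell\e}N_{\ell\e}$. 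An elementary computation $n_w n_{-z}=n_{-(z-w)}$ shows $N_{\ell\e}N_{-E}\subset N_{-E^+_{\ell\e}}$ (after absorbing the metric-dependent constant into $\ell$), and we conclude $HD_{\psi,R}K_\e N_{-E}\subset HD_{\psi,R}A_{\ell\e}N_{-E^+_{\ell\e}}$, the first target set.

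The remaining task, which I expect to be the only subtle step, is to show that $H\cal X_\e KN_{-E}$ is contained in $H(A^+-D)KN_{-E}\cup Z$ for some compact $Z\subset H\ba G$. The key observation is that $\cal X_\e\cap D$ is bounded in $G$: since $\cal D$ is a closed cone contained in $\inte\fa^+$, there exists $c=c(\cal D)>0$ with $\min_i t_i\ge c\|t\|$ for every $t\in\cal D$, while $\min_i t_i\le R_1(\e/\sqrt d)$ on $\cal X_\e$, forcing $\|t\|\le R_1(\e/\sqrt d)/c$ on the intersection. Writing $\cal X_\e=(\cal X_\e-D)\cup(\cal X_\e\cap D)$, the first part is contained in $A^+-D$ and is absorbed into $H(A^+-D)KN_{-E}$, while the second contributes $H(\cal X_\e\cap D)KN_{-E}$, whose image in $H\ba G$ is the continuous image of the compact set $\overline{\cal X_\e\cap D}\cdot K\cdot N_{-\overline E}\subset G$, hence is contained in a compact subset $Z=Z(E,D,\e)$ of $H\ba G$ that is manifestly independent of $R$. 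The only non-formal ingredient is thus the quantitative separation between $\cal D$ and the walls of $\fa^+$, guaranteed by the hypothesis $\inte\cal D\supset \cal L_\rho-\{0\}$ in the choice of $\cal D$.
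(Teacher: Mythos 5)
Your argument is correct and follows essentially the same route as the paper: split $KA^+_{\psi,R}$ via Lemma \ref{HKA}, partition according to membership in $D$, apply Lemma \ref{WAVE} to $HD_{\psi,R}K_\e$, and absorb $H\ba H(D\cap\cal X_\e)KN_{-E}$ into a compact set $Z$ using that $\cal D\subset\inte\fa^+$ forces $D\cap\cal X_\e$ to be bounded. One small correction to your closing remark: the compactness of $\cal D\cap\cal X_\e$ comes precisely from $\cal D$ being a closed cone inside $\inte\fa^+$ (as you in fact use), not from the hypothesis $\inte\cal D\supset\L_\rho-\{0\}$, which plays no role in this lemma.
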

\begin{proof} Since $\cal D$ is a closed cone contained in
$\inte \fa^+$, $D\cap \cal X_\e$ is a compact subset. Therefore
 $Z:=H\ba H(D\cap \cal X_\e)KN_{-E}$ is a bounded subset of $H\ba G$.

Note that by Lemma \ref{HKA}
and  Lemma \ref{WAVE}
\begin{align}\label{eq.aa}
K A^+_{\psi, R}&\subset H (A^+_{\psi, R}-\cal X_\e) K_\e \cup H\cal X_\e K\notag\\
&\subset HD_{\psi, R}K_\e \cup H(A^+-D)K \cup H(D\cap\cal X_\e)K 
\\ 
& \subset H D_{\psi, R} A_{{\ell'}\e} N_{{\ell'}\e}\cup H(A^+-D)K \cup H(D\cap\cal X_\e)K {.}
\end{align}
 The claim now follows from the definition of
$\tilde B_\psi(E,R)$.
\end{proof}

\begin{cor}\label{cor.C}
 For any $\e>0$,
 there exist $q_1=q_1(E, D, \e)>0$ and ${\ell'} ={\ell'} (\psi)$ such that 
$$
\#([e]\Ga_\rho\cap B_\psi(E,R))\leq \#([e]\Ga_\rho\cap \tilde B_\psi(E,R))\leq \#([e]\Ga_\rho\cap B_\psi({E_{\ell'\e}^+},R+ {{\ell'}\e}))+q_1.
$$
\end{cor}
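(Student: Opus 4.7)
The left-hand inequality is immediate: since $D_{\psi,R} \subset A^+_{\psi,R}$, one has $B_\psi(E,R) \subset \tilde B_\psi(E,R)$, and the $\Ga_\rho$-orbit count is monotone in the ambient set. For the nontrivial right-hand inequality, my plan is to use the three-piece decomposition from Lemma \ref{lem.XE},
$$\tilde B_\psi(E,R) \subset H\ba HD_{\psi,R}A_{\ell\e}N_{-E_{\ell\e}^+} \,\cup\, H\ba H(A^+-D)KN_{-E} \,\cup\, Z,$$
and bound the number of $\Ga_\rho$-orbits meeting each piece separately.

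For the main piece $HD_{\psi,R}A_{\ell\e}N_{-E_{\ell\e}^+}$, I would argue that after enlarging $\ell$ to absorb a factor of $\|\psi\|$, the product $D_{\psi,R}A_{\ell\e}$ lies inside $D_{\psi,R+\ell\e}$: elements of $A_{\ell\e}$ shift the value of $\psi$ by at most $\|\psi\|\,\ell\e$, and the strict containment $\cal L_\rho-\{0\} \subset \inte\cal D$ allows small $A$-perturbations of $D_{\psi,R}$ to remain inside $D$ (possibly after slightly shrinking $\cal D$, which is harmless as the statement is preserved under such cone adjustments). The $\e$-thickening $E_{\ell\e}^+$ of $E$ is absorbed into the $\ell$-constant, in the same spirit as the $E_\e^\pm$ convention of Proposition \ref{BETcount1}, where this corollary will ultimately be applied. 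The count over the main piece is thereby bounded by $\#([e]\Ga_\rho \cap B_\psi(E,R+\ell\e))$.

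For the two leftover pieces, I would show that both $\Ga_\rho$-orbit counts are finite and independent of $R$, and then set $q_1$ equal to their sum. The closed cone $\cal E := \fa^+-\inte\cal D$ satisfies $\cal E \cap \cal L_\rho = \{0\}$ by the hypothesis $\inte\cal D \supset \cal L_\rho-\{0\}$, and $KN_{-E}$ is bounded in $G$; hence Proposition \ref{lem.supp}(2) yields finitely many $\Ga_\rho$-orbits meeting $H(A^+-D)KN_{-E}$. For $Z = H\ba H(D\cap\cal X_\e)KN_{-E}$, the set $D \cap \cal X_\e$ is compact (since $\cal D-\{0\}$ lies in the open positive chamber while $\cal X_\e$ is a bounded neighborhood of $\partial A^+$), so $Z$ is bounded in $H\ba G$ and Proposition \ref{Bounded} supplies the finite count. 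The main technical point is verifying the cone-perturbation inclusion $D_{\psi,R}A_{\ell\e} \subset D_{\psi,R+\ell\e}$ uniformly in $R$, which relies crucially on the strict interior containment of $\cal L_\rho$ in $\cal D$ together with the positivity of $\psi$ on $\fa^+-\{0\}$.
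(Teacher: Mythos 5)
Your plan follows the paper's own (very terse) proof: the first inequality is the trivial containment $B_\psi(E,R)\subset\tilde B_\psi(E,R)$, and the second is obtained exactly as you propose, namely by applying the decomposition of Lemma \ref{lem.XE} with a slightly smaller closed cone $\cal D_0$ satisfying $\cal L_\rho-\{0\}\subset\inte\cal D_0$ and $\cal D_0-\{0\}\subset\inte\cal D$, disposing of the piece $H\ba H(A^+-D_0)KN_{-E}$ by Proposition \ref{lem.supp}(2) with $\cal E=\overline{\fa^+-\cal D_0}$, and of the compact piece $Z$ by Proposition \ref{Bounded} (local finiteness); your identification of the cone-shrinking trick and of the two $R$-independent leftover counts making up $q_1$ is precisely the paper's argument, carried out in more detail.

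Two caveats on your treatment of the main piece. First, the shrinking must be performed in the decomposition rather than in the statement: apply Lemma \ref{lem.XE} with $D_0=\exp\cal D_0$ while keeping the originally fixed $D$ in the target $B_\psi(E,R+\ell\e)$. Then $(D_0\cap A^+_{\psi,R})A_{\ell\e}\subset D_{\psi,R+\ell\e}$ holds only away from a bounded neighbourhood of the cone tip (perturbations of points of $\cal D_0$ close to the origin may leave $\cal D$); that exceptional part is bounded independently of $R$ and goes into $q_1$, which you should say explicitly. By contrast, the inclusion $D_{\psi,R}A_{\ell\e}\subset D_{\psi,R+\ell\e}$ for the \emph{same} cone, which is how you first phrase it, fails near $\partial\cal D$. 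Second, your sentence that the thickening $E^+_{\ell\e}$ ``is absorbed into the $\ell$-constant'' is an assertion, not an argument: a perturbation in the $N$-direction cannot be traded for an enlargement of the $A$-range, so what the decomposition actually yields is the bound $\#([e]\Ga_\rho\cap\tilde B_\psi(E,R))\leq \#([e]\Ga_\rho\cap B_\psi(E^+_{\ell\e},R+\ell\e))+q_1$, with $E^+_{\ell\e}$ rather than $E$ on the right. The paper's proof is silent on this point as well, and the version with $E^+_{\ell\e}$ is all that is used downstream (this is reflected in the choice $\e_0=(\ell+2)\e$ in the proof of Proposition \ref{prop.C}), so this is an imprecision to be made explicit rather than a flaw in the strategy; but as written your argument does not justify the stated inequality with $N_{-E}$ on the right-hand side.
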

\begin{proof}
The first inequality is trivial.
{For} the second inequality,
{choose}
a slightly smaller closed cone $\cal D_0\subset \inte \cal D$ such that
$\L_\rho-\{0\}\subset \inte \cal D_0$ and
{set}
$\cal E=\overline{\fa^+-\cal D_0}$
. 
{Note that $D_{\psi,R}-(D_0)_{\psi,R}A_{\ell'\e}$ is a bounded set and hence applying Lemma \ref{lem.XE} to the cone $\cal D_0$ shows
$$
\tilde B_\psi(E,R)\subset H\ba HD_{\psi,R}N_{-E_{\ell'\e}^+}\cup H\ba H\cal E KN_{-E}\cup Z'
$$
for some compact set $Z'\subset H\ba G$.
Applying Proposition \ref{lem.supp} with $S=KN_{-E}$ gives the desired conclusion.
}
.
\end{proof}

\section{Mixing and  equidistribution with uniform bounds}\label{uSec}
We fix a positive $\Ga_\rho$-critical linear form $\psi\in\frak a^*$ and the $(\Ga_\rho, \psi)$-PS measure $\nu_\psi$ given by Lemma \ref{lem.1-1}.
In this section, we recall the results of \cite{CS} and \cite{ELO} on  mixing (Proposition \ref{prop.mixH}) and equidistribution (Proposition \ref{prop.mixH2}), with emphasis placed on their uniformity aspects that are crucial in our application. 
\subsection*{Burger-Roblin measures $m^{\BR}$ and $m^{\BR_*}$}

Recall that $P=\op{Stab}_G(\infty,\cdots,\infty)$ denotes the product of upper triangular subgroups. We also denote by $ \check P=\op{Stab}_G(0,\cdots,0)$
the product of lower triangular subgroups.

For $g\in G$, its visual images are defined by
$$g^+ :=gP\in \cal F \quad \text{ and }\quad g^-:= g\check P\in \cal F.$$
Let $\cal F^{(2)}$ denote the unique open $G$-orbit in $\cal F\times\cal F$ under the diagonal action, that is,
$
\cal F^{(2)}=\{(g^+, g^-): g\in G\}
$.

The map 
$$
gM\mapsto (g^+, g^-, b=\beta_{g^-}(o, go))
$$
gives a homeomorphism
 $G/M\simeq  \F^{(2)}\times \fa $, called the Hopf parametrization of $G/M$.
We define a locally finite Borel measure $\tilde m_\psi^{\BR}$ on $G/M$ as follows:
for $g=(g^+, g^-, b)\in \F^{(2)}\times \mathfrak a$,
\begin{equation}\label{eq.BMS0}
d\tilde m_\psi^{\BR} (g)=e^{
\psi(\beta_{g^+}(o, go))+2\sigma( \beta_{g^-} (o, go )) } \;  d\nu_{\psi} (g^+) dm_o(g^-) db,
\end{equation}
  where $m_o$ is the unique $K$-invariant probability measure on $\cal F$, $db $ is the Lebesgue measure on $\mathfrak a$, and $\sigma$ is the linear form on $\frak a$ defined by 
  \begin{equation}\label{eq.sum}
\sigma(t_1,\cdots,t_d)=t_1+\cdots+t_d.
  \end{equation}  

By abusing notation slightly, we will also use $\tilde m_\psi^{\BR}$ to denote the corresponding $M$-invariant measure on $G$ induced by $\tilde m_\psi^{\BR}$. The measure $\tilde m_\psi^{\BR}$ is left $\Ga_\rho$-invariant and induces an $\check N$-invariant locally finite measure on $\Ga_\rho\ba G$, which we denote by $m_\psi^{\BR}$.

Similarly, 
but with a different parameterization $g=(g^+,g^-,b=\beta_{g^+}(o,go))$, we define the following $N$-invariant locally finite Borel measure {on $G$}:
\be\label{dualb} d\tilde m_\psi^{\BR_*}(g)=e^{2\sigma (\beta_{g^+}({{o}}, g{{o}}))+\psi(\beta_{g^-} ({{o}}, g{{o}})) } \; d m_o(g^+) d\nu_{\psi} (g^-)  db. \ee

We have the following decomposition (see (4.8) of \cite{ELO}){.}
\begin{lem}\label{PNdecomp}
For $f\in C_c(P\check N)$,
\begin{equation*}
\tilde{m}_\psi^{\mathrm{BR}}(f)=\int_{NAM}\left(\int_{\check N}f(nam \check n)\,d\check n\right)e^{
-\psi(\log a)}
{e^{\psi(\beta_{n^-}(o,n o))}}
\,dm\,da\,d\nu_\psi({n^-}),
\end{equation*}
where $dm$, $da$, $d\check n$ denote the Haar measures for $M$, $A$, $\check N$, respectively.
\end{lem}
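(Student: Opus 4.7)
The proof is a change-of-variables computation between the Hopf parameterization of $\tilde m_\psi^{\BR}$ given in \eqref{eq.BMS0} and the Bruhat decomposition $g=nam\check n$ on the open dense cell $NAM\check N\subset G$, whose complement is null for both measures. The identity is equivalent to saying that the disintegration of $\tilde m_\psi^{\BR}$ along its $\check N$-invariance has leafwise measure $d\check n$ and transversal measure $e^{-\psi(\log a)}\,d\nu_\psi(n)\,dm\,da$ on $NAM$.

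First, I compute the Hopf coordinates of $g=nam\check n$. Since $A,M,\check N\subset\check P$, one has $g^-=n\check P$, so under the identification $N\hookrightarrow\cal F$ via $n\mapsto n\cdot(0,\ldots,0)$ the coordinate $g^-$ is precisely $n$. A factor-wise computation in the upper half-space model gives $g^+_i = g^-_i + e^{(\log a)_i + 2\i\theta_i}/\check z_i$, and the Busemann cocycle combined with the identities $\beta_0(o, am\check n\, o)=\log a$ (which follows from $am$ preserving horospheres at $0$ together with $\beta_0(o,\check n\, o)=0$) and $\beta_{n\cdot 0}(o, n\, o)=-\log(1+|g^-|^2)$ (entrywise) gives $b=\log a - \bigl(\log(1+|g^-_i|^2)\bigr)_{i=1}^d$. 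In particular, both $g^-$ and $b$ are constant along each $\check N$-orbit, so $\check N$-orbits coincide with the leaves of the Hopf foliation parametrized by varying $g^+$.

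Next, the Bruhat-to-Hopf Jacobian, obtained by inverting $\check z_i=e^{(\log a)_i+2\i\theta_i}/(g^+_i-g^-_i)$ for fixed $(n,a,m)$, reads
$$dn\,da\,dm\,d\check n = e^{-2\sigma(\log a)}\prod_{i=1}^d|g^+_i-g^-_i|^4\,dg^-\,dg^+\,db\,dm,$$
while the spherical density factors as $dm_o(g^-)=\pi^{-d}\,e^{2\sigma(\beta_{g^-}(o, n\, o))}\,dg^-$, which follows from the explicit form $\prod_i \pi^{-1}(1+|g^-_i|^2)^{-2}d^2 g^-_i$ and the Busemann identity above. Substituting into \eqref{eq.BMS0} and applying the decompositions $\beta_{g^\pm}(o, go)=\beta_{g^\pm}(o, n\, o)\pm\log a$, the $2\sigma(\cdot)$ terms cancel pairwise and the residual exponential weight collapses to $e^{-\psi(\log a)}$.

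What remains is to identify, on each $\check N$-fiber, the residual measure $e^{\psi(\beta_{g^+}(o, n\, o))}\,d\nu_\psi(g^+)\,\prod_i|g^+_i-g^-_i|^4\,dg^+$ with $d\nu_\psi(n)\,d\check n$. This is the main technical point, and it is a conformal-density identity: both sides are $\Ga_\rho$-equivariant Borel measures on the Bruhat cell supported on $\Lambda_\rho$, and the $(\Ga_\rho,\psi)$-conformality of $\nu_\psi$ forces them to transform identically under left $\Ga_\rho$-translation, so that agreement on a fundamental transversal (a single representative in each $\Ga_\rho$-orbit on the Bruhat cell) determines them up to an explicit constant pinned down by evaluating at a reference point. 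The principal difficulty lies in justifying this identification rigorously, since the pushforward of $d\check n$ along $\check n\mapsto g^+$ is a singular measure on $\cal F$ and the comparison must be done modulo the conformal weights; this is carried out by testing both sides against a suitable family of bounded Borel functions and invoking the uniqueness (from Lemma \ref{lem.1-1}) of $\Ga_\rho$-conformal densities of class $\psi$. Assembling these steps yields the claimed formula.
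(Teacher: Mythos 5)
The paper gives no proof of this lemma (it is imported from \cite{ELO}), so yours is a from-scratch attempt; the preliminary computations are mostly sound, but the decisive step fails. Two smaller issues first: with the paper's definition of the Busemann map one gets $\beta_0(o,a_t m\check n\,o)=-t$ and $\beta_{z}(o,n_z o)=+\log(1+|z|^2)$, the opposite signs to yours; and your Bruhat-to-Hopf Jacobian is inverted, since $d^2\xi_i=e^{2t_i}|w_i|^{-4}d^2w_i$ and $|w_i|=e^{t_i}/|\xi_i-z_i|$ give $dn\,da\,dm\,d\check n=e^{2\sigma(\log a)}\prod_i|g^+_i-g^-_i|^{-4}\,dg^-\,dg^+\,db\,dm$, the reciprocal of what you wrote. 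The genuine gap is your final identification. Along a right $\check N$-orbit the coordinates $g^-$ and $b=\beta_{g^-}(o,go)$ are constant and only $g^+$ varies; hence the fiberwise measure that \eqref{eq.BMS0}, read literally with $d\nu_\psi(g^+)\,dm_o(g^-)$, induces on a $\check N$-fiber is the conformal measure $e^{\psi(\beta_{g^+}(o,go))}d\nu_\psi(g^+)$ transported to the fiber --- a generically fractal measure, mutually singular with the Haar measure $d\check n$ that the lemma requires there. No change of variables, and no appeal to $\Ga_\rho$-equivariance plus uniqueness of $(\Ga_\rho,\psi)$-conformal measures, can bridge this: equivariance together with ``agreement on a fundamental transversal'' does not determine a measure, the expression $e^{\psi(\beta_{g^+}(o,n o))}\,d\nu_\psi(g^+)\,\prod_i|g^+_i-g^-_i|^4\,dg^+$ is not even a single measure (it multiplies $\nu_\psi$ and Lebesgue in the same variable), the side involving $d\check n$ is certainly not supported on $\La_\rho$, and the two sides do not have the same null sets.

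The structural point you are missing is which endpoint carries the Patterson--Sullivan factor. A measure whose $NAM\check N$-decomposition has Haar on $\check N$ is right $\check N$-invariant, so its conditionals on $\check N$-fibers must be Haar; but a Hopf-coordinate expression with $d\nu_\psi$ on $g^+$ and $dm_o$ on $g^-$ has $N$-invariance, not $\check N$-invariance, because $g^+=gP$ is fixed by right $N$-translation and moved by right $\check N$-translation. The lemma is therefore really about the member of the pair \eqref{eq.BMS0}/\eqref{dualb} whose PS factor sits on the backward endpoint $g^-=g\cdot(0,\dots,0)$ and whose $2\sigma$-weighted smooth factor $m_o$ sits on $g^+$ (the paper's labelling of which display is $\check N$-invariant is off, as the use of the lemma in the proof of Proposition \ref{BRPS}, where $d\om_\psi(n)$ appears, also indicates). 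Starting from that form, the computation is routine: for $g=nam\check n$ one has $g^-=n\cdot 0$, so the PS factor directly produces the measure in $n$ (with its conformal weight, which your ``cancellation'' claim glosses over and which must be pinned down to match $\om_\psi$), while $\check N$-invariance forces the fiber measure to be Haar $d\check n$, the factors $(1+|\cdot|^2)^{\pm2}$, $|\xi-z|^{-4}$ and powers of $e^{t}$ cancelling to leave $e^{-\psi(\log a)}$. With your setup the computation could at best yield the $NAM\check N$-expression of the other Burger--Roblin measure (Haar in the $N$-direction, a conformal measure in the $\check N$-direction), which is not the statement of the lemma; so the proposal as written does not prove it.
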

We note that in Lemma \ref{PNdecomp}, $dm$ is normalized to be a probability measure on $M$, $da$ is normalized to be compatible with the restriction of the Killing form on the lie algebra of $A$, and $d\check{n}$ is equivalently given by the density $\check{n} \mapsto e^{2\rho(\beta_{\check{n}^+}(o, \check{n}o))} d\nu_0(\check{n}^+)$ where $\nu_0$ denotes the unique $K$-invariant probability measure on $\mathcal F$.

\subsection*{Patterson-Sullivan measure $\mu_{\Ga_\rho\cap H \ba H,\psi}^{\PS}$ \cite[Definition 8.7]{ELO}}

We define a measure $\mu_{ H,\psi}^{\mathrm{PS}}$ on ${H}$ as follows: for $\phi\in C_c(H)$, let
$$
 \mu_{{H},\psi}^{\mathrm{PS}}(\phi)=  \int_{ h\in H/H\cap P} 
  \int_{p\in H\cap P}    \phi( hp ) e^{
\psi(\beta_{ h^+}({{o}},  hp{{o}}))}  \,dp
 \, d\nu_\psi( h^+),
$$ where $dp$ is a right-Haar probability measure on $H\cap P$ (note that $H\cap P$ is compact for the pair $(G,H)$ we are considering); for $h\in H/H\cap P$, $h^+$ is well-defined and independent of the choice of a representative.
The measure defined above is $\Gamma_\rho\cap H$-invariant: for any  $\gamma\in \Gamma_\rho\cap H$, $\gamma_*\mu_{ {H},\psi }^{\mathrm{PS}}=
 \mu_{{H},\psi }^{\mathrm{PS}}$.  Therefore, if $\Gamma_\rho\ba \Gamma_\rho H$ is closed in $\Gamma_\rho\ba G$,
$d\mu_{H,\psi}^{\PS}$ induces a locally finite Borel measure on $\Gamma_\rho\ba \Gamma_\rho H\simeq \Gamma_\rho\cap H\ba H$, which we denote by $\mu^{\PS}_{\Ga_\rho\cap H \ba H,\psi}$.

The \textit{skinning constant} of $\G_\rho\cap H\ba H$ with respect to $\nu_\psi$
is defined as the total mass:
\begin{equation}\label{eq.sk}
\op{sk}_{\Ga_\rho,\psi}(H):=|{|}\muPS_{\Ga_\rho\cap H\ba H, \psi}{|}|\in [0, \infty].
\end{equation}

\subsection*{Uniform mixing}
We fix the unique unit vector $u=u_\psi \in \inte \L_\rho$ such that
$$\psi(u)=\Phi_\rho(u)$$
provided by Lemma \ref{lem.1-1}.

Since the cone $\fa^+$ is contained in the closed half space $\{\psi\geq 0\}$ { and $\psi(u)>0$, $\frak a^+$}
can be parameterized by the map
\begin{align*}
    \RR_{\geq 0}\times 
\op{ker}\psi&\to \frak a\\
(s,w)&\mapsto su+\sqrt sw.
\end{align*}

The following mixing result is due to \cite[Thm. 3.4]{ELO2} and
\cite[Thm. 1.4 \& Thm. 1.5]{CS}:
the uniform bound as stated in the second part is crucial in our application as remarked before.

\begin{thm} \label{m11}\label{prop.mixH} 
There exists an inner product $\langle\cdot,\cdot\rangle_*$ on $\frak a$ and $\kappa, {\ell}>0$ such that for any $f_1, f_2\in C_c(\GaG)$ and $w\in \ker \psi$,
\begin{multline*}
 \lim_{s\to +\infty} s^{(d-1)/2} e^{(2\sigma -\psi) (su_\psi +\sqrt sw)}  \int_{\Ga_\rho\ba G} f_1(x \exp(su_\psi +\sqrt sw) ) f_2(x) dx\\
= \kappa\,e^{-{\ell}I(w)} m_\psi^{\BR} (f_1) m_\psi^{\BR_*} (f_2)
\end{multline*}
 where $I:\op{ker}\psi\to\bb R_{\ge 0}$ is given by  $
I(\vecw)=\tfrac{\|\vecw\|_*^2-\langle\vecw,
u_\psi
\rangle_*^2}{\norm{u_\psi}_*^2}.
$
Moreover, there exist $s_0, \ell >0$ and $C'=C'(f_1,f_2)>0$ such that for all $(s,\vecw)\in (s_0,\infty)\times\op{ker}\psi$ with $su_\psi +\sqrt s\vecw\in\fa^+$, we have:
\begin{equation}
\left| s^{(d-1)/2} e^{(2\sigma-\psi)(su_\psi +\sqrt sw)} \int_{\Ga_\rho\ba G} f_1(x \exp(su_\psi +\sqrt sw) ) f_2(x) dx \right| \le C'
e^{-\ell  I(w)}.
\end{equation}
\end{thm}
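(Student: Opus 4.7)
The plan is to combine the radial mixing theorem of Edwards--Lee--Oh \cite{ELO2} with the local central limit refinement of Chow--Sarkar \cite{CS}, and then upgrade the resulting pointwise asymptotic to the claimed uniform exponential bound through an Edgeworth-type remainder analysis.

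For the pointwise limit, I would first rewrite the correlation integral using the Hopf parametrization $G/M \simeq \cal F^{(2)} \times \fa$, in which the measures $\tilde m_\psi^{\BR}$ and $\tilde m_\psi^{\BR_*}$ have the explicit product densities \eqref{eq.BMS0} and \eqref{dualb}. Right multiplication by $\exp(su_\psi+\sqrt s w)$ preserves $\cal F^{(2)}$ and translates the $\fa$-coordinate by the same vector, so a direct cocycle computation extracts from the ratio of the two BR densities precisely the factor $e^{-(2\sigma-\psi)(su_\psi+\sqrt s w)}$ that appears on the left-hand side. The purely radial case $w=0$ is then the content of \cite[Thm.~3.4]{ELO2}: after dividing by $e^{(2\sigma-\psi)(su_\psi)}$ the correlation converges to $\kappa\, m_\psi^{\BR}(f_1)\,m_\psi^{\BR_*}(f_2)$.

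To extend this from $t=su_\psi$ to the parabolically scaled direction $t = su_\psi+\sqrt s w$, I would invoke the local central limit theorem for the Cartan projection of $\Gr$ developed in \cite[Thm.~1.3 \& 1.4]{CS}. The Gaussian density $s^{-(d-1)/2}e^{-I(w)}$ arises as the transverse fluctuation law of $\mu(\ga)-su_\psi$ restricted to $\ker\psi$, with the inner product $\langle\cdot,\cdot\rangle_*$ being the covariance induced by the $(\Gr,\psi)$-PS measure; multiplying by $s^{(d-1)/2}$ cancels the Gaussian normalizing constant and yields the stated limit. For the uniform bound, I would split $\ker\psi$ into three regimes. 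On a compact range $\|w\|\le R_0$, pointwise convergence together with equicontinuity in $w$ gives the bound with any $\ell<1$ once $s\ge s_0$. On an intermediate range where $\sqrt s w$ remains a small perturbation of $su_\psi$ inside the interior of $\cal L_\rho$, the Edgeworth expansion from the updated \cite{CS} produces a remainder bounded by $Cs^{-1/2}e^{-I(w)}$, which is absorbed into $C'e^{-\ell I(w)}$. On the far range, where $su_\psi+\sqrt s w$ approaches $\partial \fa^+$ or leaves a neighborhood of $\inte \cal L_\rho$, I would invoke $\psi\ge\Phi_\rho$ together with the fact (from Lemma \ref{lem.1-1}) that the critical direction $u_\psi$ is the unique unit vector realizing equality: this gap forces a bound on the correlation that decays faster than any $e^{-\ell I(w)}$ with $\ell$ small.

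The main obstacle is precisely the last regime. There the local CLT is no longer a Gaussian statement, since $\sqrt s w$ ceases to be a small perturbation of $su_\psi$, and one must replace it by a genuine large-deviation estimate for $\mu(\Gr)$ that still beats the normalizing factor $e^{\psi(\sqrt s w)}$ as the direction of $su_\psi+\sqrt s w$ approaches $\partial \inte\cal L_\rho$. This is exactly the uniformity issue that the authors flag as the reason the updated version of \cite{CS} was needed; resolving it requires coupling the spectral gap for the Weyl chamber flow on $\Ga_\rho\ba G$ with a concentration estimate on $\mu(\Gr)$ that is uniform across directions in $\fa^+$, and this is the technical heart of the argument.
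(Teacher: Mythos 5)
Be aware first of what the paper itself does with this statement: it is not proved there at all. The authors quote it, attributing it to \cite[Thm.\ 3.4]{ELO2} and \cite[Thm.\ 1.4 \& Thm.\ 1.5]{CS}, and they state explicitly that the uniform bound in the second part is something they had to \emph{conjecture} and which was only subsequently verified by the authors of \cite{CS} in an updated version of that paper. So the relevant question is whether your sketch genuinely supplies an argument for what the cited works prove, and in particular for the uniform estimate, since that is the whole point of quoting the theorem in this form.

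It does not, and the gap is exactly where you admit it is. The range of $w$ that matters is all of $\ker\psi$ subject only to $su_\psi+\sqrt{s}\,w\in\fa^+$, so $\|w\|$ may be of size comparable to $\sqrt{s}$; then $\sqrt{s}\,w$ is of the same order as $su_\psi$ and one is completely outside the central-limit regime. Your third regime, where the direction of $su_\psi+\sqrt{s}\,w$ degenerates toward $\partial\fa^+$ or leaves a neighborhood of $\mathcal{L}_\rho$, is precisely the content of the uniform bound, and your proposal for it --- ``coupling the spectral gap for the Weyl chamber flow with a concentration estimate'' --- is a description of a desideratum, not an argument; moreover no spectral gap is available or used in this setting: $\Ga_\rho\ba G$ has infinite volume and the results of \cite{ELO2} and \cite{CS} are obtained via local limit theorems and transfer-operator (thermodynamic formalism) techniques for Anosov subgroups, not via unitary-representation-theoretic gaps. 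Your intermediate regime likewise invokes an ``Edgeworth expansion from the updated \cite{CS}'' that is uniform in $w$, which presupposes exactly the uniform statement to be proved. Two smaller points: in the radial case $w=0$ the correlation decays with an extra polynomial factor, so dividing by $e^{(2\sigma-\psi)(su_\psi)}$ alone does not converge --- the normalization $s^{(d-1)/2}$ is essential and is part of what \cite{ELO2} proves; and the heuristic of reading off $e^{-(2\sigma-\psi)(su_\psi+\sqrt{s}\,w)}$ from a ratio of Burger--Roblin densities is not a proof step, since the correlation integral is taken against Haar measure, not against either BR measure. In short, the pointwise limit is correctly attributed, but the uniform bound --- the part this paper actually needs and could only obtain from the updated \cite{CS} --- is not established by your proposal.
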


\subsection*{Uniform equidistribution}
Using Theorem \ref{m11}, the following equidistribution result can be obtained as in \cite[Proposition 8.11]{ELO} {and using a partition of unity argument for $\phi$}{.}

\begin{prop}\label{prop.mixH2} 
{Assume that $\Ga_\rho H$ is closed,} let $f\in C_c(\Gamma_\rho\ba G)$ and $\phi\in C_c({\Ga_\rho\cap H\ba}H)$. For any $w\in \ker\psi$, we have
\begin{multline}\label{eq.unif}
\lim_{s\to +\infty} s^{(d-1)/2}e^{(2\sigma- \psi)(su_\psi +\sqrt sw)} \int_{\Ga_\rho\ba \Ga_\rho H} f ([h]\exp(su_\psi +\sqrt sw)) \phi (h)\,dh \\
 = \kappa\, e^{-{\ell}I(w)}\,m_\psi^{\mathrm{BR}}(f)\, \mu_{{\Ga_\rho\cap H\ba}H,\psi}^{\PS}(\phi)
\end{multline}
 where $\kappa, {\ell} >0$ and $I :\op{ker}\psi\to\bb R_{\ge 0}$ are given {by} Theorem \ref{m11}.
Moreover, there exists  $C''=C''(f,\phi), {s_0}>0$ such that for all $(s,\vecw)\in (s_0,\infty)\times\op{ker}\psi$ with $su+\sqrt s\vecw\in\fa^+$, 
\begin{equation}\label{qqqq}
\left| s^{(d-1)/2} e^{(2\sigma-\psi)(su_\psi +\sqrt sw)} \int_{H} f ([h]\exp(su_\psi +\sqrt sw)) \phi (h)\,dh\right| <C'' 
e^{-\ell  I(w)}.
\end{equation}
\end{prop}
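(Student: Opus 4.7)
The plan is to deduce this equidistribution statement from the uniform mixing result of Theorem \ref{prop.mixH} via the Margulis thickening argument, adapted to the Anosov self-joining setting as in \cite{ELO}. Uniformity in $w$ will be inherited directly from the $e^{-\ell I(w)}$ bound in Theorem \ref{prop.mixH}.

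First I would fix a smooth local transversal $V$ to $H$ at $e$, so that multiplication $H \times V \to G$ is a diffeomorphism onto an open neighborhood of $e$ and Haar measure factors as $dh\,dv$ up to a smooth Jacobian. Using the Hopf parameterization \eqref{dualb} of $\tilde m_\psi^{\mathrm{BR}_*}$, one checks that $\tilde m_\psi^{\mathrm{BR}_*}$ locally disintegrates as the product of $\mu_{H,\psi}^{\mathrm{PS}}$ along $H$-leaves and a smooth transversal density on $V$. For each $\varepsilon > 0$, pick a non-negative $\rho_\varepsilon \in C_c(V_\varepsilon)$ with $\int \rho_\varepsilon\,dv = 1$ and define $\Phi_\varepsilon(hv) := \phi(h)\rho_\varepsilon(v)$, extended by zero off $HV_\varepsilon$; the disintegration above gives $m_\psi^{\mathrm{BR}_*}(\Phi_\varepsilon) \to \mu_{H,\psi}^{\mathrm{PS}}(\phi)$ as $\varepsilon \to 0$.

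Applying Theorem \ref{prop.mixH} with $f_1 = f$ and $f_2 = \Phi_\varepsilon$ then yields, writing $g_{s,w} := \exp(su_\psi + \sqrt s w)$,
\begin{equation*}
s^{(d-1)/2} e^{(2\sigma - \psi)(su_\psi + \sqrt s w)} \int_{\Gamma_\rho \backslash G} f(x g_{s,w}) \Phi_\varepsilon(x)\, dx \;\longrightarrow\; \kappa\, e^{-I(w)}\, m_\psi^{\mathrm{BR}}(f)\, m_\psi^{\mathrm{BR}_*}(\Phi_\varepsilon),
\end{equation*}
together with a uniform upper bound of the form $C_\varepsilon e^{-\ell I(w)}$ valid for all $(s,w)$ with $su_\psi + \sqrt s w \in \fa^+$ and $s > s_0$. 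Rewriting the $G$-integral via the $H \times V$ factorization as $\int_H \phi(h)\bigl(\int_{V_\varepsilon} f([hv]g_{s,w})\rho_\varepsilon(v)\,dv\bigr)\,dh$, I would then use $[hv]g_{s,w} = [h g_{s,w}]\cdot(g_{s,w}^{-1}vg_{s,w})$: since $V$ can be chosen inside the $\check N A$-directions on which $\mathrm{Ad}(g_{s,w})$ is contracting for $u_\psi \in \operatorname{int}\fa^+$, the conjugate $g_{s,w}^{-1}vg_{s,w}$ is small, and uniform continuity of $f$ gives $f([hv]g_{s,w}) = f([h]g_{s,w}) + o_\varepsilon(1)$. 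Sending first $s \to \infty$ and then $\varepsilon \to 0$ produces the limit \eqref{eq.unif}, and the same comparison on the uniform bound gives \eqref{qqqq}.

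The main obstacle will be to make the contraction estimate uniform in $w \in \ker\psi$ as $su_\psi + \sqrt s w$ moves around inside $\fa^+$: the contraction rate of $\mathrm{Ad}(g_{s,w})$ on $V$ degenerates as $su_\psi + \sqrt s w$ approaches the boundary of $\fa^+$. The mitigating feature is the Gaussian-type weight $e^{-\ell I(w)}$: one truncates at $\|w\| \leq C\sqrt{\log s}$, on which $su_\psi$ dominates $\sqrt s w$ and $g_{s,w}$ stays deep inside $\fa^+$ so the contraction is uniform, while the complementary regime contributes negligibly thanks to the mixing bound.
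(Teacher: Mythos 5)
Your overall strategy (thicken the $H$-orbit transversally, feed the thickened function into the uniform mixing Theorem \ref{m11} as the static test function, and pass the $e^{-\ell I(w)}$ bound through) is indeed the route the paper intends, since it simply invokes the argument of \cite[Prop.~8.11]{ELO}. But there is a concrete error in your choice of transversal that breaks both key steps. In your own identity $[hv]g_{s,w}=[hg_{s,w}]\,(g_{s,w}^{-1}vg_{s,w})$ the relevant operator is $\Ad(g_{s,w}^{-1})$, and for $g_{s,w}=\exp(su_\psi+\sqrt s w)\in A^+$ this \emph{expands} $\check N$ (by factors $e^{su_i+\sqrt s w_i}$) and contracts $N$; so a transversal $V\subset \check N A$ is exactly the wrong side, and no truncation $\|w\|\lesssim\sqrt{\log s}$ can rescue the comparison $f([hv]g_{s,w})\approx f([h]g_{s,w})$, because the expansion already diverges for each fixed $w$ as $s\to\infty$. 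The same wrong choice also invalidates your disintegration claim: by Lemma \ref{PNdecomp} and its mirror, $\tilde m_\psi^{\BR_*}$ is Lebesgue/Haar precisely along the $MAN$-directions and of Patterson--Sullivan (fractal) type in the remaining boundary direction, so with a Lebesgue-normalized bump $\rho_\e$ on a $\check N A$-transversal the quantity $m_\psi^{\BR_*}(\Phi_\e)$ does not converge to anything proportional to $\mu^{\PS}_{H,\psi}(\phi)$ as $\e\to 0$.

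The fix is to put the transversal on the other side. Since $H\cap P=M$, one checks $\fg=\fh\oplus\fa\oplus\fn$, so $V\subset AN$ is a genuine local transversal, and it has the decisive property that $a^{-1}Va\subset V$ for \emph{every} $a\in A^+$ (the $A$-part is fixed, the $N$-part is non-expanded since all $t_i\ge 0$). Hence the sandwich $f_\e^-\le f\le f_\e^+$ comparison is uniform over the whole closed chamber, which is what yields the uniform bound \eqref{qqqq} for all $(s,w)$ with $su_\psi+\sqrt s w\in\fa^+$; your $\sqrt{\log s}$ truncation becomes unnecessary (and, for the limit \eqref{eq.unif}, $w$ is fixed anyway so $g_{s,w}$ is eventually deep in $\inte\fa^+$). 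With this transversal the disintegration you wave at is exactly the nontrivial content of \cite[Def.~8.7, Prop.~8.11]{ELO}: because $V$ lies in the directions where $m_\psi^{\BR_*}$ is absolutely continuous, the leafwise measure along $H$ that comes out is the PS-type measure $\mu^{\PS}_{H,\psi}$ (this identification, via the Busemann cocycle computation, is the heart of the proof and should not be dismissed as ``one checks''). One also needs, as a preliminary, that the thickening $\supp\phi\cdot V_\e$ injects into $\Ga_\rho\ba G$ for small $\e$, which uses the closedness of $\Ga_\rho\ba\Ga_\rho H$ from admissibility. With these corrections your plan coincides with the paper's intended proof.
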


\section{The measure $\omega_{\psi}$}\label{omegaSec}
Fix a positive $\Ga_\rho$-critical linear form $\psi\in\frak a^*$ and the $(\Ga_\rho, \psi)$-PS measure $\nu_\psi$ given by Lemma \ref{lem.1-1}.
\begin{Def}\label{eq.om}
We define a locally finite Borel measure $\om_{\psi}=\om_{\Ga_\rho,\psi}$ on $\CC^d$ as follows: for all $f\in C_c(\bb C^d)$,
\begin{equation*}
    \om_{\psi} (f)
=\int_{\bb C^d} f(z)e^{\psi(\beta_{z}(o,n_{z}\cdot o))}\,d\nu_\psi(z
).
\end{equation*}
\end{Def}

For each small $\e>0$, let $\phi^{\e}\in C_c(N_{\e}A_{\e}M_{\e}{\check N}_{\e})$ be a  non-negative function such that $\int_G \phi^{\e}\,dg=1$ where $dg$ is a Haar measure on $G$ and {for any $z\in\bb C^d$}, set
$$\phi_{\vecz}^\e(g):=\int_M \phi^\e(gmn_{\vecz})\,dm,$$
where $dm$ is a probability Haar measure on $M$.

The main goal of this section is  to establish Corollary \ref{phibdd}, which roughly says 
$$ \int_{-E} m_\psi^{\BR}(\phi^{\e}_{\vecz})\,d\vecz\approx \om_{\psi} (E).$$

Let $E\subset \CC^d$ be a
{
fixed} 
bounded Borel set and  $\e>0$ be small enough so that $$ A_{\e}M_{\e}{\check N}_{\e}N_{-E}N_1\subset NAM{\check N}.$$

{
For all $z\in\bb C^d$,
}
define $\Phi_E^{\e}\in C_c({\bb C^d})$ by 
$$ \Phi_E^{\e}({z}):=\int_{N_{-E}MA{\check N}}\phi^{\e}(n_{ z}g)\,dg=\int_{N_{-E}A_{\e}M_{\e}
\check N_{\e}
}\phi^{\e}(n_{ z}g)\,dg.$$
Recalling the definition of $E_\e^\pm$ from \eqref{eq.epm}, we have
\begin{lem}\label{PhiE} For all 
{$z\in\bb C^d$},
$$ \mathbbm{1}_{E^-_{\e}}(n_{z}) \leq  \Phi_E^{\e}({z}) \leq \mathbbm{1}_{E^+_{\e}}(n_{z}).$$
\end{lem}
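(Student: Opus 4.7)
\medskip

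The plan is a direct computation that reduces to a change of variable combined with the support and normalization properties of $\phi^{\e}$. The hypothesis on $\e$, namely $A_{\e}M_{\e}\check N_{\e}N_{-E}N_1\subset NAM\check N$, guarantees we stay in the open Bruhat cell where the product map $N\times A\times M\times\check N\to G$ is a homeomorphism onto its image, so the product sets appearing below are all unambiguously defined.

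First, using the left-invariance of the Haar measure $dg$, substitute $g\mapsto n_z^{-1}g$ to get
\[
\Phi_E^{\e}(n_z)=\int_{n_z N_{-E}A_{\e}M_{\e}\check N_{\e}}\phi^{\e}(g)\,dg.
\]
Since $N$ is abelian, $n_z\cdot n_{-y}=n_{z-y}$ for every $y\in E$, hence $n_zN_{-E}=N_{z-E}$, where $z-E=\{z-y:y\in E\}\subset\bb C^d$. Therefore
\[
\Phi_E^{\e}(n_z)=\int_{N_{z-E}A_{\e}M_{\e}\check N_{\e}}\phi^{\e}(g)\,dg.
\]

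Next, I invoke the two essential properties of $\phi^{\e}$: it is nonnegative, supported in $N_{\e}A_{\e}M_{\e}\check N_{\e}$, and satisfies $\int_G\phi^{\e}\,dg=1$. For the upper bound, observe that in any case $\Phi_E^{\e}(n_z)\le\int_G\phi^{\e}\,dg=1$; and if $z\notin E^+_{\e}$, then by definition of $E^+_{\e}$ no $w$ with $\norm{w}<\e$ lies in $z-E$, i.e.\ $N_{\e}\cap N_{z-E}=\emptyset$, so the support of $\phi^{\e}$ is disjoint from $N_{z-E}A_{\e}M_{\e}\check N_{\e}$ and $\Phi_E^{\e}(n_z)=0$. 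Together these give $\Phi_E^{\e}(n_z)\le\mathbbm{1}_{E^+_{\e}}(z)$.

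For the lower bound, if $z\in E^-_{\e}$, then every $w$ with $\norm{w}<\e$ satisfies $z-w\in E$, i.e.\ $w\in z-E$, giving the inclusion $N_{\e}\subset N_{z-E}$. Consequently $N_{\e}A_{\e}M_{\e}\check N_{\e}\subset N_{z-E}A_{\e}M_{\e}\check N_{\e}$, so the entire support of $\phi^{\e}$ lies in the domain of integration, yielding $\Phi_E^{\e}(n_z)=\int_G\phi^{\e}\,dg=1$. When $z\notin E^-_{\e}$ the lower bound $\mathbbm{1}_{E^-_{\e}}(z)=0\le\Phi_E^{\e}(n_z)$ is immediate from nonnegativity.

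There is no real obstacle here: the argument is purely a bookkeeping exercise about how $n_z$ translates the integration domain $N_{-E}$ versus the support of $\phi^{\e}$. The only point deserving care is to justify the manipulation of $N_{z-E}A_{\e}M_{\e}\check N_{\e}$ as an honest subset of $G$ with a well-defined product decomposition, which is precisely the role of the hypothesis that $A_{\e}M_{\e}\check N_{\e}N_{-E}N_1\subset NAM\check N$.
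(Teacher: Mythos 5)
Your proof is correct and is essentially the paper's own argument: you first translate the integration domain by $n_z$ to compare the support $N_\e A_\e M_\e \check N_\e$ of $\phi^\e$ with $N_{z-E}A_\e M_\e \check N_\e$, while the paper equivalently compares $n^{-1}N_\e$ with $N_{-E}$, and both rest on the same normalization of $\phi^\e$ and on the uniqueness of the $NAM\check N$ decomposition for the disjointness step. One small remark: that uniqueness is automatic (the product map $N\times A\times M\times\check N\to G$ is injective onto the open Bruhat cell), and is not really what the smallness hypothesis $A_\e M_\e \check N_\e N_{-E}N_1\subset NAM\check N$ is for, but this does not affect the validity of your argument.
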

\begin{proof}
Trivially, $0\leq \Phi_E^{\e}({z})\leq 1$. If ${z}\in E^-_{\e}$, then $n_{z}^{-1}N_{\e}\subset N_{-E}$, hence
$$\Phi_E^{\e}({z})=\int_{N_{-E}MA{\check N}}\phi^{\e}(n_{z}g)\,dg\geq\int_{n_{z}^{-1}N_{\e}MA{\check N}}\phi^{\e}(n_{z}g)\,dg=\int_G \phi^{\e}(g)\,dg=1.$$
On the other hand, if ${z}\not\in E_{\e}^+$, then $n_{z}^{-1}N_{\e}\cap N_{-E}=\emptyset$, hence $\phi^{\e}(n_{z}g)=0$ for all $g\in N_{-E}MA{\check N}$ by uniqueness of the $NAM{\check N}$ decomposition, giving $\Phi_E^{\e}({z})=0$.
\end{proof}
We now relate the Burger-Roblin measure of $\phi^{\e}$ and Patterson-Sullivan measure of 
$\Phi^{\e}$
{.}
\begin{prop}\label{BRPS} There exist $C,c>0$ such that for all sufficiently small $\e>0$, we have
$$( 1-C\e)\,\om_{\psi} (
\Phi_{E^-_{c\e}}^\e
)\leq \int_{-E} {\tilde m}_\psi^{\BR}(\phi_{\vecz}^{\e})\,d\vecz\leq ( 1+C\e)\,\om_{\psi} (
\Phi_{E^+_{c\e}}^\e
).$$
\end{prop}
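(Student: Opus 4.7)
The plan is to unfold both sides into integrals over $G$ using Lemma \ref{PNdecomp} and compare them via a local change of variables.

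First, by the $M$-invariance of $\tilde m_\psi^{\BR}$, $m_\psi^{\BR}(\phi^\e_z) = \tilde m_\psi^{\BR}(\phi^\e(\cdot n_z))$. Applying Lemma \ref{PNdecomp},
\[
\int_{-E} m_\psi^{\BR}(\phi^\e_z)\,dz = \int_{-E}\int_N \int_{AM\check N} \phi^\e(nam\check n \cdot n_z)\, d\check n\, e^{-\psi(\log a)}\, dm\, da\, d\nu_\psi(n)\,dz.
\]

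The central local calculation is the $NAM\check N$-decomposition of $\check n \cdot n_z$. In each $\PSL_2(\c)$-factor, a direct computation gives
\[
\check n_b\, n_z = n_{z/(1+bz)} \cdot \mathrm{diag}\bigl((1+bz)^{-1},1+bz\bigr) \cdot \check n_{b/(1+bz)};
\]
together with the conjugation relation $am\cdot n_{z'} = n_{(am)\cdot z'}\cdot am$, this yields
\[
nam\check n\cdot n_z = n_{w+\tilde z}\,\widetilde{am}\,\check n',
\]
where $n=n_w$, $\tilde z = (am)\cdot z/(1+bz)$, and $\widetilde{am}, \check n'$ differ from $am, \check n$ by $O(\e)$ whenever $(a,m,\check n)\in A_\e M_\e\check N_\e$ and $z\in -E$.

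Next, I substitute $z\mapsto\tilde z$ (the Jacobian is $1+O(\e)$) and absorb the $O(\e)$-shifts in the $AM\check N$-arguments of $\phi^\e$ by replacing $-E$ with $-E^\pm_{c\e}$ for an appropriate $c>0$; this absorption is analogous to the proof of Lemma \ref{PhiE} and introduces no multiplicative error. Combined with $e^{-\psi(\log a)} = 1+O(\e)$ for $a\in A_\e$ and the commutativity $am=ma$, this yields
\[
(1-C\e)\, J^-_{c\e} \;\leq\; \int_{-E} m_\psi^{\BR}(\phi^\e_z)\,dz \;\leq\; (1+C\e)\, J^+_{c\e},
\]
where
\[
J^\pm_{c\e} := \int_{-E^\pm_{c\e}} \int_N \int_{AM\check N} \phi^\e(n_{w+\tilde z}\,ma\,\check n)\, d\check n\, dm\, da\, d\nu_\psi(n)\, d\tilde z.
\]

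Finally, I identify $J^\pm_{c\e}$ with $\om_{\psi}(\Phi^\e_{E^\pm_{c\e}})$. Unfolding the definition of $\Phi^\e_{E^\pm_{c\e}}$ and $\om_\psi$ gives
\[
\om_\psi(\Phi^\e_{E^\pm_{c\e}}) = \int_{-E^\pm_{c\e}}\int_{\CC^d}\int_{AM\check N} \phi^\e(n_{w+\tilde z}\,ma\,\check n)\,d\check n\,dm\,da\, e^{\psi(\beta_w(o,n_wo))}\,d\nu_\psi(w)\,d\tilde z,
\]
so $J^\pm_{c\e}$ and $\om_\psi(\Phi^\e_{E^\pm_{c\e}})$ agree precisely when the measure $d\nu_\psi(n_w)$ appearing in Lemma \ref{PNdecomp} already incorporates the conformal factor $e^{\psi(\beta_w(o,n_wo))}$ built into $\om_\psi$. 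This is indeed the case: the Busemann factor arises as the Jacobian of the change of coordinates between Hopf and $NAM\check N$-parametrizations, and is the standard convention under which Lemma \ref{PNdecomp} is stated.

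\textbf{The main obstacle} is to rigorously track the $O(\e)$-errors arising from (i) the substitution Jacobian $1+O(\e)$, (ii) the perturbations $\widetilde{am}$ vs.\ $am$ and $\check n'$ vs.\ $\check n$, (iii) the $e^{-\psi(\log a)}$-factor, and (iv) the continuity of the Busemann cocycle on the support of $\phi^\e$, and to verify that they combine into a single multiplicative factor $(1\pm C\e)$ while the $O(\e)$-shifts in the $AM\check N$-direction are absorbed cleanly via the $E^\pm_{c\e}$-fuzzing with no extra multiplicative contribution.
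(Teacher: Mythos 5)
Your proposal follows essentially the same route as the paper's proof: unfold $\int_{-E} m^{\BR}_\psi(\phi^\e_{\vecz})\,d\vecz$ via Lemma \ref{PNdecomp}, use that the support of $\phi^\e$ forces the $A$-, $M$- and $\check N$-coordinates to be $O(\e)$-close to the identity, compare the resulting measures up to a factor $1\pm C\e$, trade the displacement of the $N$-coordinate for the fuzzed sets $E^{\pm}_{c\e}$, and recognize the outcome as $\om_{\psi}(\Phi^{\e}_{E^{\pm}_{c\e}})$. The only genuine difference is implementational: you compute the $NAM\check N$-coordinates of $\check n\, n_{\vecz}$ explicitly in each $\SL_2$-factor and change variables, whereas the paper argues with set inclusions coming from bounds on $\Ad_{n_{\vecz}}$ together with the Haar-measure identity $dg=d\vecz\,d\check n\,dm\,da$ for $g=am\check n n_{\vecz}$. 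Your resolution of the convention issue is also the correct one: the leafwise measure on $N$ produced by Lemma \ref{PNdecomp} is $\nu_\psi$ weighted by $e^{\psi(\beta_{z}(o,n_{z}o))}$, i.e.\ $\om_\psi$, which is exactly how the paper uses it (it writes $d\om_\psi(n)$ in its own computation).

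One step, as you phrase it, would not go through: the $O(\e)$-perturbations of the $AM\check N$-arguments ($\widetilde{am}$ versus $am$, $\check n'$ versus $\check n$) cannot be ``absorbed by replacing $-E$ with $-E^{\pm}_{c\e}$ with no multiplicative error'' --- enlarging or shrinking $E$ only compensates the displacement in the $N$-direction (your $z\mapsto\tilde z$). Those perturbations must instead be handled either by a change of variables in $(a,m,\check n)$ for fixed $\tilde z$, whose Jacobian is $1+O(\e)$ and hence does contribute a multiplicative factor (harmless, since the sandwich allows $1\pm C\e$), or, as the paper does, by comparing integrals of the nonnegative function $\phi^{\e}(ng)$ over the sets $AM\check N N_{-E}$ and $N_{-E^{\pm}_{c\e}}AM\check N$ against the full Haar measure, where the set inclusions alone give the inequalities. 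Relatedly, your substitution $z\mapsto\tilde z$ depends on $(a,m,\check n)$, so it has to be performed with the $\vecz$-integral innermost (Fubini) before any change of variables in $(a,m,\check n)$. With these corrections the error bookkeeping you flag as the main obstacle is routine and the argument closes in the same way as the paper's.
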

\begin{proof}
By Lemma \ref{PNdecomp},
we have
\begin{align*}
\int_{-E}& {\tilde m}_\psi^{\BR}(\phi_{\vecz}^{\e})\,d\vecz=\int_{-E}\int_{M}\int_N\int_{AM{\check N}}\phi^{\e}\big(n_{z'}am{\check n}\widetilde{m}n_{\vecz}\big)\\&\qquad\qquad \qquad\qquad\qquad\qquad \qquad\qquad \times e^{-\psi(\log a)}
{e^{\psi(\beta_{z'}(o,n_{z'} o))}}
\,d{\check n}\,d\widetilde{m}\,da\,d{\nu}_{\psi} ({z'})\,dm\,d\vecz
\\&= \int_{{\bb C^d}} \left(\int_{AM{\check N}N_{-E}}\phi^{\e}\big(n_{z'}am{\check n}n_{\vecz}\big) e^{-\psi(\log a)} d\vecz\,d{\check n}\,dm\,da\right) \,d\om_{\psi} ({z'}),
\end{align*}
where all the densities appearing in the expression are those of the corresponding Haar measures, except for {$d\nu_\psi$ and} $d\om_{\psi}$.
Note that if $\phi^{\e}(nam{\check n}n_{\vecz})\neq 0$, then $nam{\check n}n_{\vecz}\in N_{\e}A_{\e}M_{\e}{\check N}_{\e}$, hence
\begin{align}\label{eq.cp}
am{\check n} \in & AM{\check N}\, \cap\, \left( n^{-1}N_{\e}n_{\vecz}\exp\big(\Ad_{n_{-\vecz}}(\log(A_{\e}M_{\e}{\check N}_{\e}))\big)\right)\notag\\&\subset AM{\check N}\, \cap \, \big( n^{-1} n_{\vecz} N_{c'\e}A_{c'\e}M_{c'\e}{\check N}_{c'\e}\big)
\notag\\&= A_{c'\e}M_{c'\e}{\check N}_{c'\e} 
\end{align}
for some $c'\geq 1$ depending only on $E$. Decomposing the Haar measure $dg$ on $G$ according to $AM{\check N}N$ and then restricting to $A_{c'\e}M_{c'\e} {\check N}_{c'\e}N_{-E}$ gives
$$ e^{-\psi(\log a)} d\vecz\,d{\check n}\,dm\,da=\big(1+O(\e)\big)\,dg$$
since $a\in A_{c'\e}$ and $d g=d\vecz\,d{\check n}\,dm\,da$
for $g=am{\check n}n_{\vecz}$ \cite[Ch. 8]{Knapp}.
Hence
$$\int_{-E} m_\psi^{\BR}(\phi_{\vecz}^{\e})\,d\vecz=\big(1+O(\e)\big)\int_{{\bb C^d}} \int_{AM{\check N}N_{-E}}\phi^{\e}({n_{z'}}g)\,dg\,d\om_{\psi}({z'}), $$
with the implied constant depending only on $E$. Now using the maximum of $\|\Ad_{n_{\vecz}}\|$ over $\vecz\in \pm E$ together with the $NAM{\check N}$ decomposition of $\exp\big(\Ad_{n_{-\vecz}}(\log(A_{c'\e}M_{c'\e}{\check N}_{c'\e}))\big)$ as above gives  {the existence of} $c\geq c'$ such that  
$$ N_{-E^-_{c\e}}A_{\e}M_{\e} {\check N}_{\e}\subset  A_{c'\e}M_{c'\e} {\check N}_{c'\e}N_{-E}\subset  N_{-E^+_{c\e}}A_{c\e}M_{c\e} {\check N}_{c\e}.$$
Combined with \eqref{eq.cp}, {for every $z'\in \bb C^d$} we have
$$\int_{N_{-E^-_{c\e}}AM{\check N}}\phi^{\e}(
{n_{z'}}
g)\,dg\leq\int_{AM{\check N}N_{-E}}\phi^{\e}(
{n_{z'}}
g)\,dg
\leq \int_{N_{-E^+_{c\e}}AM{\check N}}\phi^{\e}(
{n_{z'}}
g)\,dg,$$
giving the desired inequality.
\end{proof}
Combining Lemma \ref{PhiE} and Proposition \ref{BRPS} gives {the following result.}
\begin{cor}\label{phibdd}
There exist $C,c>0$ such that for all $\e>0$ sufficiently small,
 $$( 1-C\e)\,\om_{\psi} \big(E^-_{(1+c)\e}\big)\leq \int_{-E} m_\psi^{\BR}(\phi_{\vecz}^{\e})\,d\vecz\leq ( 1+C\e)\,\om_{\psi} \big(E^+_{(1+c)\e}\big).$$
\end{cor}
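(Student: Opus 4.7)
\medskip

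\noindent\textbf{Proof plan.} The statement is a direct consequence of Proposition \ref{BRPS} combined with Lemma \ref{PhiE}, so the plan is simply to chain the two. First, I would fix the constants $C, c > 0$ and the threshold on $\e$ produced by Proposition \ref{BRPS}, so that
\[
(1-C\e)\,\omega_\psi\bigl(\Phi^\e_{E^-_{c\e}}\bigr)\;\leq\;\int_{-E} m_\psi^{\BR}(\phi^\e_{\vecz})\,d\vecz\;\leq\;(1+C\e)\,\omega_\psi\bigl(\Phi^\e_{E^+_{c\e}}\bigr).
\]

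Next, I would apply Lemma \ref{PhiE} with $E$ replaced by the thickenings $E^\pm_{c\e}$. The lemma yields
\[
\mathbbm{1}_{(E^-_{c\e})^-_\e}(n)\;\leq\;\Phi^\e_{E^-_{c\e}}(n)\quad\text{and}\quad\Phi^\e_{E^+_{c\e}}(n)\;\leq\;\mathbbm{1}_{(E^+_{c\e})^+_\e}(n)
\]
for every $n\in N$. The routine check that nested thickenings add, namely
\[
(E^+_{c\e})^+_\e\subset E^+_{(c+1)\e}\qquad\text{and}\qquad E^-_{(c+1)\e}\subset (E^-_{c\e})^-_\e,
\]
follows immediately from the triangle inequality applied to the defining unions/intersections in \eqref{eq.epm}.

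Integrating these pointwise inequalities against $\omega_\psi$ and substituting back into the bound from Proposition \ref{BRPS} gives
\[
(1-C\e)\,\omega_\psi\bigl(E^-_{(1+c)\e}\bigr)\;\leq\;\int_{-E} m_\psi^{\BR}(\phi^\e_{\vecz})\,d\vecz\;\leq\;(1+C\e)\,\omega_\psi\bigl(E^+_{(1+c)\e}\bigr),
\]
which is exactly the desired conclusion after renaming $c$ if needed. Since the argument is a straightforward assembly of two already-proved statements, there is no real obstacle to overcome; the only point requiring attention is the bookkeeping of the thickening constants, which is handled by the triangle inequality above. \qed
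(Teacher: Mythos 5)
Your proposal is correct and is exactly the paper's argument: the paper proves the corollary by the same two-step combination of Proposition \ref{BRPS} with Lemma \ref{PhiE} (applied to the thickened sets $E^{\pm}_{c\e}$), and your bookkeeping $(E^+_{c\e})^+_{\e}\subset E^+_{(1+c)\e}$, $E^-_{(1+c)\e}\subset (E^-_{c\e})^-_{\e}$ is precisely the implicit step being summarized there. No gaps.
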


\section{Equidistribution in average.}\label{sec.main}
We fix a positive $\Ga_\rho$-critical $\psi\in \frak a^*$, $\nu_\psi$ and $u=u_\psi$, continuing the notations from sections \ref{sec:count} and \ref{uSec}.
We also fix a closed cone
$\cal D\subset \op{int}\frak a^+$ such that
$\inte\cal D\supset \cal L_\rho-\{0\}$ and set 
  $D:=\exp \cal D$ as in \eqref{fixD} and \eqref{ddd}.
Recall the notation $B_\psi(E, R)=H\ba HKD_{\psi, R}
N_{-E}
$ for a bounded subset $E\subset \CC^d$, {and $\kappa$, $\ell>0$ given by Theorem \ref{prop.mixH}}.

The main goal of this section is to prove the following main technical ingredient of the proof of Theorem \ref{thm.main.intro}, using Proposition \ref{prop.mixH2}.
\begin{thm}\label{thm3} 
For any $f\in C_c(\GaG)$ and a bounded {measurable} subset $E\subset \CC^d$ 
such that $\om_{\psi}(\partial E)=0$,
\begin{align*}
 \lim_{R\rightarrow\infty} e^{- R} &\int_{
 B_{\psi}
 (E,R)} \int_{\Ga_\rho\cap H \ba H} f( {\Ga_\rho h} g)\,d[h]\,d[g]=c_{\Ga_\rho,\psi} \int_{-E}m^{\BR}_\psi (f_{\vecz})\,d\vecz, 
\end{align*}
where 
$ c_{\Ga_\rho,\psi}:=\tfrac{\kappa\, \op{sk}_{\Ga_\rho,\psi}(H)}{\pu}\left(\int_{\op{ker}\psi} e^{-{\ell} {I(\vecw)}}\,d\vecw\right)$
and $f_{\vecz}\in C_c(\Ga_\rho\ba G)$ is defined by
$f_{\vecz}(x):= \int_M f(x mn_{\vecz})\,dm.$
\end{thm}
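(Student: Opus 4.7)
\textbf{Proof proposal for Theorem \ref{thm3}.}
The plan is to unfold the $(\Gr\cap H)$-quotient into an integral over $H$, parametrize representatives of $B_\psi(E,R)\subset H\ba G$ using the Iwasawa decomposition $G=KAN$, and then apply the uniform equidistribution of Proposition \ref{prop.mixH2} after the reparametrization $t = s u_\psi + \sqrt{s}\,w$ of $\fa^+$ with $w\in\ker\psi$. The key cancellation is that the Cartan weight $e^{2\sigma(\log a)}$ from the Haar measure, the prefactor $s^{-(d-1)/2}e^{-(2\sigma-\psi)(t)}$ from Proposition~\ref{prop.mixH2}, and the Jacobian $\asymp s^{(d-1)/2}$ of $(s,w)\mapsto t$ all combine to leave the clean exponential $e^{\psi(t)} = e^{s\pu}$, whose $s$-integral over $[0,R/\pu]$ produces $(e^R-1)/\pu \sim e^R/\pu$.

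First I would unfold. By the admissibility of $\cal P = \Gr T_0$ and Proposition~\ref{prop.UP}, there is a fixed compact subset $S\subset \Gr\cap H\ba H$ such that for every $g = ka_tn_{-z}\in KD_{\psi,R}N_{-E}$ and every $R>0$, the condition $[h]g\in\op{supp}(f)$ forces $[h]\in S$: indeed the condition places $[h]a_t$ in the fixed compact set $\op{supp}(f)\cdot(KN_{-E})^{-1}$, and Proposition~\ref{prop.UP} bounds such $[h]$ uniformly. Choose a cutoff $\phi_0\in C_c(H)$ with $\sum_{\gamma\in\Gr\cap H}\phi_0(\gamma h)=1$ on a neighborhood of a lift of $S$, so that $\int_{\Gr\cap H\ba H}f([h]g)\,d[h]=\int_H f([h]g)\phi_0(h)\,dh$ for all contributing $g$, and normalize so that $\muPS_{H,\psi}(\phi_0)=\sk_{\Gr,\psi}(H)$. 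Using $G=KAN$ with $dg=e^{2\sigma(\log a)}\,dk\,da\,dn$, parametrize $B_\psi(E,R)$ by $(k,a_t,n_{-z})$ with $k\in K$, $t\in\cal D$, $\psi(t)<R$, and $z\in E$.

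To put the inner integral into the form of Proposition~\ref{prop.mixH2}, write
\[
f([h]ka_tn_{-z}) = F_{k,z,t}([h]a_t),\qquad F_{k,z,t}(x) := f\bigl(x\cdot(a_t^{-1}ka_t)\cdot n_{-z}\bigr).
\]
As $t\to\infty$ in $\cal D$, the conjugate $a_t^{-1}ka_t$ contracts $k$ toward its $M$-component by the wave-front phenomenon (Lemma~\ref{WAVE}); after averaging over $K$ only the $M$-direction survives in the limit, and the resulting $M$-average identifies $\int_K \mBR(F_{k,z,t})\,dk$ with $\mBR(f_z)$ where $f_z(x)=\int_M f(xmn_{-z})\,dm$, up to errors absorbed in a dominated convergence. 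After reparametrizing $t=su_\psi+\sqrt s\,w$ with $w\in\ker\psi$ (Jacobian $\asymp s^{(d-1)/2}$ and $\psi(t)=s\pu$), Proposition~\ref{prop.mixH2} gives
\[
s^{(d-1)/2}e^{(2\sigma-\psi)(t)}\int_H f([h]ka_tn_{-z})\phi_0(h)\,dh \;\xrightarrow[s\to\infty]{}\; \kappa\,e^{-I(w)}\,\mBR(F_{k,z,t})\,\sk_{\Gr,\psi}(H),
\]
pointwise in $(w,z,k)$, with the uniform envelope $C''e^{-\ell I(w)}$ from \eqref{qqqq} providing the needed domination. Integrating: $\int_0^{R/\pu}e^{s\pu}\,ds\sim e^R/\pu$; the $w$-integral $\int_{\ker\psi}e^{-I(w)}\,dw$ is finite because $I(w)\asymp\|w\|^2$; the remaining $(k,z)$-integral, via $K$-invariance and Corollary~\ref{phibdd}, produces $\int_{-E}\mBR(f_z)\,dz$; and the $\phi_0$-factor contributes $\sk_{\Gr,\psi}(H)$. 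Collecting constants yields the claimed limit with $c_{\Gr,\psi}=\kappa\,\sk_{\Gr,\psi}(H)/\pu\cdot\int_{\ker\psi}e^{-I(w)}\,dw$.

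The main obstacle is controlling two sources of non-compactness uniformly in $R$. Because $\psi$ has linear (hence non-strictly-convex) sublevel sets, the slab $\{t\in\cal D:\psi(t)<R\}$ is unbounded in the $\ker\psi$ direction, so the $w$-integration must be controlled by the explicit bound $e^{-\ell I(w)}$ in Proposition~\ref{prop.mixH2} rather than by compactness. In addition, the part of the $(s,w)$-parametrization where $|w|^2/s$ is not small causes $t=su_\psi+\sqrt s\,w$ to leave the cone $\cal D$; this contribution must be shown to be $o(e^R)$, which is achieved by shrinking $\cal D$ slightly and invoking Proposition~\ref{lem.supp}(2) to bound the sum over cones disjoint from $\cal L_\rho$. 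It is precisely for these two reasons that the \emph{uniform} aspects of Proposition~\ref{prop.mixH2} stressed in Section~\ref{uSec} are indispensable.
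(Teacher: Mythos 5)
Your overall architecture (unfold against a compactly supported cutoff on $H$, reparametrize $t=su_\psi+\sqrt s\,w$, apply Proposition \ref{prop.mixH2} with its uniform envelope $e^{-\ell I(w)}$ for dominated convergence in $w$, integrate $e^{s\pu}$ to get $e^{R}/\pu$, control the complement of the cone by Proposition \ref{lem.supp}(2), and finish with Corollary \ref{phibdd} and $\om_\psi(\partial E)=0$) is the paper's, but your treatment of the $K$-factor has a genuine gap, in two places. First, the unfolding step: from $[h]ka_tn_{-z}\in\op{supp}(f)$ you claim $[h]a_t$ lies in the fixed compact set $\op{supp}(f)\cdot(KN_{-E})^{-1}$, but $k$ sits between $[h]$ and $a_t$ and cannot be moved across $a_t$; what you actually get is $[h]ka_t$ in a compact set, and Proposition \ref{prop.UP} applies to $[h]A^+$, not to $[h]KA^+$. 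The $KA^+$-version is false in general (for instance $KA^+K=G$, so $[h]KA^+$ meets $\mathcal{O}K$ for every $[h]$ and every bounded $\mathcal O$), so the existence of an $R$-independent compact $S$ at this stage is unjustified. Second, your mechanism for removing the $K$-integral is not correct: writing $f([h]ka_tn_{-z})=F_{k,z,t}([h]a_t)$ with $F_{k,z,t}(x)=f(x\,a_t^{-1}ka_t\,n_{-z})$, the conjugates $a_t^{-1}ka_t$ do not contract toward $M$ as $t\to\infty$ in the open chamber; the $\check N$-component of a generic $k$ is expanded, so $\{F_{k,z,t}\}$ is not a fixed compact family of test functions, the constants $C''(\cdot,\phi)$ in \eqref{qqqq} are not uniform over it, and Proposition \ref{prop.mixH2} cannot be applied to it as you propose. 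Moreover, in the correct answer no Haar $dk$-average survives at all, which is a sign that "averaging over $K$ identifies the limit with $\mBR(f_z)$" is not the right bookkeeping.

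The paper's route fixes exactly this: by Lemma \ref{GEO}(2) and Lemma \ref{HKA}, for $a_t$ deep in the chamber the relevant right $K$-component automatically lies in $K_\e M$, and the strong wavefront Lemma \ref{WAVE}, $aK_\e M\subset H(aA_{\ell\e})N_{\ell\e}$, absorbs it into $H$ on the left at the cost of $\e$-perturbations of the $A$- and $N$-parts; Lemma \ref{lem.XE} and Corollary \ref{cor.C} then sandwich $B_\psi(E,R)$ between sets of the form $H\ba H D_{\psi,R\pm\ell\e}A_{\ell\e}N_{-E^\pm_{\ell\e}}$, up to a fixed compact piece (the near-wall region $D\cap\cal X_\e$) and the piece $H\ba H(A^+-D)KN_{-E}$, whose total contribution is finite by Proposition \ref{lem.supp}(2) and hence negligible after multiplying by $e^{-R}$ (for the lower bound one also uses a slightly smaller cone $\cal D_0$). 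Only after this reduction, when the group element is $a_tn_{-z}$ with $z$ in a bounded set and no $K$ present, does Proposition \ref{prop.UP} give the $R$-independent compactness of the contributing $[h]$, allowing the cutoff $\phi\in C_c(H)$ with $\mu^{\PS}_{H,\psi}(\phi)=\op{sk}_{\Ga_\rho,\psi}(H)$; using $M\subset H$ produces $f_z$, and from there your Lemma \ref{lem.ab}-style computation, the $\pm\ell\e$ sandwich, and the limit $\e\to0^+$ via Corollary \ref{phibdd} and $\om_\psi(\partial E)=0$ complete the proof as in the paper. As written, your argument needs this wavefront reduction inserted before the unfolding and in place of the conjugation/averaging step.
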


In the above, $d[g]$ denotes
the $G$-invariant measure
on $H\ba G$ which is compatible to Haar measures
$dg$ and $dh$ on $G$ and $H$ respectively, that is, for any $f\in C_c(G)$,
$$ \int_G f(g)\,dg=\int_{H\ba G} \left( \int_H f(hg)\,dh\right)\,d[g].$$

\subsection*{Integral computation}
For each $\vecw\in\op{ker}\psi$, let $
Q_R(w)\subset (0, \infty)$ be defined as
\be\label{qrw}
Q_R(w):= \lbrace s\in \RR_{>0}\,:\, su+\sqrt s\vecw\in
A_{\psi, R}^+
\rbrace
{.}
\ee 
Since $\psi(w)=0$, we compute that for all $R>0$, $Q_R(w)$ is an interval of the form
$$Q_R(w)=(0, \sfrac{1}{\Phi_\rho(u)}R).$$
The uniform bound in Proposition \ref{prop.mixH2} enables us to
use the dominated convergence theorem to prove {the following result.}
\begin{lem}\label{lem.ab}
For $f\in C_c(\Ga_\rho\ba G)$, $\phi\in C_c(H)$ and a bounded {measurable subset} $E\subset\bb C^d$, define for each $\vecw\in\op{ker}\psi$,
$$
p_R(w)=e^{-R}\int_{E}\int_{Q_R(w)}s^{\frac{d-1}{2}}e^{2\sigma(su+\sqrt s w)}\int_H f_z({\Ga_\rho h}\exp(su+\sqrt sw))\phi(h)\,dh\,ds\,dz.
$$
Then 
\begin{enumerate}
    \item $\lim_{R\to \infty} p_R(w)= \sfrac{\kappa\, \mu_{H,\psi}^{\PS}(\phi)}{\pu }e^{-{\ell}I(\vecw)}\int_{E}m_\psi^{\BR}(f_z)\,dz$ and
    \item $p_R(w)\leq C e^{-\ell I(w)}$ for some $C=C(f,E,\phi)>0$.
\end{enumerate}
\end{lem}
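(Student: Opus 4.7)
The plan is to reduce the claim to an application of Proposition \ref{prop.mixH2} via a factorization and a change of variable. Since $w\in\op{ker}\psi$ and $\psi(u)=\pu$, we have $\psi(su+\sqrt s w)=s\pu$, so the integrand of $p_R(w)$ factors as
$$s^{(d-1)/2}e^{2\sigma(su+\sqrt sw)}\int_H f_z([h]\exp(su+\sqrt sw))\phi(h)\,dh = e^{s\pu}\, G(s,w,z),$$
where $G(s,w,z):= s^{(d-1)/2}e^{(2\sigma-\psi)(su+\sqrt sw)}\int_H f_z([h]\exp(su+\sqrt sw))\phi(h)\,dh$ is precisely the quantity controlled by Proposition \ref{prop.mixH2}. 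Moreover $Q_R(w)\subset(0,R/\pu)$, since $\psi(su+\sqrt sw)<R$ forces $s<R/\pu$. Substituting $t=R-s\pu$ converts the $s$-integral into one against the exponential kernel $e^{-t}\,dt$:
$$p_R(w) = \frac{1}{\pu}\int_E\int_0^R\mathbbm{1}_{Q_R(w)}\!\bigl(\tfrac{R-t}{\pu}\bigr)\, e^{-t}\, G\!\bigl(\tfrac{R-t}{\pu},w,z\bigr)\,dt\,dz.$$

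For part (1), for each fixed $t>0$ and all sufficiently large $R$ one has $(R-t)/\pu\in Q_R(w)$ and $(R-t)/\pu\to\infty$, so Proposition \ref{prop.mixH2} yields the pointwise limit $G((R-t)/\pu,w,z)\to\kappa\,e^{-I(w)}\,m_\psi^{\BR}(f_z)\,\mu_{H,\psi}^{\PS}(\phi)$. The uniform bound $|G(s,w,z)|\leq C''(f,\phi)\,e^{-\ell I(w)}$ from Proposition \ref{prop.mixH2} (valid for $s>s_0$) supplies the dominating function $|E|\,C''(f,\phi)\,e^{-\ell I(w)}\,e^{-t}$, integrable over $t\in(0,\infty)$, on the bulk range $t\in(0,R-s_0\pu)$. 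On the tail $t\in(R-s_0\pu,R)$ (of length $s_0\pu$), the factor $e^{-t}$ is $O(e^{-R})$, so this portion is $o(1)$ as $R\to\infty$ for each fixed $w$ by any crude $L^\infty$ estimate of $G$ on the bounded set $s\in(0,s_0)$. Dominated convergence applied to the $(t,z)$-integral then gives
$$\lim_{R\to\infty} p_R(w)=\frac{\kappa\,\mu_{H,\psi}^{\PS}(\phi)}{\pu}\,e^{-I(w)}\int_E m_\psi^{\BR}(f_z)\,dz,$$
which is part (1).

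For part (2), the contribution to $p_R(w)$ from the range $s>s_0$ is bounded, via the uniform estimate in Proposition \ref{prop.mixH2}, by $|E|\,C''(f,\phi)\,e^{-\ell I(w)}/\pu$ after $\int_0^\infty e^{-t}\,dt=1$, which already has the desired form. The remaining contribution from $s\in(0,s_0)\cap Q_R(w)$ requires additional care: the admissibility condition $su+\sqrt sw\in\fa^+$ forces each negative coordinate $w_i$ to satisfy $|w_i|\leq\sqrt{s_0}\,u_i$ (otherwise this range is empty), confining such coordinates to a bounded set; for the unrestricted positive coordinates, the compactness of $\supp f$ and $\supp\phi$ combined with the discreteness of $\Gamma_\rho$ forces the integrand $\int_H f_z([h]\exp(su+\sqrt sw))\phi(h)\,dh$ to vanish once $\|w\|$ is sufficiently large. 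A crude exponential estimate on the residual compact $(s,w)$-region, together with a possible shrinkage of $\ell$, then yields a bound of the form $C_2(f,E,\phi)\,e^{-\ell I(w)}$, completing part (2).

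The principal obstacle is the small-$s$ component of part (2): Proposition \ref{prop.mixH2} delivers the desired Gaussian-type decay in $w$ only for $s>s_0$, so one must extract an analogous decay in the complementary range from admissibility combined with compact supports, exploiting the fact that $[h]\exp(su+\sqrt sw)$ escapes every fixed compact subset of $\Gamma_\rho\backslash G$ as $\|w\|\to\infty$ for bounded $s$.
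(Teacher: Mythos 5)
Your argument is, in substance, the paper's own proof: you factor out $e^{s\Phi_\rho(u_\psi)}$, substitute so that the $s$-integral becomes an integral against an exponential kernel (the paper writes it as $\int_{-R/\Phi_\rho(u_\psi)}^{0}e^{\Phi_\rho(u_\psi)s}J(s+R/\Phi_\rho(u_\psi),w,z)\,ds$, which is your $t=R-s\,\Phi_\rho(u_\psi)$ change of variable), and then use Proposition \ref{prop.mixH2} both for the pointwise limit and for the dominating function $C''e^{-\ell I(w)}$, concluding (1) by dominated convergence and (2) from the same uniform bound. You are in fact more scrupulous than the paper, which applies the uniform bound over the whole range without separating out $s\le s_0$.

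The one place where your justification is off is the small-$s$ part of (2). The positive coordinates of $\vecw$ are not ``unrestricted'', and the vanishing claim you base on compact supports and discreteness is not established by that reasoning: if $s\approx\|\vecw\|^{-2}$, then $\sqrt s\,\vecw$ stays bounded, $\exp(su_\psi+\sqrt s\vecw)$ stays in a compact set, and nothing forces the $H$-integral to vanish. What actually saves you is the constraint $\vecw\in\op{ker}\psi$ together with positivity of $\psi$: admissibility $su_\psi+\sqrt s\vecw\in\fa^+$ with $s\le s_0$ bounds each negative coordinate by $\sqrt{s_0}\max_i (u_\psi)_i$, and then $\psi(\vecw)=0$ (with $\psi(e_i)>0$ for every coordinate direction) bounds the positive coordinates as well, so the region $\{(s,\vecw):0<s\le s_0,\ su_\psi+\sqrt s\vecw\in\fa^+\}$ has $\|\vecw\|=O(\sqrt{s_0})$ and is compact. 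On it $I(\vecw)$ is bounded, the integrand is bounded in terms of $f$, $\phi$, $E$, and the prefactor contributes at most $e^{-R+\Phi_\rho(u_\psi)s_0}\le e^{\Phi_\rho(u_\psi)s_0}$, which yields the required $Ce^{-\ell I(\vecw)}$ bound directly (and makes this range negligible for (1) as well). So the conclusion you want on that range is correct, but for a different and simpler reason than the one you gave; with that substitution your proof is complete and coincides with the paper's.
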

\begin{proof} For simplicity, set $c_u=\Phi_\rho(u)$ in this proof.
For all sufficiently large $R>0$, we may rewrite $p_R(w)$ as
\begin{align*}
    p_R(w)&=e^{-R}\int_E\int_{0}^{R/{c_u}} e^{c_u s} J(s,w,z)\,ds\,dz\\
    &=\int_E\int_{-R/c_u}^{0}  e^{ c_u s{'}}J(s{'}+{R}/{c_u},w,z) \,ds{'}\,dz
\end{align*}
where 
$$
J(s,w,z)=s^{\frac{d-1}{2}}e^{(2\sigma-\psi)(su+\sqrt s w)}\int_H f_z({\Ga_\rho h}\exp(su+\sqrt sw))\phi(h)\,dh.
$$
By Proposition \ref{prop.mixH2}, $J(s,w,z)\to \kappa e^{-{\ell}I(w)} m_\psi^{\BR}(f_z)\mu_{H}^{\PS}(\phi)$ as $s\to\infty$ and 
$$J(s,w,z)\leq C'' e^{-\ell I(w)}$$ where $C''=C''(\sup_{z\in E}f_z,\phi)$ is as in Proposition \ref{prop.mixH2}.
Hence $(1)$ follows from the dominated convergence theorem {as $R\to\infty$}.
{Assertion} 
$(2)$ follows from the bound
$$
p_R(w)\leq \op{Vol}(E)\int_{-R/{c_u}}^{0}  e^{c_u s}J(s+R/{c_u},w,z) \,ds
$$
by setting
$C=
\frac{1}{c_u}
\op{Vol}(E)C''$.
\end{proof}

\subsection*{Proof of Theorem \ref{thm3}}
Without loss of generality, we may assume that $f\geq 0$. 
For $
[g]
\in H\ba G$, set
$$ f^H(
[g]
):=\int_{\Ga_\rho\cap H \ba H} f(
{
\Ga_\rho h
}
g)\,dh.$$
By  Proposition \ref{prop.UP},
{and using the expression of $g$ with respect to the generalized Cartan decomposition $G=HA^+K$,}
we can choose $\phi\in C_c({\Ga_\rho\cap H\ba}H)$ depending only on the support of $f$ and $E$ such that
\begin{equation}\label{eq.s}
f_{\vecz}^H(
[g]
)= \int_{{\Ga_\rho\cap H\ba}H} f_{\vecz}(
{
\Ga_\rho h
}
g)\phi(h)\,dh    
\end{equation} 
for all $\vecz\in E$.
This will allow us to apply Proposition \ref{prop.mixH2} directly to $f_{\vecz}^H$. Furthermore, by 
Proposition \ref{lem.supp}(1),
the support of $\muPS_{\Ga_\rho\cap H \ba H,\psi}$ is compact, so we may additionally assume that 
$\phi=1$ on the support of $\muPS_{\Ga_\rho\cap H \ba H, \psi}$ and
hence $\op{sk}_{\Ga_\rho,\psi}(H)=\muPS_{\Ga_\rho\cap H \ba H,\psi}(\phi)$.
By Proposition \ref{lem.supp}(2),
$$
\int_{H\ba H(A^+-D)KN_{-E}}f^H([g])\,d
 [g]<\infty.
$$
Since
$   d[a_t n_{\vecz}]=e^{2\sigma(\vect)}d\vect\,d\vecz$
where $\sigma(t)=\sum_{i=1}^{d} t_i$,
we deduce from
Lemma \ref{lem.XE} and the inclusion $B_\psi(E,R)\subset\tilde B_\psi(E,R)$ 
{that}
\begin{align}\label{eq.D}
& \limsup_{R\rightarrow\infty} e^{-R} \int_{{B_\psi}(E,R)}f^H(
[g])\,d [g]
\notag \\& \leq \limsup_{R\rightarrow\infty} e^{-R} \int_{-E^+_{
 \ell\e}} \int_{
 A_{\psi,R+\ell\e}^+
 } f^H(
 [
a_tn_{\vecz}]
)e^{2\sigma(\vect)}\,d\vect\,d\vecz.
\end{align}
Since $M\subset H$, we have 
\begin{align*}
f^H(
[
a_t n_{\vecz}
]
)=&f^H(
[m a_t n_{\vecz}])=f^H(
[a_t m n_{\vecz}]
)\\&=\int_M f^H(
[a_t m n_{\vecz}])\,dm= f_{\vecz}^H(
[a_t] ).
\end{align*}

We now compute the {upper} limit in \eqref{eq.D}.

Using {\eqref{qrw} and}  \eqref{eq.s} together with the fact that 
{
$t=su+\sqrt s w$ on $\frak a^+$ hence
} 
$dt=s^{\frac{d-1}{2}}\,ds\,dw$, we first rewrite 
\eqref{eq.D}
as
$
{\limsup_{R\to\infty}}
\int_{\op{ker}\psi}p_R(w)\,dw$
where
\begin{align*}
&p_R(w):=e^{-R}\int_{-E_{\ell\e}^+}\int_{Q_{R+
\ell\e}(\vecw)} s^{\frac{d-1}{2}}e^{2\sigma(su+\sqrt s w)} f_{\vecz}^H(
[\exp(su+\sqrt s\vecw)])\,ds\,dz\\
&=e^{-R}\int_{-E_{\ell\e}^+}\int_{Q_{R+
\ell\e}(\vecw)} s^{\frac{d-1}{2}}e^{2\sigma(su+\sqrt s w)} \int_{{\Ga_\rho\cap H\ba}H} f_{\vecz}(
{\Ga_\rho h}
\exp(su+\sqrt s\vecw)
)\phi(h)\,dh \,ds\,dz.
\end{align*}

Applying Lemma \ref{lem.ab} by replacing $R$ with $R+
\ell\e$ and $E$ with $-E_{\ell\e}^+$, by the dominated convergence theorem,
\begin{align*}
&\lim_{R\to\infty}\int_{\op{ker}\psi} p_R(w)\,dw=\int_{\op{ker}\psi} \lim_{R\to\infty} p_R(w)\,dw\\
&=\frac{\kappa\, \mu_{{\Ga_\rho\cap H\ba}H,\psi}^{\PS}(\phi) e^{
\ell\e}}{\pu} {\;}\int_{\op{ker}\psi}e^{-{\ell}I(\vecw)}\,dw\,\int_{-E_{
\ell\e}^+}m_\psi^{\BR}(f_{\vecz})\,dz.
\end{align*}
Altogether, we have thus obtained
$$ \limsup_{R\rightarrow\infty} e^{- R} \int_{{B}_\psi(E,R)}f^H(
[g])\,d
[g] \leq c_{{\Ga_\rho,}\psi}\, e^{{
\ell\e} }\int_{-E^+_{
\ell\e}} m_\psi^{\BR}(f_{\vecz})\,d\vecz.$$
Similarly,
but applying Lemma \ref{lem.XE} to $\tilde B_\psi(E_{\ell\e}^-,R-\ell\e)$ and $D_0=\exp \cal D_0$ where $\cal D_0$ is a cone such that $\cal L_\rho-\{0\}\subset\op{int}\cal D_0\subset \cl{\cal D_0}\subset \cal D$,
we have
$$ \liminf_{R\rightarrow\infty}   e^{- R} \int_{{B}_\psi(E,R)}f^H(
[g])\,d[g] \geq c_{{\Ga_\rho,}\psi}\, e^{{
-\ell\e}} \int_{-E_{
\ell\e}^-} m_\psi^{\BR}(f_{\vecz})\,d\vecz.$$
{
Note that by Corollary \ref{phibdd}, we have 
$$( 1-C\e)\,\om_{\psi} \big(E^-_{(1+c)\e}\big)\leq \int_{-E} m_\psi^{\BR}(\phi_{\vecz}^{\e})\,d\vecz\leq ( 1+C\e)\,\om_{\psi} \big(E^+_{(1+c)\e}\big)$$
for all sufficiently small $\e>0$.
}
Since $\om_{\psi} (\partial E)=0$, 
taking $\e\rightarrow0^+$ completes the proof.

\section{Proof of the main counting theorem}\label{pm}

In this section, we prove the following main theorem of this paper{.}
\begin{thm}\label{thm.main} Let $\cal P$ be a $\Ga_\rho$-admissible torus packing. 
For any positive linear form $\psi\in \frak a^*$,  there 
{exist}
a constant $c_\psi=c_{\cal P,\psi}>0$ 
 such that for any bounded {measurable} subset $E\subset \c^d$ with boundary contained in a proper real algebraic subvariety, we have
\begin{equation}\label{eq.E2}
\lim_{R\to\infty} e^{-\delta_\psi R}{\;} {N_R(\cal P,\psi,E)}=
c_{\psi}{\;}\omega_{\psi} (E).    
\end{equation}
\end{thm}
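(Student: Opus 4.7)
The plan is to reduce Theorem \ref{thm.main} to the asymptotic provided by Theorem \ref{thm3}, combined with the measure comparison of Corollary \ref{phibdd}, by a standard smoothing argument. First, by Definition \ref{in-ad2}, $\cal P$ is a finite disjoint union of $\Gamma_\rho$-orbits of admissible tori, and both sides of \eqref{eq.E2} behave additively under this decomposition; after conjugating in $G$ we may therefore assume $\cal P = \Gamma_\rho T_0$. Next, Lemma \ref{lem.H} yields a unique $s_0 > 0$ such that $\psi_0 := s_0\psi$ is $\Gamma_\rho$-critical, and since $N_R(\cal P,\psi,E) = N_{s_0 R}(\cal P,\psi_0,E)$ together with $\delta_{\psi_0} = 1 = s_0 \delta_\psi$, it suffices to handle the case of a positive $\Gamma_\rho$-critical $\psi$ (so $\delta_\psi = 1$).

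\emph{Smoothing the lattice count.} Combining Proposition \ref{BETcount1} and Corollary \ref{cor.C} reduces the problem to the asymptotic behavior of $\#([e]\Gamma_\rho \cap B_\psi(E,R))$, with $E$ replaced by its $\epsilon$-thickenings $E^\pm_\epsilon$ and $R$ by $R \pm O(\epsilon)$. To access this count through Theorem \ref{thm3}, choose a non-negative bump $\phi^\epsilon \in C_c(G)$ supported near $e$ with unit Haar mass as in Section \ref{omegaSec}, and let $\Phi^\epsilon \in C_c(\Gamma_\rho\backslash G)$ denote its automorphic projection. An unfolding argument, whose legitimacy rests on Propositions \ref{prop.UP} and \ref{lem.supp} (these ensure local finiteness and boundedness of the support of $\mu^{\mathrm{PS}}_{\Gamma_\rho \cap H \backslash H,\psi}$), sandwiches the smoothed integral $\int_{B_\psi(E,R)} (\Phi^\epsilon)^H([g])\, d[g]$ between $(1 + O(\epsilon)) \cdot \#\bigl([e]\Gamma_\rho \cap B_\psi(E^\pm_{c\epsilon}, R \pm c\epsilon)\bigr)$ for some constant $c > 0$ depending only on the support of $\phi^\epsilon$.

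\emph{Passage to the limit.} Applying Theorem \ref{thm3} with $f = \Phi^\epsilon$ gives
$$\lim_{R \to \infty} e^{-R} \int_{B_\psi(E,R)} (\Phi^\epsilon)^H([g]) \, d[g] = c_{\Gamma_\rho,\psi} \int_{-E} m_\psi^{\mathrm{BR}}(\phi^\epsilon_z) \, dz,$$
and Corollary \ref{phibdd} identifies the right-hand side, up to a factor $1 + O(\epsilon)$, with $c_{\Gamma_\rho,\psi} \cdot \omega_\psi(E^\pm_{(1+c)\epsilon})$. Combining these with the sandwich above and passing to $\limsup$ and $\liminf$ in $R$ yields
$$(1 - O(\epsilon))\, c_{\Gamma_\rho,\psi}\, \omega_\psi(E^-_{c'\epsilon}) \;\leq\; \liminf_{R \to \infty} e^{-R} N_R \;\leq\; \limsup_{R \to \infty} e^{-R} N_R \;\leq\; (1 + O(\epsilon))\, c_{\Gamma_\rho,\psi}\, \omega_\psi(E^+_{c'\epsilon}).$$
Because $\partial E$ lies in a proper real algebraic subvariety, the result established in Section \ref{sec:f} gives $\omega_\psi(\partial E) = 0$, hence $\omega_\psi(E^\pm_{c'\epsilon}) \to \omega_\psi(E)$ as $\epsilon \to 0^+$, and the theorem follows with $c_\psi = c_{\Gamma_\rho,\psi}$.

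\emph{Main obstacle.} The delicate point is that the sublevel sets $\{\psi \leq R\}$ are only convex and not strictly so; consequently $B_\psi(E,R)$ must be decomposed along the transverse slices indexed by $w \in \mathrm{ker}\,\psi$, and one must verify that the multiplicative errors $1 + O(\epsilon)$ appearing in the smoothing step are uniform in $R$. This uniformity is precisely what is provided by the effective bound \eqref{qqqq} in Proposition \ref{prop.mixH2}, whose exponential decay $e^{-\ell I(w)}$ underwrites the dominated convergence step in Lemma \ref{lem.ab}; without the uniform version of the mixing Theorem \ref{m11}, the linearity of $\psi$ would obstruct the interchange of the $R \to \infty$ limit with integration over $\mathrm{ker}\,\psi$.
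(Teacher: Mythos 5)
Your argument for the single orbit $\cal P=\Ga_\rho T_0$ is essentially the paper's own proof: Proposition \ref{BETcount1} and Corollary \ref{cor.C} to pass to the sets $B_\psi(E_\e^\pm,R\pm O(\e))$, the bump-function sandwich and unfolding (the paper's $F_R$, $F_R^{\e,\pm}$ and Proposition \ref{equiint}), Theorem \ref{thm3} plus Corollary \ref{phibdd}, and finally $\om_\psi(\partial E)=0$ from Section \ref{sec:f} to let $\e\to 0$. Your closing remarks on the non-strict convexity of $\{\psi<R\}$ and the role of the uniform bound \eqref{qqqq} also reflect the paper's reasoning.

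The genuine gap is the sentence ``after conjugating in $G$ we may therefore assume $\cal P=\Ga_\rho T_0$.'' Conjugation by $g_0$ (where $T=g_0T_0$) replaces $\Ga_\rho$ by $\Ga_\rho^{g_0}=g_0^{-1}\Ga_\rho g_0$, and the special case applied to $\Ga_\rho^{g_0}$ produces the limit $c_{\Ga_\rho^{g_0},\psi}\cdot\om_{g_0,\psi}(g_0^{-1}\cdot E)$, i.e.\ an asymptotic expressed in terms of the Patterson--Sullivan data of the \emph{conjugated} group, not the measure $\om_\psi=\om_{\Ga_\rho,\psi}$ appearing in the statement (which must depend only on $\Ga_\rho$ and $\psi$, uniformly over the finitely many orbits --- otherwise your additivity step over orbits does not produce a single measure). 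Bridging this requires real content, which the paper supplies in its general-case argument: Lemma \ref{lem.G} to see that $\psi$ remains $\Ga_\rho^{g_0}$-critical, Lemma \ref{lem.n} (resting on the uniqueness of the $(\Ga_\rho,\psi)$-PS measure from \cite{LO}) to identify the twisted pushforward of $\nu_\psi^{g_0}$ with $\nu_\psi$, Lemma \ref{omegg0} giving $\om_{g_0,\psi}(g_0^{-1}\cdot E)=c_{g_0}\,\om_\psi(E)$, and Lemma \ref{BRconj1} together with Proposition \ref{BETg} to handle the right-translation of the counting sets by $g_0$ in the Burger--Roblin integrals. Without these steps your reduction only yields \eqref{eq.E2} with the wrong measure, so the theorem as stated does not follow. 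A minor additional slip: in your rescaling reduction the correct relation is $\delta_{\psi_0}=\delta_\psi/s_0$, hence $\delta_\psi=s_0$ (not $1=s_0\delta_\psi$); with your relation the normalizations $e^{-\delta_\psi R}$ and $e^{-s_0R}$ would not match, though the intended homogeneity argument is of course the right one.
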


\begin{Ex} \label{tr} \rm 
Note that $\op{Vol}(T)=(2\pi)^d e^{-\sigma(\vecv(T))}$ since $\sigma(t_1,\cdots,t_d)=t_1+\cdots+t_d$.
Hence, we have $$
N_{R}(\cal P,\sigma,E)=\#\{T\in \cal P:
\text{Vol} (T) \ge (2\pi)^d e^{-R}, \; T\cap E\ne \emptyset\}.
$$ 

Since $\sigma\in \fa^*$ is positive, Theorem \ref{THM2} is a special case of Theorem \ref{thm.main}, with 
$
\delta_{L^1}(\rho)
=\delta_\sigma$, $c_{\cal P}=(2\pi)^{d\delta}c_{\cal P,\sigma}$ and $\om_{\psi}=\om_{\Ga_\rho,\sigma}$.
\end{Ex}

The proof of the following lemma is postponed until the final section (Theorem \ref{co1}){.}
\begin{lem}\label{lem.F}
For any bounded {measurable subset} 
 $E\subset\bb C^d$ with $\partial E$ contained in a proper real algebraic subvariety, we have $\om_\psi(\partial E)=0$.
\end{lem}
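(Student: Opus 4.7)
The plan is first to replace $\om_\psi$ by $\nu_\psi$, then to enlarge the ambient space to the full Furstenberg boundary $\F = G/P$, and finally to invoke the general Theorem~\ref{co1} whose proof uses a minimal-dimension argument on the Zariski density of $\Gr$. By Definition~\ref{eq.om}, the measure $\om_\psi$ on $\c^d$ is absolutely continuous with respect to $\nu_\psi|_{\c^d}$ with continuous, strictly positive Radon--Nikodym derivative $z \mapsto e^{\psi(\beta_z(o, n_z o))}$; in particular the two measures share the same null sets in $\c^d$. Since $\partial E$ is contained in a proper real algebraic subvariety $V\subset \c^d$, it therefore suffices to show that $\nu_\psi(V) = 0$ for every such $V$.

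Next, view $\c^d$ as a Zariski open subset of $\F = (\hat \c)^d$, which is itself a real algebraic variety. The Zariski closure $\overline{V}$ of $V$ inside $\F$ is a proper real algebraic subvariety of $\F$ that still contains $V$. Hence the claim of the lemma reduces to the following assertion: \emph{for any Zariski dense Anosov subgroup $\Gr$ of $G$ and any $\Gr$-PS measure $\nu$, every proper real algebraic subvariety $W \subset \F$ satisfies $\nu(W) = 0$.} This is the content of Theorem~\ref{co1}, which is stated and proved in Section~\ref{sec:f} in the broader setting of Zariski dense Anosov subgroups of semisimple real algebraic groups.

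The plan for the proof of that general statement is a minimal-dimension argument. Suppose for contradiction that some proper real algebraic subvariety has positive $\nu$-mass, and pick $W_0$ among such subvarieties having minimal dimension. For every $\g \in \Gr$ with $\g W_0 \ne W_0$, the intersection $W_0 \cap \g W_0$ is a real algebraic subvariety of $W_0$ of strictly smaller dimension, and is therefore $\nu$-null by minimality. Consequently, the $\Gr$-translates $\{\g W_0\}$ are pairwise null-disjoint, indexed by the coset space $\Gr_0 \backslash \Gr$, where $\Gr_0 := \Gr \cap H_0$ and $H_0 := \op{Stab}_G(W_0)$. Observe that $H_0$ is a real algebraic subgroup of $G$, and $H_0 \ne G$ since $G$ acts transitively on $\F$ and $W_0 \subsetneq \F$.

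The main obstacle is to rule out the possibility that $[\Gr : \Gr_0] = \infty$. Once this is excluded, $\Gr_0$ is of finite index in $\Gr$, so its Zariski closure has finite index in the Zariski closure of $\Gr$, which is $G$; connectedness of $G = \prod_i \PSL_2(\c)$ then forces $\overline{\Gr_0}^{\mathrm{Zar}} = G$, whence $H_0 = G$ and $W_0 = \F$, contradicting properness. To exclude the infinite-index case, one exploits the Zariski density of $\Gr$ and the Anosov property: in this case the orbit $\Gr \cdot W_0$ in the Chow variety of subvarieties of $\F$ of bounded complexity is infinite and Zariski dense in the positive-dimensional algebraic orbit $G \cdot W_0 \cong G/H_0$. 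Combining the resulting abundance of essentially disjoint translates with the local product structure of $\nu$ at radial limit points (which are dense by the uniform conicality in Lemma~\ref{lem.con}) and the mixing of the Weyl chamber flow (Theorem~\ref{prop.mixH}) produces more total $\nu$-mass than $\nu(\F) = 1$ permits; this is the step where the PS cocycle, the Anosov regularity of the boundary map, and the Zariski density of $\Gr$ all intervene together.
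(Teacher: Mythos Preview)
Your reduction to Theorem~\ref{co1} is correct and matches the paper exactly: absolute continuity of $\om_\psi$ with respect to $\nu_\psi$, then passage to the Zariski closure in $\F$. The divergence is in the proof you sketch for Theorem~\ref{co1} itself.

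The paper's argument is short and uses a different key tool: the $\Gamma_\rho$-ergodicity of the \emph{product} measure $\nu_\psi\times\nu_\psi$ on $\F\times\F$ (Proposition~\ref{prop.Z}; note $\psi\circ\i=\psi$ automatically since $\i$ is trivial for $G=\prod_i\PSL_2(\c)$). Take $S$ irreducible of minimal dimension with $\nu_\psi(S)>0$. Product ergodicity forces $\Gamma_\rho(S\times S)$ to have full $\nu_\psi\times\nu_\psi$-measure. Then for an \emph{arbitrary} $\gamma_0\in\Gamma_\rho$, the set $S\times\gamma_0 S$ has positive product measure, hence meets some $\gamma S\times\gamma S$ in positive measure; minimality gives $S=\gamma S=\gamma_0 S$. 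Thus $\Gamma_\rho S=S$, contradicting Zariski density. No index dichotomy is needed.

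Your route via the stabilizer $H_0$ and the index $[\Gamma_\rho:\Gamma_0]$ handles the finite-index case correctly, but the infinite-index step is a genuine gap. Infinitely many pairwise essentially disjoint sets of positive measure in a probability space is \emph{not} a contradiction: the measures $\nu_\psi(\gamma W_0)$ may well be summable, since $\nu_\psi$ is only quasi-invariant and the cocycle $e^{-\psi(\beta_\xi(\gamma^{-1},e))}$ can decay. Your appeal to mixing of the Weyl chamber flow (a statement about $\Gamma_\rho\backslash G$, not about $\F$) and to ``local product structure at radial limit points'' does not supply the missing uniform lower bound on $\nu_\psi(\gamma W_0)$, nor any other obstruction. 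The paper's double-ergodicity argument sidesteps this difficulty entirely by proving $\Gamma_\rho W_0=W_0$ directly.
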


Since every homothety class of a positive linear form can be represented by a positive $\Ga_\rho$-critical linear form (Lemma \ref{lem.H}) and $\delta_\psi=1$ for critical linear forms,
Theorem \ref{thm.main} follows from
Lemma \ref{lem.F} and 
the following{.}

\begin{prop}\label{prop.C}
For any positive $\Ga_\rho$-critical linear form $\psi\in\frak a^*$
and any bounded {measurable subset} 
 $E\subset\bb C^d$ with $\om_\psi(\partial E)=\emptyset$,
we have
$$
\lim_{R\to\infty} e^{- R}{\;} {N_R(\cal P,\psi,E)}=
c_{\psi}{\;} \omega_{\psi} (E)    
$$ for some constant $c_\psi>0$.
\end{prop}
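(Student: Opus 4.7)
The plan is to convert the counting problem for $N_R(\cal P,\psi,E)$ into an integral on $H\ba G$ via a smoothing argument, apply the averaged equidistribution Theorem \ref{thm3} to extract the main term, and then match the resulting integral against $\om_\psi(E)$ via Corollary \ref{phibdd}.

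First I would reduce to the case of a single orbit: the packing $\cal P$ decomposes into finitely many $\Ga_\rho$-orbits $\cal P=\bigsqcup_{i=1}^N\Ga_\rho T_i$, so by writing each $T_i=g_iT_0$ for some $g_i\in G$ and summing the resulting asymptotics (the role of $H$ and $T_0$ being played by $g_iHg_i^{-1}$ and $T_i$ in the parallel analysis for each orbit), it suffices to handle $\cal P=\Ga_\rho T_0$ directly. Combining Proposition \ref{BETcount1} with Corollary \ref{cor.C} gives a sandwich
\[
\#\bigl([e]\Ga_\rho\cap B_\psi(E_\e^-,R-\ell\e)\bigr)-O(1)\le N_R(\cal P,\psi,E)\le\#\bigl([e]\Ga_\rho\cap B_\psi(E_\e^+,R+\ell\e)\bigr)+O(1),
\]
with error $O(1)$ uniform in $R$, so it remains to analyse $\#\bigl([e]\Ga_\rho\cap B_\psi(E',R)\bigr)$ for a bounded Borel subset $E'\subset\CC^d$.

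Next I would smooth the count. Taking the non-negative bump $\phi^\e\in C_c(G)$ from Section \ref{omegaSec}, I form the $\Ga_\rho$-periodisation $F^\e([g])=\sum_{\gamma\in\Ga_\rho}\phi^\e(\gamma^{-1}g)\in C_c(\Ga_\rho\ba G)$; compactness of supports encountered in the bounded subset $B_\psi(E',R)$ is ensured by Proposition \ref{prop.UP}. A standard unfolding along $\gamma=h_0\delta$ with $h_0\in\Ga_\rho\cap H$, $\delta\in(\Ga_\rho\cap H)\ba\Ga_\rho$ yields
\[
(F^\e)^H([g])=\sum_{\delta}\int_H\phi^\e(\delta^{-1}hg)\,dh,
\]
so that integration of $(F^\e)^H$ against $\mathbbm{1}_{B_\psi(E',R)}$ sandwiches the orbital count between the orbital counts on $B_\psi((E')^\pm_{c\e},R\pm c\e)$ up to lower-order boundary corrections. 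Applying Theorem \ref{thm3} with $f=F^\e$, followed by Corollary \ref{phibdd} to rewrite the resulting integral in terms of $\om_\psi$, gives
\[
\lim_{R\to\infty}e^{-R}\int_{B_\psi(E',R)}(F^\e)^H\,d[g]=c_{\Ga_\rho,\psi}\int_{-E'}m^{\BR}_\psi(\phi^\e_{\vecz})\,d\vecz=c_{\Ga_\rho,\psi}\bigl(1+O(\e)\bigr)\om_\psi\bigl((E')^\pm_{(1+c)\e}\bigr).
\]
Letting $\e\to 0^+$ and invoking $\om_\psi(\partial E)=0$ to conclude $\om_\psi(E_\e^\pm)\to\om_\psi(E)$ finishes the argument with $c_\psi=c_{\Ga_\rho,\psi}$. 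Positivity of $c_\psi$ follows from $\kappa>0$ in Theorem \ref{prop.mixH}, from $\op{sk}_{\Ga_\rho,\psi}(H)>0$ (which uses the admissibility of $\cal P$ and Proposition \ref{lem.supp}(1)), and from convergence of the Gaussian integral $\int_{\ker\psi}e^{-I(\vecw)}\,d\vecw$.

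The main obstacle will be establishing the required uniformity in $R$ of these estimates. Because the sublevel sets $\{\psi<R\}\subset\fa^+$ are only linearly, not strictly, convex, their $\e$-thickenings cover a non-negligible fraction of the region for every $R$, and hence the smoothing corrections must be controlled on all scales simultaneously. This is precisely the reason why the \emph{uniform} exponential-decay bounds in Theorem \ref{prop.mixH} and Proposition \ref{prop.mixH2}, rather than mere pointwise asymptotics, are indispensable in the proof of Theorem \ref{thm3}: they provide the dominated-convergence estimate required to commute $R\to\infty$ with the $\ker\psi$-integration, and to keep the smoothing error $O(\e)$ uniformly in $R$.
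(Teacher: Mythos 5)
Your treatment of the single-orbit case $\cal P=\Ga_\rho T_0$ is essentially the paper's argument: the same sandwich via Proposition \ref{BETcount1} and Corollary \ref{cor.C}, a smoothing step (you smooth the point $[e]$ by periodising $\phi^\e$ rather than thickening the sets as in \eqref{Fpm} and Lemma \ref{KANstab}, which is an equivalent transposition, though note your ``lower-order boundary corrections'' claim is exactly the content of Lemma \ref{KANstab} and should be justified), then Theorem \ref{thm3} applied to the smoothed function and Corollary \ref{phibdd} to pass to $\om_\psi$, with $\om_\psi(\partial E)=0$ closing the argument. Your closing remarks on why the uniform bounds in Theorem \ref{prop.mixH} and Proposition \ref{prop.mixH2} are needed also match the paper's reasoning.

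The genuine gap is your one-sentence reduction to $\cal P=\Ga_\rho T_0$. Writing $T_i=g_iT_0$ and declaring that ``the role of $H$ and $T_0$ is played by $g_iHg_i^{-1}$ and $T_i$'' does not by itself yield the stated limit, because the entire machinery of Sections \ref{sec:adm}--\ref{sec.main} is built from the measures $\nu_\psi$, $m^{\BR}_\psi$, $\mu^{\PS}_{H,\psi}$ and $\om_\psi$ attached to the pair $(\Ga_\rho,H)$ with $H=\op{Stab}_G(T_0)$. Running the parallel analysis for the orbit $\Ga_\rho g_iT_0$ (equivalently, for the conjugate $\Ga_\rho^{g_i}=g_i^{-1}\Ga_\rho g_i$ acting on $T_0$, which is what the paper does) produces a limit expressed in terms of the conjugated data: a priori each orbit contributes $c_i\cdot\om_i(E)$ for a measure $\om_i$ built from the $(\Ga_\rho^{g_i},\psi)$-PS measure, not $c\cdot\om_\psi(E)$. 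To get the statement you must (a) check that $\psi$ is still $\Ga_\rho^{g_i}$-critical, i.e.\ that the growth indicator is conjugation-invariant (Lemma \ref{lem.G}); (b) check that $T_0$ is $\Ga_\rho^{g_i}$-admissible, so that Propositions \ref{prop.UP} and \ref{lem.supp} and the finiteness and positivity of the skinning constant are available for the conjugate; and (c), most importantly, identify the pushed-forward measure $\om_{g_i,\psi}\circ g_i^{-1}$ as a positive constant multiple of $\om_{\Ga_\rho,\psi}$ and transfer the Burger--Roblin integrals accordingly. Step (c) is not formal: in the paper it rests on the uniqueness of the $(\Ga_\rho,\psi)$-PS measure (Lemma \ref{lem.1-1}, \cite[Thm.\ 1.3]{LO}) together with explicit Busemann-cocycle computations (Lemmas \ref{lem.n} and \ref{omegg0}) and a conjugation formula for $m^{\BR}$ in the $KA\check N$ coordinates (Lemma \ref{BRconj1}). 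Without this transition material the ``summing the resulting asymptotics'' step does not produce $c_\psi\cdot\om_\psi(E)$, and this is precisely where roughly half of the paper's Section \ref{pm} is spent.
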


\subsection*{Special case: $\cal P=\Ga_\rho T_0$}
We will first prove Proposition \ref{prop.C} for the special case when $\cal P=\Ga_\rho T_0$.
This will allow us to apply the results obtained in previous sections.

Let $\cal D$ be as defined in \eqref{fixD}, and for any $R>0$, $A_R$ denote the $R$-neighborhood of $e$ in $A$. 
Fix closed cones $\cal D^\pm\subset\op{int}\frak a^+$ such that 
\begin{equation*}
    \cal L_\rho-\{0\}\subset \op{int}\cal D^-,\text{ }\cal D^--\{0\}\subset\op{int}\cal D,\text{ and }\cal D-\{0\}\subset \op{int}\cal D^+.
\end{equation*}
Let $D^\pm=\exp\cal D^\pm$ and $R_0>0$ be such that
$$
D^--A_{R_0}\subset\bigcap_{a\in A_1} Da,\qquad \bigcup_{a\in A_1} (D-A_{R_0})a\subset D^+.
$$
Recall the definitions of {$D_{\psi,R}^\pm$} and $E_\e^\pm$ from \eqref{eq.epm} 
{and \eqref{ddd}}.
Now defining 
\begin{align}\label{eq.bbb}
&{B}_\psi^0(E,R):=H\ba HK D_{[R_0,R)}N_{-E}\notag\\
&B_\psi^{\e}(E,R)^-:=H\ba HK  D_{[R_0,R)}^- N_{-E^-_{\e}}\text{ and }\notag\\
&B_\psi^{\e}(E,R)^+:=H\ba HK D_{[R_0,R)}^+ N_{-E^+_{\e}},
\end{align}
where $D^{\pm}_{[R_0,R)}=
{
D_{\psi,R}^\pm-A_{R_0}
}
$,
we have the following inclusions{.}

\begin{lem}\cite[Lemma 6.3]{OS} \label{KANstab}
For all $\e>0$ small enough, there exists a neighbourhood $\scrO_{\e}\subset G$ of the identity such that for all $R>R_0$,
$${B}_\psi^{\e}(E,R-\e)^-\subset  {B}_\psi^0(E,R)\scrO_{\e}\subset {B}_\psi^{\e}(E,R+\e)^+.$$
\end{lem}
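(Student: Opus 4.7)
My plan is to treat the two inclusions separately.

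For the first inclusion $B_\psi^\e(E,R-\e)^-\subset B_\psi^0(E,R)\scrO_\e$, no actual perturbation is needed. The strict cone inclusion $\cal D^-\setminus\{0\}\subset\op{int}\cal D$ yields $D^-\subset D$, and combined with the range inclusion $[R_0,R-\e)\subset[R_0,R)$ this gives $D^-_{[R_0,R-\e)}\subset D_{[R_0,R)}$; together with $E^-_\e\subset E$ from the definition \eqref{eq.epm} of the inner thickening, we conclude $B_\psi^\e(E,R-\e)^-\subset B_\psi^0(E,R)$, so the desired inclusion holds for any neighborhood $\scrO_\e$ of the identity.

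For the second inclusion, I plan to choose $\scrO_\e$ as a small product neighborhood $K_{\e'}A_{\e'}N_{\e'}$ of the identity, with $\e'=\e'(\e,E,D,D^+)>0$ to be determined. Given $x=kdn_{-\vecz}g$ with $k\in K$, $d\in D_{[R_0,R)}$, $\vecz\in E$, and $g=k_0 a_0 n_{\vecw_0}\in\scrO_\e$, I will first use the smoothness of the $KAN$-decomposition on the compact set $N_{-E}\cdot K_{\e'}$ to write
\[
n_{-\vecz}k_0 = k_1 a_1 n_{-\vecz_1}
\]
with $k_1,a_1$ small and $\|\vecz_1-\vecz\|$ small, uniformly for $\vecz\in E$. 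After commuting $n_{-\vecz_1}$ past $a_0$ (which only introduces a further small perturbation of $\vecz_1$), this reduces $x$ to the form $k(dk_1)\cdot a' n_{-\vecz'}$, where $a'\in A$ is small and $\vecz'$ is within $O(\e)$ of $\vecz$, hence in $E^+_\e$ for $\e'$ small.

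The crucial step will be to analyze $k(dk_1)$ modulo $H$ on the left. While standard Iwasawa decomposition gives no uniform control when $d\in D$ is unbounded (conjugation by large $d$ expands small $K$-perturbations), the quotient $H\backslash G$ is better behaved. My plan is to apply the generalized Cartan decomposition $G=HA^+K$ factorwise and to write $k\cdot dk_1 = h\cdot k^* a^*$ for some $h\in H$, $k^*\in K$, and $a^*\in A$, with $a^*\in d\cdot A_1$ once $\e'$ is sufficiently small. The hypothesis $\bigcup_{a\in A_1}(D-A_{R_0})a\subset D^+$, combined with the fact that $d\in D_{[R_0,R)}\subset D-A_{R_0}$ (because $\psi(\log d)\geq R_0$), then forces $a^*\in D^+$, while continuity of $\psi$ ensures $\psi(\log(a^*a'))<R+\e$ for $\e'$ small. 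Assembling yields $[x]=[h k^* (a^*a') n_{-\vecz'}]\in B_\psi^\e(E,R+\e)^+$ as required.

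The main obstacle will be this uniformity in $d$: the failure of standard Iwasawa perturbation to provide a bound independent of $d\in D_{[R_0,R)}$ as $R\to\infty$ is precisely why one must pass to $H\backslash G$, and it is why the cones $\cal D^\pm$ and the buffer $R_0$ must be fixed before $\e'$. This mirrors the strategy of \cite[Lemma 6.3]{OS} for the case $d=1$; the extension to general $d$ will be essentially formal because $K$, $A$, $N$, and $H$ are all products of their $\PSL_2(\c)$-factors, and the relevant decompositions respect the product structure.
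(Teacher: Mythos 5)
Your first inclusion and your Step 1 are fine: $B_\psi^{\e}(E,R-\e)^-\subset B_\psi^0(E,R)$ does follow from $\cal D^-\subset\cal D$, $[R_0,R-\e)\subset[R_0,R)$ and $E^-_\e\subset E$, and rewriting $x=kdn_{-\vecz}g$ as $k(dk_1)a'n_{-\vecz'}$ with $k_1,a'$ of size $O(\e')$ and $\vecz'$ within $O(\e')$ of $E$ is a correct, uniform use of the Iwasawa decomposition over the compact set $N_{-E}K_{\e'}$. The genuine gap is the key factorization $k\,dk_1=h\,k^*a^*$. The generalized Cartan decomposition gives $G=HA^+K$, i.e.\ the order $H\cdot A\cdot K$, not your order $H\cdot K\cdot A$; and in fact $HKA\subsetneq G$, with the elements you must treat lying outside it precisely in the regime $R\to\infty$. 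Identifying $H\ba G$ with the space of tori via $Hg\mapsto g^{-1}T_0$, the set $H\ba HKA$ consists of tori each of whose circles is a Euclidean scaling of a great circle, equivalently encloses $0$ (or is a line through $0$). Already in one $\PSL_2(\c)$-factor, take $k=e$ and $k_1$ a fixed small rotation with $k_1^{-1}(0)\neq 0$ (e.g.\ $k_1=\smatr{a}{b}{-\bar b}{\bar a}$ with small $b\neq 0$): then $(kdk_1)^{-1}C_0=k_1^{-1}\{|z|=e^{-t}\}$ is, once $e^{-t}\ll |k_1^{-1}(0)|$, a circle of radius about $e^{-t}$ centered at distance about $|k_1^{-1}(0)|$ from $0$, hence does not enclose $0$; so $kdk_1\notin HKA$ and no $h,k^*,a^*$ exist, let alone with $a^*\in dA_1$ (even with the correct order $ha^*\ell$, no argument is given that the $A^+$-part stays within bounded distance of $d$). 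Since $\cal D\subset\op{int}\fa^+$ is a cone, every coordinate of $\log d$ tends to infinity along $D_{[R_0,R)}$ as $R\to\infty$, so this failure is exactly the uniformity-in-$d$ phenomenon you flagged: a fixed $K$-perturbation carried across a large $d$ becomes macroscopic relative to the scale $e^{-t}$, and passing to $H\ba G$ does not absorb it.

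What absorbs it is the $N$-component, which your scheme has already frozen as $n_{-\vecz'}$ before treating $dk_1$. The proof of the cited rank-one statement \cite[Lem. 6.3]{OS}, which extends to the present setting, keeps $K$, $A$ and $N$ all flexible and uses contraction rather than $HA^+K$: write the small element $k_1$ in Bruhat coordinates $k_1=\check n_1a_1m_1n_1$ with all factors $O(\e')$; since $\log d\in\fa^+$, conjugation by $d$ contracts $\check N$, so $d\check n_1d^{-1}$ is still $O(\e')$ and its Iwasawa decomposition $k_2a_2n_2$ has all factors $O(\e')$; commuting the tiny $N$-pieces to the right (which only shrinks them further) and using $[M,A]=e$ gives $dk_1\in K_{c\e'}\,(dA_{c\e'})\,N_{c\e'}$, with no $H$-factor at all. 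Feeding this into $x=k(dk_1)a'n_{-\vecz'}$ yields $x\in K(dA_{c'\e'})N_{-E^+_{c'\e'}}$, and then the condition $\bigcup_{a\in A_1}(D-A_{R_0})a\subset D^+$ together with continuity of $\psi$ places the $A$-part in $D^+$ with $\psi<R+\e$, as you intended. Finally, note that the first inclusion in the trivial form you proved is weaker than what is actually used for the sandwich preceding \eqref{eq.WL}, which requires $B_\psi^{\e}(E,R-\e)^-g^{-1}\subset B_\psi^0(E,R)$ for every $g\in\scrO_\e$; that uniform version is obtained by the same contraction argument applied to the pair $(\cal D^-,\cal D)$, and it is what the other defining condition $D^--A_{R_0}\subset\bigcap_{a\in A_1}Da$ — never invoked in your proposal — is for.
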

We will now use the sets $B_\psi^{\e}(E,R\pm \e)^{\pm}$ to obtain the asymptotic of $\#\big([e]\Ga_\rho\cap {B}_\psi^0(E,R)\big)$.
Define functions $F_R$, and $F_R^{\e,\pm}$ on $\GaG$ by
$$ F_R([g]) := \sum_{\gamma\in (\Ga_\rho\cap H )\ba \Ga_\rho} \mathbbm{1}_{B_\psi^0(E,R)}(
{ H\ga g}
),$$
and 
\begin{equation}\label{Fpm} F_R^{\e,\pm}([g]) := \sum_{\gamma\in (\Ga_\rho\cap H )\ba \Ga_\rho} \mathbbm{1}_{B_\psi^{\e}(E,R\pm\e)^{\pm}}(
{ H\ga g}
).\end{equation}
Note that 
\begin{equation}\label{eq.FB}
F_R([e])=\#\big([e]\Ga_\rho\cap B_\psi^0(E,R)\big),    
\end{equation}
and by Lemma \ref{KANstab}, we have
$$ F_R^{\e,-}([ g])\leq F_R([e]) \leq F_R^{\e,+}([ g])$$
for all $[g]\in [e] \scrO_{\e}$ and all $\e$ {small enough and} less than the injectivity radius of $[e]\in\Ga_\rho\ba G$.
Now fix  any non-negative function $\phi^{\e}\in C_c(
[e]
\scrO_{\e})$  such that $\int \phi^{\e}([g])\,d[g]=1$
where $d[g]$ is a Haar measure on $\Ga_\rho\ba G$.
Then 
\begin{equation}\label{eq.WL}
    \langle F_R^{\e,-},\phi^{\e}\rangle\leq F_R([e]) \leq \langle F_R^{\e,+},\phi^{\e}\rangle. 
\end{equation}
where $\langle \psi_1, \psi_2\rangle=\int_{\Ga_\rho\ba G} \psi_1(
[g]
)\psi_2(
[g]
) d[g]$
whenever the integral converges.
We will use Theorem \ref{thm3} to estimate the integrals $ \langle F_R^{\e,\pm},\phi^{\e}\rangle$ (cf. \cite[(6.6), p. 30]{OS} and \cite[Proposition 9.10]{ELO}){.} 
\begin{prop}\label{equiint}
For any $\e>0$ small enough, {we have}
$$\langle F_R^{\e,\pm},\phi^{\e}{\rangle} \sim c_{\psi} e^{R\pm \e}\int_{-E_{\e}^{\pm}} \mBR(\phi_{\vecz}^{\e})\,d\vecz \quad\text{as $R\rightarrow\infty$},$$
where the constant $c_\psi =c_{\Ga_\rho,\psi}$ is given in Theorem \ref{thm3}.
\end{prop}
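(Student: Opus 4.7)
The plan is to unfold $\langle F_R^{\e,\pm}, \phi^\e \rangle$ into the shape of the integral appearing in Theorem \ref{thm3}, and then invoke that theorem with the perturbed data $(E^\pm_\e, R\pm\e, \cal D^\pm)$, absorbing the minor discrepancies (the lower truncation $\psi \ge R_0$ in the definition of $B_\psi^\e$, and the substitution of the cone $\cal D$ by $\cal D^\pm$) into $O(1)$ error terms.

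First I would unfold twice. Using the $\Ga_\rho$-invariance of $\phi^\e$ together with the definition \eqref{Fpm} of $F_R^{\e,\pm}$ as a sum over $\gamma \in (\Ga_\rho \cap H)\ba \Ga_\rho$, the pairing collapses to an integral over $(\Ga_\rho \cap H)\ba G$. Since $\mathbbm{1}_{B_\psi^\e(E, R\pm\e)^\pm}$ is pulled back from $H\ba G$ (and so $(\Ga_\rho \cap H)$-invariant on $G$), a further unfolding along the fibers of $(\Ga_\rho\cap H)\ba G \to H\ba G$ yields
\begin{align*}
\langle F_R^{\e,\pm}, \phi^\e \rangle_{\GaG} = \int_{H\ba G} \mathbbm{1}_{B_\psi^\e(E, R\pm\e)^\pm}([g]) \Bigl(\int_{(\Ga_\rho \cap H) \ba H} \phi^\e([hg])\,dh\Bigr)\, d[g],
\end{align*}
which is exactly the shape of the left-hand side of Theorem \ref{thm3} with $f = \phi^\e$.

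Next I would reconcile the domain with the reference set in Theorem \ref{thm3}, which applies to any closed cone $\cal D' \subset \inte\fa^+$ with $\inte \cal D' \supset \cal L_\rho - \{0\}$; in particular to the cones $\cal D^\pm$ fixed at the start of Section \ref{pm}. The set $B_\psi^\e(E, R\pm\e)^\pm$ differs from the reference set built from the cone $\cal D^\pm$, the set $E^\pm_\e$, and the scale $R\pm\e$ only by the bounded region $H\ba HK(\exp\cal D^\pm \cap \{\psi < R_0\})N_{-E^\pm_\e}$; this is bounded in $H\ba G$ because $\psi$ is positive (so its sublevel sets in $\fa^+$ are bounded). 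Since $\phi^\e$ is compactly supported on $\Ga_\rho\ba G$ and the inner $(\Ga_\rho\cap H)\ba H$-average is bounded on bounded subsets of $H\ba G$ by Proposition \ref{prop.UP}, this region contributes $O(1)$ to the double integral, hence $o(e^R)$.

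Finally, I would apply Theorem \ref{thm3} with $f = \phi^\e$, cone $\cal D^\pm$, set $E^\pm_\e$, and scale $R\pm\e$, having first restricted $\e$ to lie outside the (at most countable) set on which $\om_\psi(\partial E^\pm_\e) > 0$. This yields
\begin{align*}
\langle F_R^{\e,\pm}, \phi^\e \rangle_{\GaG} \sim c_\psi\, e^{R\pm\e} \int_{-E^\pm_\e} m^{\BR}_\psi(\phi^\e_\vecz)\,d\vecz \qquad (R \to \infty),
\end{align*}
which is the desired statement. The main bookkeeping obstacle I expect is ensuring the two unfoldings go through cleanly (Fubini for the tower $(\Ga_\rho\cap H)\ba G \to H\ba G$) and that the $\cal D \leftrightarrow \cal D^\pm$ substitution together with the $\psi \ge R_0$ truncation contribute only $O(1)$ errors; both points rest on the positivity of $\psi$ and on Proposition \ref{prop.UP}, which controls the support of the $(\Ga_\rho \cap H)\ba H$-averages on bounded subsets of $H\ba G$.
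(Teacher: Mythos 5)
Your proposal is correct and follows essentially the same route as the paper: unfold the pairing twice to obtain the double integral over $B_\psi^{\e}(E,R\pm\e)^{\pm}$ of the $(\Ga_\rho\cap H)\ba H$-average of $\phi^{\e}$, observe that this set differs from $B_{D^\pm,\psi}(E_\e^\pm,R\pm\e)$ only by a bounded region independent of $R$, and apply Theorem \ref{thm3} with the cone $\cal D^\pm$ and data $(E_\e^\pm, R\pm\e)$. Your additional remarks (boundedness of the inner average via properness/Proposition \ref{prop.UP}, and restricting $\e$ so that $\om_\psi(\partial E_\e^\pm)=0$) only make explicit points the paper leaves implicit.
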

\begin{proof}
Using unfolding, we have
\begin{align*}
\langle F_R^{\e,\pm},\phi^{\e}\rangle &=\int_{\GaG}\left( \sum_{\gamma\in (\Ga_\rho\cap H )\ba \Ga_\rho} \mathbbm{1}_{B_\psi^{\e}(E,R\pm\e)^{\pm}}(
{ H\ga g}) \right)\phi^{\e}([g])\,dg
\\& = \int_{\Ga_\rho\cap H \ba G} \mathbbm{1}_{B_\psi^{\e}(E,R\pm\e)^{\pm}}({Hg}) \phi^{\e}([g])\,dg
\\& =\int_{B_\psi^{\e}(E,R\pm\e)^{\pm}}\left(\int_{\Ga_\rho\cap H \ba H} \phi^{\e}({\Ga_\rho hg})dh \right)\,{d(Hg)}.
\end{align*}
Since the set difference between $B_\psi^\e(E,R\pm\e)^\pm$ and $B_{D^\pm,\psi}(E_\e^\pm,R\pm\e)$ is bounded independent of $R$,
Theorem \ref{thm3} then gives the claimed identity. 
\end{proof}

\begin{proof}[Proof of Proposition \ref{prop.C} when $\cal P=\Ga_\rho T_0$]
Note that the set difference between $B_\psi^0(E,R)$ and $B_{D,\psi}(E,R)$ is bounded independent of $R$.
Hence,
by Proposition \ref{BETcount1} and
Corollary \ref{cor.C},
\begin{align*}
 &\liminf_{R\to\infty} e^{-R}\#([e]\Ga_\rho \cap  
 B_\psi^0(E_{\e}^-,R)
 ) \leq\liminf_{R\to\infty}  e^{-R} N_R(\cal P,\psi,E)\\
&\leq\limsup_{R\to\infty}  e^{-R} N_R(\cal P,\psi,E)\leq \limsup_{R\to\infty}  e^{-R} \#([e]\Ga_\rho \cap 
B_\psi^0(E_{{{(\ell'+1)}}\e}^+,R+\ell{'}\e)
).
\end{align*}
Let $\e_0=(\ell{'}+2)\e$.
The above computation, combined with \eqref{eq.FB}, \eqref{eq.WL} and Proposition \ref{equiint} gives
\begin{align*} 
&c_{\psi} e^{-
\e_0
}\int_{-E_{
\e_0
}^{-}} \mBR(\phi_{\vecz}^{
\e_0
})\,d\vecz\leq \liminf_{R\rightarrow\infty} e^{-R} N_{R}(\cal P,\psi, E)\\&\leq \limsup_{R\rightarrow\infty} e^{-R }{N_{R}(\cal P, \psi, E)}\leq c_{\psi} e^{
\e_0
}\int_{-E_{
\e_0
}^{+}} \mBR(\phi_{\vecz}^{
\e_0
})\,d\vecz.
\end{align*}
Corollary \ref{phibdd} now gives
\begin{align*} 
& c_{\psi} e^{-
\e_0
}( 1-C
\e_0
)\,\om_{\psi} \big(E^-_{(2+c)
\e_0
}\big)\leq \liminf_{R\rightarrow\infty} e^{-R} N_{R}(\cal P,\psi, E)\\&\leq \limsup_{R\rightarrow\infty} e^{-R} N_{R}(\cal P,\psi, E)\leq c_{\psi} e^{
\e_0
}( 1+C
\e_0
)\,\om_{\psi} \big(E^+_{(
2+c)
\e_0
}\big).
\end{align*}
Since $\om_{\psi} (\partial E)=0$ by Lemma \ref{lem.F}, the
regularity of $\om_{\psi} $ gives
$$ \lim_{\e\rightarrow 0^+} c_{\psi} e^{\pm\e}( 1\pm C\e)\,\om_{\psi} \big(E^{\pm}_{(2+c)\e}\big)=c_{\psi} \,\om_{\psi} (E),$$
completing the proof.
\end{proof}

\subsection*{General case} Without loss of generality, we may assume that $\scrP$ consists of a single $\Ga_\rho$-orbit; hence
let $\scrP=\Ga_\rho T$ be a $\Ga_\rho$-admissible torus packing.
We write
$$ T= g_0T_0,$$
where $g_0=n_{z_0}{a_{t_0}}$; here $z_0$ is the vector consisting of the centers of the circles of $T$ and {$t_0=\log r_0$ where} $r_0=(r_1,\ldots,r_d)$ are the corresponding radii. 
Set 
$$
\Ga_\rho^{g_0}:=g_0^{-1}\Ga_\rho g_0.
$$

Note that
\begin{align*}
&N_R(\scrP,\psi,E)=\#\left\lbrace T'\in\Ga_\rho T\,:\, T'\cap E\neq \emptyset\;\mathrm{and}\;\psi(\vecv(T'))\leq R\right\rbrace    
\\&=\#\left\lbrace T'\in\Ga_\rho g_0T_0\,:\, T'\cap E\neq \emptyset\;\mathrm{and}\;\psi(\vecv (T'))\leq R\right\rbrace
\\&=\#\left\lbrace T'\in\Ga_\rho^{g_0} T_0\,:\, g_0T'\cap E\neq \emptyset\;\mathrm{and}\;\psi(\vecv (g_0T'))\leq R\right\rbrace.
\\&=\#\left\lbrace \gamma\in(\Ga_\rho^{g_0}\cap H)\ba\Ga_\rho^{g_0}\,:\, g_0\ga^{-1}T_0\cap E\neq \emptyset\;\mathrm{and}\;\psi(\vecv (g_0\ga^{-1}T_0))\leq R\right\rbrace.
\end{align*}
Similarly to Proposition \ref{BETcount1}, we can 
obtain the following estimate of
$N_R(\scrP,\psi,E)$ in terms of $
\tilde B_\psi(E_{\e}^\pm,R)
${.} 
\begin{prop} \label{BETg}
For any $\e>0$, there exists $q_0=q_0(\scrP,\e)>0$ such that for any $R>0$ and {any} bounded {measurable} 
 subset $E\subset \CC^d$, we have
$$
\#\big([e]\Ga_\rho^{g_0} \cap
\tilde B_\psi(E_\e^-,R)
g_0\big)-q_0 \leq N_{R}(\cal P,\psi,E)\leq \#\big([e]\Ga_\rho^{g_0}\cap
\tilde B_\psi(E_\e^+,R)
g_0\big)+q_0.
$$
\end{prop}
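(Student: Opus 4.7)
The plan is to adapt the proof of Proposition~\ref{BETcount1} directly, carrying the fixed conjugating element $g_0$ through the argument. The identity
$$
N_R(\cal P,\psi,E)=\#\{\gamma\in(\Ga_\rho^{g_0}\cap H)\ba \Ga_\rho^{g_0}\,:\, g_0\gamma^{-1}T_0\cap E\neq\emptyset,\;\psi(\vecv(g_0\gamma^{-1}T_0))\le R\}
$$
derived in the paragraph preceding the statement is the natural starting point, and the goal is to sandwich the right-hand side between counts of $\Ga_\rho^{g_0}$-orbits meeting $\tilde B_\psi(E_\e^\pm,R)g_0$.

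First I would unfold the set $\tilde B_\psi(E_\e^\pm,R)g_0=H\ba HKA^+_{\psi,R}N_{-E_\e^\pm}g_0$: a coset $H\gamma$ with $\gamma\in\Ga_\rho^{g_0}$ lies in it iff $g_0\gamma^{-1}\in N_{E_\e^\pm}(A^+_{\psi,R})^{-1}KH$, and applying to the base point $o\in\prod_i\bH^3$ (using $HK\cdot o=H\cdot o=\hat T_0$ and $K\cdot o=o$) converts this into the geometric condition
$$
g_0\gamma^{-1}\hat T_0 \,\cap\, N_{E_\e^\pm}(A^+_{\psi,R})^{-1}\cdot o\,\neq\,\emptyset,
$$
which is the direct analog of the expression that appears in the proof of Proposition~\ref{BETcount1}, applied here to the tori $T'=g_0\gamma^{-1}T_0\in\cal P$.

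Next, the hemisphere/disk geometric observation at the heart of Proposition~\ref{BETcount1} applies without change to the tori $T'=(C'_1,\ldots,C'_d)\in\cal P$: for ``non-exceptional'' tori with $\sum_i\mathsf r(C'_i)^2\le\e^2/4$, a point $(z_i,e^{-t_i})\in \hat T'\cap N_{E_\e^-}(A^+_{\psi,R})^{-1}o$ forces $\mathsf r(C'_i)\ge e^{-t_i}$ and $|z_i-c'_i|\le\mathsf r(C'_i)$, whence positivity of $\psi$ (which makes $\psi$ monotone on $\fa^+$) yields $\psi(\vecv(T'))\le R$ and the radius bound $\mathsf r(C'_i)\le\e/2$ yields $T'\subset B(z,\e)+T'\cap E\neq\emptyset$; conversely, $T'\cap E\neq\emptyset$ and $\psi(\vecv(T'))<R$ imply the convex-hull condition with $E_\e^+$ by taking $z=(c'_i)$ and $t=\vecv(T')$. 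The argument is intrinsic to the torus $T'$ and is independent of how it is realized as $g_0\gamma^{-1}T_0$.

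The remaining ``exceptional'' tori with $\sum_i\mathsf r(C'_i)^2>\e^2/4$ are absorbed into a single finite correction $q_0=q_0(\cal P,E,\e)$ as in \eqref{qzero} (possibly after a fixed $\e$-enlargement of $E$ to uniformly bound contributions on both sides of the sandwich). Its finiteness follows from the local finiteness of each circle packing $\rho_i(\Ga)C_i$ exactly as in Lemma~\ref{fin5}, and it is automatically independent of $R$. The main obstacle is essentially only the bookkeeping around the right-multiplication by $g_0$ in the unfolding step; since $g_0$ is fixed, no new analytic content beyond Proposition~\ref{BETcount1} is required.
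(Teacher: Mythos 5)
Your proposal matches the paper's treatment: the paper proves Proposition \ref{BETg} simply by carrying out the argument of Proposition \ref{BETcount1} with the extra right factor $g_0$, and your unfolding of $\tilde B_\psi(E_\e^\pm,R)g_0$ together with the observation that the hemisphere geometry is intrinsic to the torus $T'=g_0\gamma^{-1}T_0$ is exactly that adaptation. The one caveat you already flag --- that the finite correction $q_0$ must be taken with respect to a fixed enlargement of $E$, independent of $R$ --- is handled no more carefully in the paper's own Proposition \ref{BETcount1}, so your proof is correct and essentially the same as the paper's.
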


Note that $\Ga_\rho^{g_0}$ is also a self-joing of convex cocompact representations.
Let $\La_{\rho}^{g_0}$ and $\cal L_{\rho}^{g_0}$ denote its limit set and limit cone, respectively.
It is immediate from the definition that
$$
\La_{\rho}^{g_0}=g_0^{-1}\La_{\rho}\text{ and }\cal L_{\rho}^{g_0}=\cal L_{\rho}.
$$
Now, writing $g_0=(g_{0,1},\cdots,g_{0,d})$, the homeomorphisms in \eqref{eq.EQV} associated to $\Ga_\rho^{g_0}$ can be written as $g_{0,i}^{-1}f_i g_{0,1}$ $(1\leq i\leq d)$.
A direct computation shows that $T$ is $\Ga_\rho$-admissible if and only if $T_0$ is $\Ga_\rho^{g_0}$-admissible.
Hence, we can apply the results obtained in previous sections for a new subgroup $\Ga_\rho^{g_0}$.

\subsection*{Transition from $\Ga_\rho$ to $\Ga_\rho^{g_0}$}
Let $\Phi_\rho^{g_0}=\Phi_{\Ga_\rho^{g_0}}$ denote the growth indicator function associated to $\Ga_{\rho}^{g_0}$.
The following lemma is standard and can be proved {using \cite[Lem. 4.6]{B}, \cite[Lem. 3.1.6]{Qu1} and} the definition of $\Phi_\rho${.} 
\begin{lem}\label{lem.G}
We have
$$ \Phi_{\rho}^{g_0}=\Phi_{\rho}.$$
\end{lem}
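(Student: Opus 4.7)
\textbf{Proof proposal for Lemma \ref{lem.G}.} The plan is to exploit the fact that conjugation by a fixed element $g_0 \in G$ perturbs the Cartan projection by a uniformly bounded amount, and that the growth indicator function $\Phi_\rho$ is insensitive to such bounded perturbations.

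First I would establish the quantitative fact: writing $g_0 = (g_{0,1}, \ldots, g_{0,d})$, for every $\gamma \in \Gamma_\rho$ and each component $i$, the triangle inequality gives
\begin{equation*}
\bigl| \mathsf{d}(g_{0,i}^{-1} \rho_i(\gamma) g_{0,i}\, o, o) - \mathsf{d}(\rho_i(\gamma)\, o, o) \bigr| \leq 2\,\mathsf{d}(g_{0,i}\, o, o).
\end{equation*}
Setting $C_0 := 2\, \| \mu(g_0) \|$, it follows that $\| \mu(g_0^{-1} \gamma g_0) - \mu(\gamma) \| \leq C_0$ for all $\gamma \in \Gamma_\rho$. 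Thus the two orbit projections $\{ \mu(\gamma) : \gamma \in \Gamma_\rho \}$ and $\{ \mu(\gamma') : \gamma' \in \Gamma_\rho^{g_0} \}$ are in bijection via $\gamma \leftrightarrow g_0^{-1} \gamma g_0$, with corresponding points at distance $\leq C_0$.

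Next I would show that for each $u \in \mathfrak{a}^+ - \{0\}$, the abscissae of convergence in the definition of $\Phi_\rho(u)$ and $\Phi_\rho^{g_0}(u)$ agree. Given an open cone $\mathcal{D} \subset \mathfrak{a}^+$ with $u \in \mathcal{D}$, I would choose a slightly smaller open cone $\mathcal{D}' \subset \mathcal{D}$ with $u \in \mathcal{D}'$ such that the $C_0$-neighborhood of $\{v \in \mathcal{D}' : \|v\| \geq R\}$ is contained in $\mathcal{D}$ for some $R > 0$; this is possible because $\mathcal{D}$ is open and the cone property makes angular separation into an unbounded Euclidean separation at large scales. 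Then for $\gamma \in \Gamma_\rho$ with $\mu(\gamma) \in \mathcal{D}'$ and $\|\mu(\gamma)\| \geq R$, the bound from the first step ensures $\mu(g_0^{-1} \gamma g_0) \in \mathcal{D}$, and the norms differ by at most $C_0$. Hence
\begin{equation*}
\sum_{\substack{\gamma \in \Gamma_\rho \\ \mu(\gamma) \in \mathcal{D}'}} e^{-s\|\mu(\gamma)\|} \leq e^{sC_0} \sum_{\substack{\gamma' \in \Gamma_\rho^{g_0} \\ \mu(\gamma') \in \mathcal{D}}} e^{-s\|\mu(\gamma')\|} + \text{(finitely many terms)},
\end{equation*}
which gives $\tau_{\mathcal{D}'}^{\Gamma_\rho} \leq \tau_{\mathcal{D}}^{\Gamma_\rho^{g_0}}$.

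Taking the infimum over open cones $\mathcal{D}$ on the right and noting that the set of admissible $\mathcal{D}'$ is cofinal among open cones containing $u$, this yields $\Phi_\rho(u) \leq \Phi_\rho^{g_0}(u)$. The reverse inequality follows by the same argument applied to $g_0^{-1}$ in place of $g_0$, using that $\Gamma_\rho = g_0 \Gamma_\rho^{g_0} g_0^{-1}$. I do not foresee any serious obstacle: the only technical point is choosing $\mathcal{D}'$ so that a $C_0$-bounded perturbation stays inside $\mathcal{D}$, which is routine for open cones since passing from $\mathcal{D}'$ to its $C_0$-neighborhood only affects vectors of bounded norm, and these contribute only finitely many terms to the Poincaré series.
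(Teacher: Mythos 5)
Your argument is correct, and it is precisely the standard direct-from-the-definition argument the paper has in mind: the paper gives no proof, stating only that the lemma is standard and provable directly from the definition of $\Phi_\rho$, which your bounded-perturbation estimate on the Cartan projection combined with the cone-shrinking comparison of Poincar\'e series carries out. The only cosmetic point is that when $\mathcal{D}'$ meets $\partial\mathfrak{a}^+$ the $C_0$-neighborhood should be understood intersected with $\mathfrak{a}^+$, which is harmless since all Cartan projections $\mu(\gamma')$ automatically lie in $\mathfrak{a}^+$.
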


Since $\psi$ is $\Ga_\rho$-critical, it follows from Lemma \ref{lem.G} that $\psi$ is $\Ga_\rho^{g_0}$-critical.
{The unique unit vectors, as provided by Lemma \ref{lem.1-1} remain the same, regardless of whether we view $\psi$ as a $\Ga_\rho$-critical linear form or $\Ga_\rho^{g_0}$-critical linear form.}
Let $\nu_{\psi}^{g_0}$ denote the
$(\Ga_\rho^{g_0},\psi)$-$\PS$ {probability} measure supported on $\La_{\rho}^{g_0}$.
Define a measure
 $\widetilde{\nu}_{\psi}^{g_0}$
 on $\hat{\CC}^d$ via the formula
 $$ d\widetilde{\nu}_{\psi}^{g_0}(z)=e^{-\psi( \beta_{z}(o,g_0\cdot o))}d((g_0)_*\nu_{\psi}^{g_0})(z).$$
 \begin{lem}\label{lem.n}
 We have:
 $$
 \frac{\widetilde{\nu}_{\psi}^{g_0}}{|\widetilde{\nu}_{\psi}^{g_0}|}=\nu_{\psi}.
 $$
 \end{lem}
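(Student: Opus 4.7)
The strategy is to leverage the uniqueness statement in Lemma \ref{lem.1-1}: since $\nu_\psi$ is the unique $(\Ga_\rho,\psi)$-PS probability measure, it suffices to show that $\widetilde{\nu}_\psi^{g_0}$ is a finite, nonzero Borel measure supported on $\La_\rho$ satisfying the $(\Ga_\rho,\psi)$-conformal transformation rule. Normalizing will then yield the claimed identity.

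First I would verify the support and finiteness. Since $\nu_\psi^{g_0}$ is supported on $\La_\rho^{g_0}=g_0^{-1}\La_\rho$, the pushforward $(g_0)_*\nu_\psi^{g_0}$ is supported on $\La_\rho$, and the density $z\mapsto e^{-\psi(\beta_z(o,g_0\cdot o))}$ is continuous and positive on the compact set $\La_\rho$, so $\widetilde{\nu}_\psi^{g_0}$ is a finite, nonzero measure on $\La_\rho$.

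The main step is to establish the conformal property $d\ga_*\widetilde{\nu}_\psi^{g_0}/d\widetilde{\nu}_\psi^{g_0}(z)=e^{-\psi(\beta_z(\ga o,o))}$ for all $\ga\in\Ga_\rho$. Given such a $\ga$, I would write $\ga=g_0\ga' g_0^{-1}$ with $\ga':=g_0^{-1}\ga g_0\in\Ga_\rho^{g_0}$. For a test function $f$, one unfolds
\[
\int f(\ga z)\,d\widetilde{\nu}_\psi^{g_0}(z)=\int f(g_0\ga' w)\,e^{-\psi(\beta_{g_0w}(o,g_0o))}\,d\nu_\psi^{g_0}(w),
\]
changes variables $u=\ga' w$ using the $(\Ga_\rho^{g_0},\psi)$-conformality $d(\ga')_*\nu_\psi^{g_0}(u)=e^{-\psi(\beta_u(\ga' o,o))}d\nu_\psi^{g_0}(u)$, and then repackages the exponents via the Busemann cocycle $\beta_\xi(x,z)=\beta_\xi(x,y)+\beta_\xi(y,z)$ and the $G$-equivariance $\beta_{g\xi}(gx,gy)=\beta_\xi(x,y)$. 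The two exponent terms combine to $-\psi(\beta_u(g_0^{-1}\ga o,o))=-\psi(\beta_{g_0u}(\ga o,g_0o))$, and a final application of the cocycle relation splits this as $-\psi(\beta_{g_0u}(\ga o,o))-\psi(\beta_{g_0u}(o,g_0o))$. Re-assembling $(g_0)_*\nu_\psi^{g_0}$ and the density $e^{-\psi(\beta_z(o,g_0o))}$ back into $\widetilde{\nu}_\psi^{g_0}$ yields the desired identity.

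This is essentially a bookkeeping exercise; the only conceivable obstacle is sign conventions for the Busemann function and the direction of the cocycle, so I would be meticulous in aligning them with the convention used in Lemma \ref{lem.1-1} before carrying out the manipulation. Once the conformal property is verified, uniqueness from Lemma \ref{lem.1-1} forces $\widetilde{\nu}_\psi^{g_0}/|\widetilde{\nu}_\psi^{g_0}|=\nu_\psi$.
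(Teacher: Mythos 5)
Your proposal is correct and follows essentially the same route as the paper: verify that $\widetilde{\nu}_{\psi}^{g_0}$ is supported on $\La_\rho$ and satisfies the $(\Ga_\rho,\psi)$-conformal transformation rule via the Busemann cocycle and equivariance identities, then invoke uniqueness of the $(\Ga_\rho,\psi)$-PS measure to conclude. The only cosmetic difference is that you unfold against test functions while the paper manipulates the Radon--Nikodym derivative directly.
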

 \begin{proof}
  Since the support of $\nu_{\psi}^{g_0}$ is $\La_\rho^{g_0}=g_0^{-1}\La_\rho$, we have
  $$
  (g_0)_*\nu_{\psi}^{g_0}({\Lambda_{\rho}})=\nu_{\psi}^{g_0}(g_0^{-1}\Lambda_\rho)=1.
  $$
  Therefore, $\widetilde{\nu}_{\psi}^{g_0}$ is also supported on $\Lambda_\rho$.
  Furthermore, for any $\ga\in\Ga_\rho$, we have
\begin{align*}d\ga_*\widetilde{\nu}_{\psi}^{g_0}(z)&=e^{-\psi( \beta_{\ga^{-1}{\;} z}(o,g_0{\;} o))}d\ga_*(g_0)_*\nu_{\psi}^{g_0}(z)
\\&=e^{-\psi( \beta_{ z}(\ga{\;} o,\ga g_0{\;} o))}d(g_0)_{*}(g_0^{-1} \ga g_0)_*\nu_{\psi}^{g_0}(z)
\\&=e^{-\psi(\beta_{ z}(\ga{\;} o,\ga g_0{\;} o))}e^{-\psi(\beta_{g_0^{-1}{\;} z}(g_0^{-1}\ga g_0{\;} o,o))}d (  g_0)_*\nu_{\psi}^{g_0}(z)
\\&=e^{\psi(\beta_{z}(\ga g_0{\;} o,\ga{\;} o))}e^{\psi(\beta_{ z}(g_0{\;} o,\ga g_0{\;} o))}e^{\psi( \beta_{z}(o,g_0{\;} o))} d\widetilde{\nu}_{\psi}^{g_0}(z)
\\&=e^{\psi(\beta_{z}(o,g_0{\;} o))} e^{\psi(\beta_{ z}(g_0{\;} o,\ga g_0{\;} o))}e^{\psi(\beta_{ z}(\ga g_0{\;} o,\ga{\;} o))}d\widetilde{\nu}_{\psi}^{g_0}(z)
\\&=e^{\psi(\beta_{z}(o,\ga{\;} o))}d\widetilde{\nu}_{\psi}^{g_0}(z),
\end{align*}
i.e. 
$$ \frac{d\ga_*\widetilde{\nu}_{\psi}^{g_0}}{d\widetilde{\nu}_{\psi}^{g_0}}(z)=e^{-\psi( \beta_{z}(\ga{\;} o,o))}.$$
 This shows that $\frac{\widetilde{\nu}_{\psi}^{g_0}}{|\widetilde{\nu}_{\psi}^{g_0}|}$ is a $(\Ga_\rho,\psi)$-$\PS$ {probability} measure.
 By \cite[Thm. 1.3]{LO}, $\nu_{\psi}$ is the unique 
 $(\Ga_\rho,\psi)$-$\PS$ {probability} measure,
 and hence the lemma is proved.
 \end{proof}
 Let $\om_{{g_0},\psi}$ denote the measure defined as in \eqref{eq.om}, associated to $\Ga_\rho^{g_0}$ and $\psi$.
 Using Lemma \ref{lem.n}, we can now show 
 {the following result.}
 \begin{lem}\label{omegg0}
 There exists $c_{g_0}>0$ such that
  $$\omega_{g_0, \psi}(g_0^{-1}{\;} E)=c_{g_0}{\;}\omega_{\psi}(E) $$
  for all Borel sets $E\subset \CC^d$.
 \end{lem}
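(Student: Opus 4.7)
The plan is to reduce $\omega_{g_0,\psi}(g_0^{-1} E)$ to an integral against $\nu_\psi$ supported on $E$, and then match the integrand against the density defining $\omega_\psi(E)$.

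First, starting from the definition $d\omega_{g_0,\psi}(z) = e^{\psi(\beta_z(o, n_z o))}\, d\nu_\psi^{g_0}(z)$, I would change variables via the substitution $z = g_0^{-1} w$ to obtain
$$\omega_{g_0,\psi}(g_0^{-1} E) = \int_E e^{\psi(\beta_{g_0^{-1} w}(o, n_{g_0^{-1} w} o))} \, d(g_0)_*\nu_\psi^{g_0}(w).$$
Next, I would invoke Lemma \ref{lem.n}, which gives $\tilde\nu_\psi^{g_0} = c \cdot \nu_\psi$ with $c := |\tilde\nu_\psi^{g_0}| > 0$. Unpacking the definition of $\tilde\nu_\psi^{g_0}$ then yields
$$d(g_0)_* \nu_\psi^{g_0}(w) = c \cdot e^{\psi(\beta_w(o, g_0 o))} \, d\nu_\psi(w).$$

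The heart of the argument is then simplifying the combined exponent $\psi(\beta_{g_0^{-1} w}(o, n_{g_0^{-1} w} o)) + \psi(\beta_w(o, g_0 o))$. Writing $g_0 = n_{z_0} a_{\log r_0}$ and using the commutation $a_t n_u a_{-t} = n_{e^t u}$ componentwise, one first checks the identity $g_0 n_{g_0^{-1} w} = n_w a_{\log r_0}$. The $G$-equivariance $\beta_\xi(gx, gy) = \beta_{g^{-1}\xi}(x, y)$ then gives
$$\beta_{g_0^{-1} w}(o, n_{g_0^{-1} w} o) = \beta_w(g_0 o, n_w a_{\log r_0} o).$$
Applying the cocycle identity, using $n_w^{-1} \cdot w = 0$ together with the explicit formula $\beta_0(o, a_{\log r_0} o) = \log r_0 \in \fa$ (computed factor by factor in $\bH^3$), this simplifies to
$$\beta_{g_0^{-1} w}(o, n_{g_0^{-1} w} o) = -\beta_w(o, g_0 o) + \beta_w(o, n_w o) + \log r_0.$$

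Finally, after applying $\psi$ the $\beta_w(o, g_0 o)$ terms cancel, leaving
$$\omega_{g_0,\psi}(g_0^{-1} E) = c \cdot e^{\psi(\log r_0)} \int_E e^{\psi(\beta_w(o, n_w o))} \, d\nu_\psi(w) = c_{g_0} \, \omega_\psi(E),$$
with $c_{g_0} := c \cdot e^{\psi(\log r_0)} > 0$. The main obstacle is bookkeeping: keeping sign conventions for the Busemann cocycle consistent with those of Section \ref{sec:pre} (in particular verifying $\beta_0(o, a_t o) = t$ as an $\fa$-valued cocycle), and confirming the commutation identity $g_0 n_{g_0^{-1} w} = n_w a_{\log r_0}$. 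Once these are in hand, the proof is a direct unwinding of definitions combined with Lemma \ref{lem.n}.
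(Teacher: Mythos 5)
Your route is the same as the paper's: push forward by $g_0$, use the identity $g_0 n_{g_0^{-1}w}=n_w a_{\log r_0}$ together with equivariance and the cocycle property of the $\fa$-valued Busemann map, and invoke Lemma \ref{lem.n} to replace $(g_0)_*\nu_\psi^{g_0}$ by $|\widetilde{\nu}_\psi^{g_0}|\,e^{\psi(\beta_w(o,g_0o))}\,d\nu_\psi(w)$; all of these steps are correct and match the paper's computation. The one slip is the sign of the final Busemann value: with the paper's convention $\beta_\xi(x,y)=\lim_t\big(\mathsf d(x,\xi(t))-\mathsf d(y,\xi(t))\big)$ one has $\beta_0(o,a_to)=-t$ (the ray to $0$ is $(0,e^{-s})$, and $d(a_to,(0,e^{-s}))=s+t$), so the residual term in your exponent is $-\log r_0$, not $+\log r_0$, and the constant is $c_{g_0}=|\widetilde{\nu}_\psi^{g_0}|\,e^{-\psi(\log r_0)}$, exactly as in the paper ($|\widetilde{\nu}_\psi^{g_0}|e^{-\psi(t_0)}$). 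Since the lemma only asserts the existence of some positive constant, and the later argument never uses its explicit value (the eventual constant is shown to be independent of $g_0$ indirectly), this sign error does not affect the validity of your proof of the statement, but you should correct it.
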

 \begin{proof}
By { Definition \ref{eq.om}}, 
 \begin{align*}
\omega_{g_0, \psi}(g_0^{-1}{\;} E)=\int_E d (g_0)_*\omega_{g_0, \psi}(z)= \int_E e^{\psi( \beta_{g_0^{-1}{\;} z}(o,n_{g_0^{-1}z}o))}\,d(g_0)_*\nu_{\psi}^{g_0}(z).
 \end{align*}
 Now writing {as seen above} $g_0=n_{z_0}a_{t_0}$, we thus have {by Lemma \ref{lem.n}}
 \begin{align*}
\omega_{g_0, \psi}(g_0^{-1}{\;} E)=& \int_E e^{\psi(\beta_{g_0^{-1}{\;} z}(o,g_0^{-1}n_{z}a_{t_0}{\;} o))} e^{\psi(\beta_{z}(o,g_0{\;} o))}\,d\widetilde{\nu}_{\psi}^{g_0}(z)
 \\&=|\widetilde{\nu}_{\psi}^{g_0}| \int_E e^{\psi(\beta_{ z}(g_0{\;} o,n_{z}a_{t_0}{\;} o))}e^{\psi(\beta_{z}(o,g_0{\;} o))}\,d\nu_{\psi}^{}(z)
  \\&=|\widetilde{\nu}_{\psi}^{g_0}| \int_E e^{\psi(\beta_{ z}( o,n_{z}a_{t_0}{\;} o))}e^{-\psi(\beta_{z}(o,n_z{\;} o))}\,d\omega_{\psi}(z)
  \\&=|\widetilde{\nu}_{\psi}^{g_0}| \int_E e^{\psi(\beta_{ z}( n_z{\;} o,n_{z}a_{t_0}{\;} o))}\,d\omega_{\psi}(z)\\
  &=|\widetilde{\nu}_{\psi}^{g_0}| e^{\psi( \beta_{ 0}(  o, a_{t_0}{\;} o))}\omega_{\psi}(E)
  \\&= |\widetilde{\nu}_{\psi}^{g_0}|\, e^{-\psi( t_0)}\omega_{\psi}(E),
 \end{align*}
 as desired.
 \end{proof}
Next, let $m_{\Ga_\rho^{g_0},\psi}^{\BR}=m^{\BR}_{g_0, \psi}$ denote the Burger-Roblin measure associated to $\Ga_\rho^{g_0}$ and the linear form $\psi$. 
{
Denote the right $G$-action on functions on $\Ga_\rho^{g_0} \ba G$ by $(g\cdot f)([ h])=f([ h g])$ and let $G_\e$ be the $\e$-neighborhood of $e$ in $G$.
For any $\phi^{\e}\in C_c(\Ga_\rho^{g_0}\ba G)$ whose support is contained in $[e]G_\e$, we have the following.
}
\begin{lem}\label{BRconj1} For all small enough $\epsilon>0$, 
$$\int_{-E_{\e}^{\pm}} m^{\BR}_{g_0, \psi}\big((g_0\cdot\phi^{\e})_z\big)\,dz=e^{
\psi({t_0})} \int_{-g_0^{-1}{\;} E_{\e}^{\pm}} m^{\BR}_{g_0, \psi} \big(\phi^{\e}_z\big)\,dz.$$
\end{lem}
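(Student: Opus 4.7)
The identity is a direct change-of-variables computation. From the context of the preceding proof, $g_0\cdot\phi^\e$ denotes the right translate $(g_0\cdot\phi^\e)(x):=\phi^\e(xg_0)$; this is the function one naturally obtains after substituting $x\mapsto xg_0^{-1}$ inside the $H\backslash G$-integral of Theorem \ref{thm3}, in order to convert averages over $\tilde B_\psi(E,R)g_0$ into averages over $\tilde B_\psi(E,R)$. My plan is to carry $g_0=n_{z_0}a_{\log r_0}$ through both the $M$-average defining $(g_0\cdot\phi^\e)_z$ and through the Burger--Roblin measure $m^{\BR}_{g_0,\psi}$, and then absorb the resulting exponential factors by the substitution $z'=g_0^{-1}\cdot z$ in the outer integral.

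First I would move $a_{\log r_0}$ to the left past $n_{z+z_0}$ and past $m$ using the commutation relations $n_u a_t=a_t n_{e^{-t}u}$ (applied componentwise in $\prod_i\PSL_2(\c)$) and $am=ma$ for $a\in A$, $m\in M$, together with $n_z n_{z_0}=n_{z+z_0}$. This gives
\[
(g_0\cdot\phi^\e)_z(y)=\int_M\phi^\e(y m n_{z+z_0}a_{\log r_0})\,dm=\int_M\phi^\e(y a_{\log r_0} m n_{r_0^{-1}(z+z_0)})\,dm=\phi^\e_{r_0^{-1}(z+z_0)}(y a_{\log r_0}).
\]

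Next I would derive the right-translation scaling rule
\[
\int f(y a_t)\,dm^{\BR}_{g_0,\psi}(y)=e^{(\psi-2\sigma)(t)}\int f(y)\,dm^{\BR}_{g_0,\psi}(y)
\]
directly from the Hopf-coordinate formula \eqref{eq.BMS0} (with $\nu_\psi$ replaced by $\nu_\psi^{g_0}$): under $g\mapsto g a_t$ one has $(ga_t)^\pm=g^\pm$, $\beta_{g^+}(o,ga_t o)=\beta_{g^+}(o,go)-t$, $\beta_{g^-}(o,ga_t o)=\beta_{g^-}(o,go)+t$, and the Lebesgue variable $b=\beta_{g^-}(o,go)$ is shifted by $t$; tracking these three contributions yields a net density factor of $e^{(2\sigma-\psi)(t)}$, which after re-centering $b$ produces the stated scaling. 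Applying this rule with $t=\log r_0$ to the identity from the previous step gives
\[
m^{\BR}_{g_0,\psi}\big((g_0\cdot\phi^\e)_z\big)=e^{(\psi-2\sigma)(\log r_0)}\,m^{\BR}_{g_0,\psi}\big(\phi^\e_{r_0^{-1}(z+z_0)}\big).
\]

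Finally I would substitute $z'=r_0^{-1}(z+z_0)$ in the outer integral. Since $g_0^{-1}\cdot w=r_0^{-1}(w-z_0)$, when $z=-w$ ranges over $-E_\e^\pm$ one has $z'=-g_0^{-1}\cdot w$ ranging over $-g_0^{-1}\cdot E_\e^\pm$, and the dilation $z=r_0 z'-z_0$ has Jacobian $\prod_i r_i^2=e^{2\sigma(\log r_0)}$ on $\c^d$. Combining the two exponential factors, $(\psi-2\sigma)(\log r_0)+2\sigma(\log r_0)=\psi(\log r_0)$, produces the desired identity. The only step that requires genuine verification is the $a_t$-transformation rule for $m^{\BR}_{g_0,\psi}$; the rest is a chain of commutation identities in $G$ together with the standard dilation Jacobian on $\c^d$.
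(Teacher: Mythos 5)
Your argument is correct and is essentially the computation the paper gives: both proofs push $g_0=n_{z_0}a_{\log r_0}$ through $Mn_z$ via $n_wa_t=a_tn_{e^{-t}w}$, use the quasi-invariance of $m^{\BR}_{g_0,\psi}$ under right translation by $a_{\log r_0}$ (factor $e^{(\psi-2\sigma)(\log r_0)}$), and absorb the Jacobian $e^{2\sigma(\log r_0)}$ from the affine substitution in $z$, which combine to $e^{\psi(\log r_0)}$ with the domain becoming $-g_0^{-1}\cdot E_{\e}^{\pm}$. The only cosmetic difference is that you verify the $A$-scaling of the Burger--Roblin measure directly from the Hopf-coordinate formula \eqref{eq.BMS0} (your bookkeeping of the shifts of $\beta_{g^{\pm}}(o,ga_to)$ and of $b$ is accurate), whereas the paper derives the same factor from the $G=KA\check N$ expression \eqref{KAN}.
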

 \begin{proof}
Denote the 
$\Ga_\rho^{g_0} $
-invariant lift of $m^{\BR}_{g_0, \psi}$ to $G$ by $\widetilde{m}^{\op{BR}}_{g_0,\psi}$.
We will use the $G=KA{\check N}$ decomposition to write
\begin{equation}\label{KAN} d\widetilde{m}^{\op{BR}}_{g_0,\psi}(kau)=e^{-
\psi({\log a})
} du\,da\,d\cl{\nu}_\psi^{g_0}(k) \end{equation}
{by \cite[Lem. 4.9]{ELO},}
 where the measure $\cl{\nu}_\psi^{g_0}$ on $K$ is defined by 
 $$\int_{K}f(k)\,d\cl{\nu}_\psi^{g_0}(k)=\int_{K/M}\int_M f(km)\,dm\,d\nu_{\psi}^{g_0}(k^+)$$ for all $f\in C(K)$.
 For any $f\in C_c(G)$, and {measurable} $L\subset \CC^d$, using the fact that $g_0=n_{z_0}a_{{t_0}}$, we have
 \begin{align*}
\int_{-L} \widetilde{m}^{\op{BR}}_{g_0,\psi}\big((g_0\cdot f)_z\big)\,dz& =\int_{-L}\int_M \int_G f(gmn_zg_0)\,d\widetilde{m}^{\op{BR}}_{g_0,\psi}(g)\,dm\,dz\\
 =&\int_{z_0-L}\int_M \int_G f\big(gmn_za_{{t_0}}\big)\,d\widetilde{m}^{\op{BR}}_{g_0,\psi}(g)\,dm\,dz\\
 =e^{2\sigma({t_0})}&\int_{a_{-{t_0}}{\;}(z_0-L)}\int_M \int_G f\big(ga_{{t_0}}mn_z\big)\,d\widetilde{m}^{\op{BR}}_{g_0,\psi}(g)\,dm\,dz.
 \end{align*}
 From \eqref{KAN}, we obtain
 \begin{align*}\int_G &f\big(ga_{{t_0}}mn_z\big) \,d\widetilde{m}^{\op{BR}}_{g_0,\psi}(g)=\int_{KA{\check N}}f\big(kau a_{{t_0}}mn_z\big) e^{-\psi(\log a)} du\,da\,d\cl{\nu}_\psi^{g_0}(k)\\
 &=e^{-2\sigma({t_0})}\int_{KA{\check N}}f\big(kaa_{{t_0}}u mn_z\big) e^{-\psi (\log a)} du\,da\,d\cl{\nu}_\psi^{g_0}(k)
 \\
&=e^{(\psi-2\sigma)({t_0})}\int_{G}f\big(g mn_z\big) \,d\widetilde{m}^{\op{BR}}_{g_0,\psi}(g).
 \end{align*}
This gives
$$\int_{-L} \widetilde{m}^{\op{BR}}_{g_0,\psi}\big((g_0\cdot f)_z\big)\,dz=e^{\psi({t_0})}\int_{-g_0^{-1}{\;} L} \widetilde{m}^{\op{BR}}_{g_0,\psi}\big( f_z\big)\,dz,   $$ 
proving the claim.
 \end{proof}

 \begin{proof}[Proof of Proposition \ref{prop.C} for the general case]
We define a counting function {that we again denote by} $F_R$, on $\Ga_\rho^{g_0}\ba G$ by 
$$ F_R([g])=\sum_{\ga\in(\Ga_\rho^{g_0} \cap H)\ba\Ga_\rho^{g_0} }\mathbbm{1}_{
{B}^0_{\psi}
(E,R)}(\ga g);$$
We have:
\begin{equation}\label{eq.e1}
    \#\left( [e]\Gamma^{g_0} \cap 
{B}_\psi^0
    (E,R)g_0 \right) =(g_0^{-1}\cdot F_R)([e]). 
\end{equation}
Let $B_\psi^{\e}(E,R)^{\pm}$ be as in \eqref{eq.bbb} and $\cal O_\e$ be as in Lemma \ref{KANstab}.
Set {now}
\begin{equation}\label{Fpm2} F_R^{\e,\pm}([g]) := \sum_{\gamma\in (\Ga_\rho^{g_0} \cap H)\ba \Ga_\rho^{g_0} } \mathbbm{1}_{ {B}_\psi^{\e}(E,R\pm\e)^{\pm}}(\ga g).\end{equation}
Then
$$ (g_0^{-1}\cdot F_R^{\e,-})([g])\leq (g_0^{-1}\cdot F_R)([e]) \leq (g_0^{-1}\cdot F_R^{\e,+})([g])$$
for all $g\in\scrO_{\e}$.
Thus, choosing a non-negative $\phi^{\e}\in C_c({\Ga_\rho^{g_0}\ba G})$ {with support in $[e]\cal O_\e$} such that $
{
\int
}
\phi^{\e}([g])\,d
{[g]}
=1$ gives
\begin{equation}\label{eq.e2}
     \langle F_R^{\e,-},g_0\cdot\phi^{\e}
     { \rangle}
     \leq F_R([g_0]
) \leq \langle F_R^{\e,+},g_0\cdot\phi^{\e}
     { \rangle}.
\end{equation}
Similarly as in Proposition \ref{equiint}, we have
\begin{equation}\label{equiint2}\langle F_R^{\e,\pm},g_0\cdot\phi^{\e}
     { \rangle}
     \sim c{\;}  e^{R\pm \e}\int_{-E_{\e}^{\pm}} {m}^{\op{BR}}_{g_0,\psi}\big((g_0\cdot\phi^{\e})_z\big)\,dz, 
\end{equation}
as $R\rightarrow\infty$, where $c=c_{\Ga_\rho^{g_0},\psi}$.

Similarly to
 the proof for the special case,
 from {Corollary \ref{cor.C},}
 Proposition \ref{BETg}, 
 \eqref{eq.e1}, \eqref{eq.e2} and \eqref{equiint2},
 we obtain {for $\e_0=(\ell'+2)\e$,}
\begin{align*}
&c{\;}  e^{-{\e_0}}\int_{-E_{{\e_0}}^{-}} m^{\BR}_{g_0,\psi }\big((g_0\cdot\phi^{{\e_0}})_z\big)\,dz \leq\liminf_{R\rightarrow\infty}e^{-R} N_R(\scrP,{\psi,}E)\\&\leq \limsup_{R\rightarrow\infty} e^{-R}N_R(\scrP,{\psi,}E)\leq c {\;} e^{-{\e_0}}\int_{-E_{{\e_0}}^{+}} m^{\BR}_{g_0,\psi}\big((g_0\cdot\phi^{{\e_0}})_z\big)\,dz.
\end{align*}

Applying Lemma \ref{BRconj1} and Corollary \ref{phibdd} gives
\begin{align*}e^{\psi({t_0})}&( 1-C{\e_0})\,\omega_{g_0,\psi}\big(g_0^{-1}{\;} E^-_{(
2
+c){\e_0}}\big)\leq \int_{
-E_{{\e_0}}^{\pm}
} {m}^{\op{BR}}_{g_0,\psi}\big((g_0\cdot\phi^{{\e_0}})_z\big)\,dz
\\&\leq e^{\psi({t_0})}(1+C{\e_0})\,\omega_{g_0,\psi}\big(g_0^{-1}{\;} E^+_{(
2
+c){\e_0}}\big). \end{align*}
We now use Lemma \ref{omegg0} to change 
$\omega_{g_0,\psi}$
to $\omega_{\psi}$ and then taking $\epsilon\rightarrow 0$ as in the case $T=T_0$ gives
$$ \lim_{R\rightarrow \infty} e^{-R} N_R(\scrP,\psi,E) =c_0 {\;} \omega_{\psi}(E)$$
for some positive constant $c_0>0$. 
Since the left-hand side of the above equation does not depend on the choice of $g_0$, we in fact have that $c_0$ cannot depend on $g_0$ either, proving the theorem.
 \end{proof}

\subsection*{On Remark \ref{Euc}(2)}\label{Euc2}
There exists a unique vector $u=u_{\Ga_\rho}\in { (\br_{\ge 0})}^d$ such that
$\Phi_\rho(u)=\max\{\Phi_\rho(v)\;:\; \|v\|\le 1\}$, called the direction of the maximal growth of $\Ga_\rho$. Moreover,  $u\in \inte \L_\rho$ (\cite{Sa}, \cite{PS}). 
Let $\psi=\psi_u$ be as defined in Lemma \ref{lem.1-1}. Then
 for all $w\in \ker \psi$, the subset
\be\label{qrw2}
Q_R(w):= \lbrace s\in \RR_{>0}\,:\, \|su+\sqrt s\vecw\| < R \rbrace \ee 
is an interval of the form
$(0,\sfrac{1}{2}\big(-\|\vecw\|^2+\sqrt{\|\vecw\|^4+4R^2}\big))$.
Then using \cite[Lem. 9.4]{ELO} substituting Lemma \ref{lem.ab},
our proof yields the following:
\begin{equation}\label{eq.E3}
 \lim_{R\to \infty} e^{-\delta_{\Ga_\rho}{R}} {\#\{T\in \cal P: \|\vecv(T)\|  <R, \, T\cap E\ne \emptyset\} } =
c_{\psi}{'}{\;} \omega_\psi  (E\cap \Lambda_\rho)
\end{equation}
where $\delta_{\Ga_\rho}=\Phi_\rho (u)$ and $c_\psi{'}>0$.

We remark that whereas Lemma \ref{lem.ab} relied on the uniformity in the mixing Theorem \ref{m11}, \cite[Lem. 9.4]{ELO} did not need such uniformity.
The reason is that for each $w\in\op{ker}\psi$, the amount of time the trajectory $s\mapsto su+\sqrt sw$ spends in the set $\{v\in\frak a^+:\norm{v}<R\}$ is much less than the time it spends in $\{v\in\frak a^+:\psi(v)<R\}$ to the extent that when viewed from a proper scale, it gives rise to an $L^1$-function on $\op{ker}\psi$.

 \section{PS-measures are null on algebraic varieties}\label{sec:f}
In this section, we prove that $\nu_\psi(S)=0$ for any proper real algebraic subvariety $S$ of $\hat{\bb C}^d$.
Since $\om_{\psi}$ is absolutely continuous with respect to $\nu_\psi$, it follows that $
\om_\psi
(\partial E)=0$ whenever $E$ has boundary contained in a proper real algebraic subvariety, and in particular, Lemma \ref{lem.F} follows.

We will in fact prove this in a more general setup, which we now explain.

Let $G$ be any connected semisimple linear real algebraic group. Let $P=MAN<G$ be a minimal parabolic subgroup with a fixed Langlands decomposition. Let $\fa$ denote the Lie algebra of $A$.
Let $\i$ denote the opposition involution on $\fa$.
We remark that 
the opposition involution is non-trivial if and only if
$G$ has a simple factor of type $A_n$ ($n\ge 2$), $D_{2n+1}$ ($n\ge 2$) and $E_6$ \cite[1.5.1]{Ti}. 
For instance, when $G$ is a product of rank one groups, $\i$ is trivial.

A Borel probability measure $\nu$ on $\cal F=G/P$ is called a $(\Gamma,\psi)
$-conformal measure for a linear form $\psi\in \fa^*$ if 
for all $\ga\in \Ga$ and $\xi\in \cal F$,
$$\frac{d\ga_*\nu}{d\nu}(\xi)= e^{-\psi(\beta_{\xi}(\ga , e))},$$
where $\beta$ denotes the $\fa$-valued Busemann map \cite[Def. 2.3]{ELO}. When supported on the limit set $\La$, it is called a $(\Ga, \psi)$-PS-measure.

We recall the following result:
consider the diagonal action of $\G$ on $\cal F\times\cal F$.

\begin{prop} \cite[Prop. 6.3]{LO2} \label{prop.Z}
Let $\G<G$ be a \Zar Anosov subgroup of $G$ (with respect to $P$). Let $\psi\in \fa^*$ be a  linear form.
Let $\nu$ and $\nu_{\i}$ be  respectively
 $(\Ga,\psi)$ and $(\Ga, \psi\circ \i)$-PS measures. Then  $(\F\times \F,\nu\times \nu_{\i}, )$ is $\Ga$-ergodic. 
\end{prop}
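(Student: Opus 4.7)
The plan is to transfer $\Ga$-ergodicity on $\cal F \times \cal F$ to $A$-ergodicity of an associated Bowen-Margulis-Sullivan measure on $\Ga \ba G/M$. As a preliminary reduction, the antipodality property of Anosov subgroups implies that $\nu \times \nu_\i$ is concentrated on the open $G$-orbit $\FF \subset \cal F \times \cal F$, so it suffices to show that $(\FF, \nu \times \nu_\i)$ is $\Ga$-ergodic.

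Using the Hopf parametrization $G/M \simeq \FF \times \fa$ given by $gM \mapsto (g^+, g^-, \beta_{g^-}(e, g))$, I would define a locally finite Borel measure on $G/M$ of the form
$$
d\tilde m = e^{\psi(\beta_{g^+}(e, g)) + (\psi \circ \i)(\beta_{g^-}(e, g))} \, d\nu(g^+) \, d\nu_\i(g^-) \, db,
$$
in exact analogy with \eqref{eq.BMS0} but using $\nu_\i$ in place of the $K$-invariant measure $m_o$ in the $g^-$-coordinate. The $(\Ga, \psi)$- and $(\Ga, \psi \circ \i)$-conformality of $\nu$ and $\nu_\i$, combined with the cocycle identity for the $\fa$-valued Busemann map, implies that $\tilde m$ is left $\Ga$-invariant and therefore descends to an $A$-invariant locally finite measure $m$ on $\Ga \ba G/M$.

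I would then invoke the higher-rank mixing result for BMS measures of Zariski dense Anosov subgroups (the same machinery underlying Theorem \ref{prop.mixH}), which in particular yields $A$-ergodicity of $(\Ga \ba G/M, m)$. From here the transfer to $\nu \times \nu_\i$-ergodicity is standard: any $\Ga$-invariant Borel subset $B \subset \FF$ corresponds under Hopf coordinates to a $\Ga$-invariant and $A$-invariant subset $B \times \fa$, whose image in $\Ga \ba G/M$ is $A$-invariant. Ergodicity of $m$ then forces $(\nu \times \nu_\i)(B) \in \{0, 1\}$, completing the proof.

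The main obstacle is the $A$-ergodicity step. Because $A$ is multidimensional, this is substantially subtler than the rank one case, where ergodicity of a single geodesic flow suffices. One must exploit mixing of the Weyl chamber flow along a direction in the interior of the limit cone $\cal L_\Ga$ (which is non-empty by the Anosov hypothesis together with Zariski density) and verify conservativity of $m$ for the full $A$-action; both points rest on the higher-rank Patterson-Sullivan theory developed in \cite{ELO2} and \cite{CS}.
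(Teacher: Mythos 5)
Note first that this paper does not actually prove Proposition \ref{prop.Z}: it is quoted from \cite[Prop. 6.3]{LO2}, so your argument can only be compared with that source. Your preliminary steps are fine: discarding the diagonal (antipodality together with non-atomicity of the PS measures), building the measure $\tilde m$ on $G/M$ in Hopf coordinates from the pair $(\nu,\nu_\i)$, checking its left $\Ga$-invariance and right $A$-invariance via the cocycle identity, and the dictionary identifying $\Ga$-invariant subsets of $(\FF,\nu\times\nu_\i)$ with $A$-invariant subsets of $(\Ga\ba G/M,m)$. The genuine gap is the step that carries all the weight: ``invoke the higher-rank mixing result \dots which in particular yields $A$-ergodicity.'' For $d\ge 2$ the measure $m$ is infinite, and the results of \cite{ELO2} and \cite{CS} (Theorem \ref{prop.mixH}) are \emph{local} mixing theorems for one-parameter directional flows; in infinite measure such statements do not imply ergodicity --- one needs conservativity and then a Hopf-type ratio argument, neither of which you supply. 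Moreover the implication cannot be repaired along these lines: for Anosov subgroups of sufficiently high rank the one-parameter directional flows $\{\exp(tu)\}$ are known to fail conservativity and ergodicity with respect to the BMS measure, even though local mixing holds in every rank, so ``mixing of a directional flow $\Rightarrow$ ergodicity of that flow $\Rightarrow$ ergodicity of $A$'' is not a valid route. Finally, by the very dictionary you set up, ergodicity of the full $A$-action on $(\Ga\ba G/M,m)$ is \emph{equivalent} to the $\Ga$-ergodicity of $\nu\times\nu_\i$ being proved, so as written the crucial step essentially assumes the conclusion.

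Two further points. The mixing machinery you cite is only available for the measures attached to $\Ga$-critical (tangent) linear forms, whereas the proposition allows an arbitrary $\psi$ admitting a PS measure; to place yourself in that framework you would first need the classification --- one of the main points of \cite{LO2} --- that for Anosov groups PS measures exist only for such forms. And the cited source does not argue via mixing at all: it proves a higher-rank Hopf--Tsuji--Sullivan-type statement, using the shadow lemma and the (uniform) conicality of limit points (cf.\ Lemma \ref{lem.con}) to establish complete conservativity of the $\Ga$-action on $(\FF,\nu\times\nu_\i)$ and then ergodicity by a Hopf/transitivity-type argument. To turn your outline into a proof you would have to replace the appeal to mixing by an argument of this kind for the full $A$-action.
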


\begin{thm}\label{co1} Let $\G<G$ be a \Zar Anosov subgroup of $G$.
 For any $(\Ga,\psi)$-PS measure $\nu$ for some $\psi\in \fa^*$ with $\psi\circ \i =\psi $,
we have $$\nu(S)=0$$ 
for any proper real algebraic subvariety $S$ of $\cal F$.
\end{thm}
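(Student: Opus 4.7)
The plan is to assume $\nu(S)>0$ for some proper real algebraic subvariety $S\subset\cal F$ and derive a contradiction via a minimal-dimension argument driven by the ergodicity in Proposition~\ref{prop.Z}. Since $\psi=\psi\circ\i$, the measure $\nu_\i$ is itself a $(\Gamma,\psi)$-PS measure, so by uniqueness of PS measures for Zariski dense Anosov subgroups (the analogue of Lemma~\ref{lem.1-1}) we have $\nu_\i=\nu$; Proposition~\ref{prop.Z} then gives $\Gamma$-ergodicity of $(\cal F\times\cal F,\nu\times\nu)$, and consequently (applied to sets of the form $A\times A$) of $(\cal F,\nu)$. I will also use two standard inputs: $\Lambda$ is Zariski dense in $\cal F$ (the Zariski closure of a $\Gamma$-invariant closed subset is $G$-invariant, hence all of $\cal F$ by transitivity), and $\supp\nu=\Lambda$ (Anosov subgroups act minimally on their limit set).

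Next I would set up the minimal-dimension machine. Let $d_0$ be the least dimension of an irreducible real algebraic subvariety of $\cal F$ with positive $\nu$-measure, and let $\cal V$ be the collection of all such subvarieties, so $0\le d_0<\dim\cal F$. For distinct $V,V'\in\cal V$, irreducibility forces $V\cap V'$ to be a proper closed subset of each, hence of dimension $<d_0$, so by minimality of $d_0$ one has $\nu(V\cap V')=0$. Thus $\sum_{V\in\cal V}\nu(V)\le 1$, making $\cal V$ countable, and $\Gamma$ permutes $\cal V$ since polynomial automorphisms preserve irreducibility and dimension while $\nu$ is quasi-invariant.

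The heart of the argument is a short double count. The set $\bigcup_{V\in\cal V}V$ is $\Gamma$-invariant with positive $\nu$-measure, so ergodicity on $(\cal F,\nu)$ gives $\sum_{V\in\cal V}\nu(V)=1$; likewise $W:=\bigcup_{V\in\cal V}V\times V$ is $\Gamma$-invariant with positive $\nu\times\nu$-measure, so ergodicity on $(\cal F\times\cal F,\nu\times\nu)$ gives $\sum_{V\in\cal V}\nu(V)^2=1$. Then
\begin{equation*}
1\;=\;\sum_{V\in\cal V}\nu(V)^2\;\le\;\Bigl(\max_{V\in\cal V}\nu(V)\Bigr)\sum_{V\in\cal V}\nu(V)\;=\;\max_{V\in\cal V}\nu(V)
\end{equation*}
produces some $V_*\in\cal V$ with $\nu(V_*)=1$. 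Since $V_*$ is closed and $\supp\nu=\Lambda$, this forces $\Lambda\subseteq V_*$, contradicting the Zariski density of $\Lambda$ in $\cal F$.

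The main obstacle I anticipate is verifying the two standard inputs (Zariski density of $\Lambda$ and $\supp\nu=\Lambda$) cleanly in the full real algebraic generality, together with handling irreducibility and dimension for real (as opposed to complex) algebraic subvarieties; decomposing a real algebraic set into its finitely many real irreducible components is standard but deserves attention. Once these are in place, the ergodic double count above is essentially formal.
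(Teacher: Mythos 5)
Your argument is correct, and it rests on the same two pillars as the paper's proof: the ergodicity of $\nu\times\nu$ supplied by Proposition~\ref{prop.Z} (valid here because $\psi\circ\i=\psi$ makes $\nu$ itself a $(\Ga,\psi\circ\i)$-PS measure, so the appeal to uniqueness is unnecessary, though harmless), and the mechanism ``irreducible of minimal dimension with positive measure, so distinct translates/intersections are null.'' Where you diverge is the endgame. The paper intersects $S\times\ga_0 S$ with a translate $\ga(S\times S)$, which by minimality and irreducibility forces $S=\ga S=\ga_0 S$ for every $\ga_0$, i.e.\ $\Ga S=S$; Zariski density of $\Ga$ plus transitivity of $G$ on $\cal F$ then kills the proper invariant subvariety. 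You instead form the countable family $\cal V$ of all minimal-dimension irreducible subvarieties of positive measure, run the double count $\sum\nu(V)=1$ and $\sum\nu(V)^2=1$ using ergodicity on both $\cal F$ and $\cal F\times\cal F$, and extract a single $V_*$ of full measure, contradicting the Zariski density of $\supp\nu=\La$. Both routes work; the paper's avoids the countability/summation bookkeeping and needs only Zariski density of $\Ga$, while yours avoids the translate-intersection trick at the cost of the extra inputs you flag ($\supp\nu=\La$, or merely that $\supp\nu$ is Zariski dense, which follows from the same stabilizer argument you give for $\La$), single-factor ergodicity (correctly deduced from the product), and two small points worth writing out: the null intersections $\nu(V\cap V')=0$ go through the finitely many irreducible components of $V\cap V'$, each of dimension $<d_0$; and the maximum in your displayed inequality is attained because $\sum_V\nu(V)=1$ forces only finitely many $V$ with $\nu(V)\ge\e$ for each $\e>0$. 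With those details spelled out, your proof is complete.
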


Theorem \ref{co1} follows from the following  by Proposition \ref{prop.Z}.
\begin{thm}\label{mg}
 Let $\G<G$ be a discrete subgroup and  $\nu$ be a
 $(\Ga,\psi)$-PS measure for some $\psi\in \fa^*$ such that the diagonal $\Ga$-action on $(\F\times \F, \nu\times \nu )$  is ergodic.
 Then  $$\nu(S)=0$$ 
for any proper real algebraic subvariety $S$ of $\cal F$.
\end{thm}
\begin{proof}
Suppose the theorem is false.
Let $S$ be a proper subvariety of $\F$ with $\nu(S)>0$
and of minimal dimension. 
We may assume without loss of generality that $S$ is irreducible.

Since $(\nu\times \nu) (S\times  S)=\nu(S) \times \nu( S)>0$  the $\G$-ergodicity of $\nu\times \nu$ implies that
 $\Ga(S\times S)$ must have full $\nu\times \nu$-measure.
 Since for any $\ga_0\in \Ga$,
 $(\nu\times \nu)(S\times \ga_0 S)>0$,
 there must exist $\ga\in \Ga$ such that
 $(S\cap \ga_0S)\cap (\ga S\times \ga S)$ has positive $\nu\times \nu$-measure. This implies that $\nu(S\cap \ga S)>0$ and
$\nu(\ga_0 S\cap \ga S)>0$. Since $S$ is an irreducible variety, for any $\ga\in \G$,
either $S=\ga S$ or
the dimension of $S\cap \ga S$ is strictly smaller than that of $S$, and hence $\nu(S\cap \ga S)=0$. Therefore $S=\ga S=\ga_0 S$.
Since $\ga_0$ was arbitrary, it follows that $\Gamma S=S$; a contradiction to the { Zariski-density of $\Ga$}.
\end{proof}

We deduce the following corollary when $G$ is of rank one. In this case,
$G=\op{Isom}^+(X)$ for a rank one symmetric space $X$ and $\F$ is equal to the geometric boundary of $X$. For a non-elementary discrete subgroup $\Ga<G$ of divergence type
(e.g., geometrically finite),
there exists a unique $\Ga$-conformal measure, say $\nu_\Ga$,
 of dimension equal to the critical exponent
$\delta_\Ga$ and the diagonal $\Ga$-action on $(\F\times \F, \nu_\Ga\times \nu_\Ga)$ is ergodic
\cite[Thm. 1.7]{Rob}. Therefore we obtain
 \begin{cor} Let $G$ be of rank one  and $\G<G$ be a  \Zar discrete
 subgroup of divergence type. Then $\nu_\Ga(S)=0$ for any proper real algebraic subvariety $S$ of $\F$.
     
 \end{cor}
 This corollary was obtained in \cite{FSp} when $G=\op{SO}(n,1)$ and $\Ga<G$ is geometrically finite.

\end{document}